\documentclass[11pt]{article}

\usepackage{amsmath, amssymb, amsthm, verbatim,enumerate,bbm,color} 
\numberwithin{equation}{section}
\usepackage{float}

\usepackage{mathrsfs}

\usepackage{tikz}
\usetikzlibrary{calc}
\usetikzlibrary{math}
\usetikzlibrary{decorations,decorations.markings,decorations.pathreplacing,decorations.pathmorphing}
\usetikzlibrary{fadings}
\usetikzlibrary{fit}
\usetikzlibrary{matrix}
\usetikzlibrary{patterns}
\usetikzlibrary{positioning}
\usetikzlibrary{shadows}
\usetikzlibrary{shapes,shapes.geometric}
\usetikzlibrary{trees}
\usetikzlibrary{overlay-beamer-styles}
\usepackage{tikzsymbols}

\usepackage{dsfont}

\usepackage[colorlinks,citecolor=blue,bookmarks=true]{hyperref}
\usepackage{cleveref}

\date{\today}
\allowdisplaybreaks

\theoremstyle{plain}
\newtheorem{theorem}{Theorem}[section]
\newtheorem{lemma}[theorem]{Lemma}
\newtheorem{claim}[theorem]{Claim}

\newtheorem{problem}[theorem]{Problem}

\newtheorem{definition}[theorem]{Definition}

\newcommand{\Bin}{\ensuremath{\textrm{Bin}}}

\newcommand{\disc}{\ensuremath{\textrm{disc}}}

\makeatletter
\def\moverlay{\mathpalette\mov@rlay}
\def\mov@rlay#1#2{\leavevmode\vtop{%
		\baselineskip\z@skip \lineskiplimit-\maxdimen
		\ialign{\hfil$\m@th#1##$\hfil\cr#2\crcr}}}
\newcommand{\charfusion}[3][\mathord]{
	#1{\ifx#1\mathop\vphantom{#2}\fi
		\mathpalette\mov@rlay{#2\cr#3}
	}
	\ifx#1\mathop\expandafter\displaylimits\fi}
\makeatother

\makeatletter
\renewenvironment{proof}[1][\proofname]
{\par\pushQED{\qed}
	\normalfont\topsep6\p@\@plus6\p@\relax\trivlist
	\item[\hskip\labelsep\bfseries#1\@addpunct{.}]
	\ignorespaces}
{\popQED\endtrivlist\@endpefalse}

\makeatother

\newcommand{\eps}{\varepsilon}

\RequirePackage{color}\definecolor{RED}{rgb}{1,0,0}\definecolor{BLUE}{rgb}{0,0,1} 

\title{Subgraph discrepancies in the complete graph}

\author{
Micha Christoph\thanks{Department of Mathematics, ETH Z\"urich, Switzerland. \emph{Email}: \href{micha.christoph@math.ethz.ch.}{\tt micha.christoph@math.ethz.ch}. Research supported by the SNSF Ambizione Grant No. 216071.}
\and 
Lior Gishboliner\thanks{Department of Mathematics, University of Toronto, Canada. \emph{Email}: \href{mailto:lior.gishboliner@utoronto.ca}{\tt lior.gishboliner@utoronto.ca}.
Research supported in part by the NSERC Discovery Grant ``Problems in Extremal and Probabilistic Combinatorics".
}
\and 
Michael Krivelevich\thanks{Department of Mathematics, Tel Aviv University, Israel. \emph{Email}: \href{krivelev@tauex.tau.ac.il }{\tt krivelev@tauex.tau.ac.il}. Research supported in part by NSF-BSF grant 2023688.}
}

\date{}

\begin{document}

\maketitle

\begin{abstract}
    Given a 2-edge-coloring 
    $f : E(K_n) \rightarrow \{\pm 1\}$, the {\em discrepancy} of a subgraph $F \subseteq K_n$ is defined as $\left| \sum_{e \in E(F)} f(e) \right|$. Erd\H{o}s, F\"uredi, Loebl and S\'os showed that if $F$ is an $n$-vertex tree with maximum degree at most 
    $(1-\varepsilon)n$, then every 2-coloring of $K_n$ has a copy of $F$ with discrepancy $\Omega(\varepsilon)n$. We extend this result by showing that the same conclusion holds for every $n$-vertex graph with maximum degree at most $(1-\varepsilon)n$ and no isolated vertices. We also show that for every $d$-regular $n$-vertex graph $F$ with $d \leq (1-\varepsilon)n$, every 2-coloring of $K_n$ has a copy of $F$ with discrepancy $\Omega(\sqrt{\varepsilon d}) \cdot n$. The dependence on $d$ and $n$ is best possible.

    Finally, we consider specific graphs $F$, namely $K_r$-factors and 2-factors. For each such graph $F$, we determine the optimal constant $\lambda$ such that every 2-coloring of $K_n$ has a copy of $F$ with discrepancy at least $(\lambda + o(1))n$. 
\end{abstract}

\section{Introduction}\label{sec:introduction}

Combinatorial discrepancy is a classical topic in discrete mathematics, dealing with balanced colorings of hypergraphs. More precisely, for a hypergraph $H$ and a 2-coloring $f : V(H) \rightarrow \{\pm 1\}$, we define $D(f) = \max_{e \in E(H)} |\sum_{x \in e} f(x)|$. The {\em discrepancy} of $H$ is then defined as $\min_{f} D(f)$. In other words, the discrepancy is the maximum imbalance (on some edge) which is guaranteed to exist in every 2-coloring of $V(H)$. 

A widely studied variant of the above setting is when the hypergraph $H$ corresponds to a set of subgraphs of a graph. Namely, $V(H)$ is the set of {\em edges} of a graph $G$, and $E(H)$ is the set of subgraphs of $G$ of a given type, e.g., all subgraphs of $G$ isomorphic to a given graph $F$. Two early notable works on problems of this type are that of Erd\H{o}s and Spencer \cite{ES:72}, who studied the case of cliques, and that of Erd\H{o}s, F\"uredi, Loebl and S\'os \cite{EFLS:95}, who studied spanning trees. We shall return to the main result of \cite{EFLS:95} shortly. 

In recent years, graph discrepancy problems have witnessed renewed interest, following the papers of Balogh, Csaba, Jing and Pluh\'ar \cite{BCJP:20} and
Balogh, Csaba, Pluh\'ar and Treglown \cite{BCPT:21}. By now, problems of this type have been studied for a variety of structures, such as perfect matchings and Hamilton cycles~\cite{BCJP:20, BCL, FHLT:21,GKM_Hamilton, GKM_trees}, spanning trees~\cite{GKM_trees,HLMP}, $H$-factors~\cite{BCPT:21,BCG:23}, powers of Hamilton cycles~\cite{Bradac:22}, 1-factorizations \cite{AHIL}, and general bounded-degree graphs \cite{BPPR,HLMP}. Recently, analogous problems for uniform hypergraphs have also been studied \cite{BTZ-G:24,GGS_Hamilton,GGS_STS,HLMPSTZ24+,LMX24+}.

In this paper we study the discrepancy problem for general spanning guest graphs $F$. 
Repeating the definition of discrepancy for our setting, 
for a coloring $f : E(K_n) \rightarrow \{\pm 1\}$, the discrepancy of a subgraph $F$ of $K_n$ is 
$|\sum_{e \in E(F)}f(e)|$. In other words, $F$ has discrepancy at least $t$ if one of the two colors appears on at least $\frac{e(F) + t}{2}$ edges. 
Our starting point is the result of Erd\H{o}s, F\"uredi, Loebl and S\'os \cite{EFLS:95}, who proved that for every $n$-vertex tree $T$ with maximum degree at most $(1-\varepsilon)n$, every 2-coloring of $K_n$ has a copy of $F$ with discrepancy 
$\Omega(\varepsilon n)$. This was recently extended to the multicolor setting in \cite{HLMP}. Here we generalize this result to hold for any graph $F$ with maximum degree at most $(1-\varepsilon)n$ and no isolated vertices.
\begin{theorem}\label{thm:max degree}
There exists an absolute constant $c > 0$ such that the following holds. For every $\varepsilon > 0$, every large enough $n$ and every $n$-vertex graph $F$ with $\Delta(F) \leq (1-\varepsilon)n$ and no isolated vertices, every 2-coloring of $K_n$ has a copy of $F$ with discrepancy at least $c \cdot \varepsilon n$. 
\end{theorem}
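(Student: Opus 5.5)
The plan is to argue by contradiction. Suppose $f$ is a 2-coloring of $K_n$ in which every copy of $F$ has discrepancy less than $c\varepsilon n$. Then any two copies of $F$ have signed sums $\sum_{e} f(e)$ differing by less than $2c\varepsilon n$. We will exploit this with local modifications of an embedding $\varphi : V(F) \to V(K_n)$.

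\textbf{Step 1: the swap identity.} For distinct $x,y \in V(K_n)$, write $g_{xy}(z) = f(yz) - f(xz)$. Let $a = \varphi^{-1}(x)$ and $b = \varphi^{-1}(y)$, and let $\varphi'$ be obtained by exchanging $x$ and $y$. Then the change in signed sum is
\[
\sum_{w \in N_F(a)\setminus\{b\}} g_{xy}(\varphi(w)) \;-\; \sum_{w \in N_F(b)\setminus\{a\}} g_{xy}(\varphi(w)),
\]
and its absolute value is at most $2c\varepsilon n$.

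\textbf{Step 2: choose the embedding to maximise one swap.} Pick $a$ of maximum degree $\Delta$ and any $b$ with $\deg_F(b)\ge 1$; here we use that $F$ has no isolated vertices. Given $x,y$, we are free to choose $\varphi$ on $V(F)\setminus\{a,b\}$. We map $N_F(a)\setminus N_F(b)$ onto the vertices $z$ with the largest values of $g_{xy}(z)$, and $N_F(b)\setminus N_F(a)$ onto those with the smallest values. The $\ge \varepsilon n - 1$ non-neighbours of $a$ give slack: the vertices of largest $g_{xy}$ can be covered by $N(a)$ while the most negative ones are left out. Writing $P_{xy}$ and $M_{xy}$ for the numbers of $z$ with $g_{xy}(z)=2$ and $g_{xy}(z)=-2$ respectively, the aim is the bound
\[
\min\{P_{xy}, M_{xy}\} \le C c\, n \quad\text{or}\quad \max\{P_{xy}, M_{xy}\} \le C c\, n \qquad \text{for every pair } x,y .
\]
The extreme situations need care: when $\Delta$ is small we instead use $b$ together with a vertex of intermediate degree, and we also compare $\varphi$ with the embedding in which the roles of $x$ and $y$ inside $N(a)$ are reversed.

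\textbf{Step 3: structure of $f$.} The conclusion of Step 2 says that for all $x,y$, the red neighbourhoods $R(x)$ and $R(y)$ are nearly nested, up to $O(cn)$ vertices. A standard argument then shows that the vertices can be ordered so that $f$ is $O(c)n^2$-close to a ``threshold'' coloring $f(xz) = \mathrm{sign}(h(x)+h(z))$ for some $h$. Closeness alone is too weak for a linear bound, so we keep the per-vertex version: every vertex has only $O(cn)$ edges deviating from the threshold rule.

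\textbf{Step 4: threshold colorings.} For an (approximate) threshold coloring we exhibit two explicit copies of $F$ whose signed sums differ by $\Omega(\varepsilon n)$. Order the vertices of $K_n$ by $h$. In the first copy, place high-degree vertices of $F$ at the top of the order; in the second, place them at the bottom, or reverse the whole order. Since $e(F) \ge n/2$ and $\Delta \le (1-\varepsilon)n$, each vertex of $F$ misses at least $\varepsilon n$ potential neighbours. From this we should be able to show, via a weighting or majorization argument, that these two placements change the number of red edges by $\Omega(\varepsilon n)$. This contradicts the assumption, provided $c$ is small compared with the implicit constants.

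I expect the main obstacle to be Steps 2--3: extracting linear-size, per-vertex structure from the swap inequalities when $F$ is an arbitrary graph rather than a tree. The degree sequence of $F$ may be very flat (for instance a perfect matching) or very skewed, and these regimes need different choices of the swap pair $a,b$. If the direct approach fails, I would first try splitting into the cases $\Delta(F) \le \varepsilon n$ and $\Delta(F) > \varepsilon n$. In the first case I would use swaps of whole edges of a matching in $F$, which exists in large size when $\Delta$ is small. In the second case I would use the star at a maximum-degree vertex, following the approach of Erd\H{o}s, F\"uredi, Loebl and S\'os.
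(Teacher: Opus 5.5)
There is a genuine gap. Your Steps 2--4 cannot work when $\Delta(F)$ is small, and that regime is the core of the theorem (take $F$ a perfect matching, or any bounded-degree graph). Exchanging the images of two vertices $a,b$ changes $\sum_e f(e)$ by at most $2(d_F(a)+d_F(b))\le 4\Delta(F)$. So once $\Delta(F)<c\varepsilon n/2$, every single-swap inequality holds trivially under your contradiction hypothesis. For instance, a random colouring satisfies all of them, yet its red neighbourhoods are far from nested. Hence the conclusion of Step~2 is simply not implied, and Steps~3--4 have nothing to start from.

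A linear gain then has to be assembled from $\Omega(n/\Delta(F))$ switches performed simultaneously. Your fallback sentence about ``swaps of whole edges of a matching'' does not supply the ingredients this needs:
\begin{enumerate}
\item[(a)] linearly many host pairs $x_i,x'_i$ with $\Omega(n)\le|N_G(x_i)\triangle N_G(x'_i)|\le(1-\Omega(1))n$;
\item[(b)] control of the edges of $F$ among the switched vertices, which can cancel the gain;
\item[(c)] an embedding of the neighbourhoods (random, or greedy with compensation for earlier pairs) under which the per-pair gains add up.
\end{enumerate}
The paper obtains (a) from a dichotomy. If the colouring has a bisection with $|e_G(X,Y)-\frac12|X||Y||\ge\gamma n^2$, it is paired with a bisection of $F$ of size at least $e(F)/2+n/6$ (Lee--Loh--Sudakov; this, not the choice of $b$, is one place where ``no isolated vertices'' genuinely matters), via a random embedding respecting both bisections. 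If all bisections are balanced, a switching argument produces the host pairs. For (b), the guest pairs are chosen inside an independent set: one of size $\lfloor n/2\rfloor$ if $\alpha(F)\ge n/2$; otherwise a maximal one, with pairs whose private neighbourhoods total at least $\delta n$ while the union of their neighbourhoods has size at most $10\delta n$. For (c), only the pairs whose gain has the favourable sign are switched.

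Even for $\Delta(F)\ge\delta n$, where a single swap does suffice, Step~2 aims at the wrong structure. Its displayed dichotomy is also redundant, since the second alternative implies the first.
\begin{itemize}
\item $b$ cannot be an arbitrary vertex with $d_F(b)\ge 1$: if $b$ is a twin of $a$, the swap changes nothing. You need an argument using $\Delta(F)\le(1-\varepsilon)n$ to find $b$ with $|N_F(a)\triangle N_F(b)|\ge\varepsilon n/2$.
\item Nested red neighbourhoods are no obstruction. If $R(x)\subseteq R(y)$ with $|R(y)\setminus R(x)|=n/3$, place the larger private neighbourhood of $a,b$ on $R(y)\setminus R(x)$ and the other on vertices with $g_{xy}=0$; this already gains $\Omega(\varepsilon n)$.
\end{itemize}
So the threshold detour of Steps~3--4 is unnecessary; moreover its per-vertex version is unproved and Step~4 is only asserted. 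It suffices to find one pair $x,y$ with $|R(x)\setminus R(y)|>0.01n$ and both one-sided differences below $0.99n$. Such a pair exists whenever both colours have density close to $1/2$, and otherwise a uniformly random embedding already gives linear discrepancy.
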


Theorem \ref{thm:max degree} is tight, even if the graph $F$ is required to have large {\em average} degree. Indeed, take $F$ to consist of a star with $(1-\varepsilon)n$ leaves, a clique of size $k$, and a path on the remaining $\varepsilon n - 1 - k$ vertices. Now color $K_n$ randomly with 2 colors. Consider any copy of $F$ in this coloring. Since every vertex in this coloring has degree $(\frac{1}{2} + o(1))n$ in each color, the star in $F$ has at least $(\frac{1-\varepsilon}{2} - o(1))n$ edges in each color. Also, using Hoeffding's inequality, it is easy to show that in a random 2-coloring, w.h.p.~every clique of size $k$ has discrepancy $O(k^{3/2}\log n)$. Thus, if 
$k \ll (n/\log n)^{2/3}$, then the $k$-clique in $F$ has discrepancy $o(n)$. It follows that every copy of $F$ has discrepancy $O(\varepsilon n)$. Reiterating, this works even if $F$ has average degree $\Theta_{\varepsilon}(n^{1/3}/\log^{4/3} n)$. We wonder whether the answer changes if the average degree of $F$ is larger than that.  

Next, we consider regular graphs $F$, and show that $d$-regular graphs always have discrepancy $\Omega(\sqrt{d}n)$.
\begin{theorem}\label{thm:d-regular}
There is an absolute constant $c > 0$ such that the following holds. Let $\varepsilon>0$, let $n$ be sufficiently large, and let $F$ be a $d$-regular $n$-vertex graph with $d \leq (1-\varepsilon)n$. Then in every 2-coloring of $K_n$, there is a copy of $F$ with discrepancy at least $c\sqrt{\varepsilon d}n$.
\end{theorem}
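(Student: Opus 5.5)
Our plan is to compare a uniformly random copy of $F$ with a random copy modified by a transposition. Write $f$ for the coloring. For a bijection $\sigma : V(F) \to V(K_n)$, put $S(\sigma) = \sum_{uv \in E(F)} f(\sigma(u)\sigma(v))$. If $|S(\sigma)| < t$ for every $\sigma$, with $t = c\sqrt{\varepsilon d}\,n$, then every difference satisfies $|S(\sigma) - S(\sigma')| < 2t$. We will contradict this by showing that $\mathbb{E}[(S(\sigma)-S(\tau\circ\sigma))^2] \gg t^2$, where $\sigma$ is uniform and $\tau$ is a uniformly random transposition $(x\,y)$ of $V(K_n)$. By averaging, some pair then differs by at least $2t$. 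We may assume $f$ is not nearly monochromatic. If one color has density at least $\frac12 + \delta$, then a uniformly random copy already has expected discrepancy $\ge 2\delta \cdot dn/2 = \delta dn$, and $\delta d n \ge \sqrt{d}\,n$ once $\delta \ge d^{-1/2}$. So only near-balanced colorings need the swap argument. Even so, it is cleaner to run the swap argument for all $f$.

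\textbf{Step 1: the effect of a swap.} Let $u = \sigma^{-1}(x)$ and $v = \sigma^{-1}(y)$, and write $a = \sigma(\cdot)$ on $N_F(u)\setminus\{v\}$ and $b = \sigma(\cdot)$ on $N_F(v)\setminus\{u\}$. The swap changes $S$ by
\[
\Delta = \sum_{w \in N_F(u)\setminus\{v\}} \bigl(f(y\sigma(w)) - f(x\sigma(w))\bigr) + \sum_{w \in N_F(v)\setminus\{u\}} \bigl(f(x\sigma(w)) - f(y\sigma(w))\bigr).
\]
The edge $uv$, if present, is unchanged. Equivalently, with $g_{xy}(z) = f(xz) - f(yz)$, we have $\Delta = \sum_{z \in B\setminus A} g_{xy}(z) - \sum_{z\in A\setminus B} g_{xy}(z)$. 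Here $A = \sigma(N_F(u))$ and $B = \sigma(N_F(v))$ are $d$-sets in the random embedding.

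\textbf{Step 2: variance lower bound.} Condition on $x, y, u, v$. The sets $A\setminus B$ and $B\setminus A$ are then random sets of sizes $d - |N(u)\cap N(v)|$ inside $V\setminus\{x,y\}$, roughly a random split of a random set. Conditioned on $A \triangle B$ as a set $C$ of size $2m$, the signed sum is a random $\pm$ balanced partition of $g$ over $C$. This is a sampling-without-replacement variance, so $\mathbb{E}\Delta^2 \gtrsim m \cdot \frac{1}{n}\sum_z g_{xy}(z)^2 \cdot (1 - 2m/n)$ up to constants. Two quantities then need lower bounds.
\begin{itemize}
\item[(i)] For a typical pair $x,y$, $\sum_z g_{xy}(z)^2 = 4|\{z : f(xz)\ne f(yz)\}|$ should be $\Omega(n)$. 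If not, most pairs of vertices have almost identical color neighbourhoods. Such $f$ is close to a coloring given by a vertex partition $V = P\cup Q$ with one color inside the parts and the other across, possibly with a part empty, i.e.\ monochromatic. These structured colorings must be handled directly: embed $F$ so that its edges go preferentially inside or across parts, e.g.\ by comparing a random embedding with one aligned to a max-cut or a min-cut of $F$. This produces discrepancy $\Omega(\varepsilon dn)$, which is more than needed.
\item[(ii)] $m = d - |N(u)\cap N(v)|$ should be $\Omega(\varepsilon d)$ for a constant fraction of pairs $u,v$. This is where $d \le (1-\varepsilon)n$ enters. We have $\sum_{u,v}|N(u)\cap N(v)| = n d^2$, so the average codegree is about $d^2/n \le (1-\varepsilon)d$. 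Hence the average of $m$ is at least $\varepsilon d$, and since $m \le d$, a constant fraction $\Omega(\varepsilon)$ of the pairs have $m \ge \varepsilon d/2$.
\end{itemize}
Combining these gives $\mathbb{E}\Delta^2 \gtrsim \varepsilon \cdot \varepsilon d \cdot n$, which is too weak by the factor $\varepsilon$ from the pair fraction. We expect to recover $\varepsilon d n$ more directly from $\mathbb{E}[m] \ge \varepsilon d$, since $\mathbb{E}\Delta^2 \gtrsim \mathbb{E}[m]\cdot n$ is linear in $m$. This gives $\mathbb{E}\Delta^2 \ge c'\varepsilon d n$.

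\textbf{Step 3: the main obstacle.} The bound $\varepsilon d n$ gives only discrepancy about $\sqrt{\varepsilon d n}$, not $\sqrt{\varepsilon d}\,n$. A single transposition is too weak, and we must swap about $n$ vertices at once. Using a random permutation is no better, since the fluctuations are then around the mean. Instead we would use a local-to-global argument. Consider $\Phi(\sigma) = S(\sigma)$ along the random walk on $S_n$ generated by transpositions. Every $S(\sigma)$ lies in $(-t, t)$, yet the increments are roughly uncorrelated with variance $\varepsilon d n$. After $N \approx n$ steps the walk should have spread by $\sqrt{N \varepsilon d n} = \sqrt{\varepsilon d}\,n$. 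To make this rigorous, we would use the spectral gap of the transposition walk on $S_n$, which is $2/(n-1)$ after the usual normalization. The Dirichlet form $\mathcal{E}(S) = \frac12\mathbb{E}\Delta^2 \gtrsim \varepsilon d n$ and the Poincaré inequality give only an upper bound on $\mathcal{E}$ in terms of the variance. We need the reverse direction, and this is the crux. We would therefore restrict to the projection of $S$ onto the degree-two (first nontrivial) representation component. That is the $\binom{n}{2}$-dimensional "edge" part $S = \sum_{e} f(e)\,\mathbf{1}[e \in \sigma(F)]$, on which the transposition walk's eigenvalue is explicit, with gap of order $1/n$ for the relevant isotypic pieces. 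For such a function, $\mathrm{Var}(S) \asymp n \cdot \mathcal{E}(S) \gtrsim \varepsilon d n^2$. Hence some $\sigma$ has $|S(\sigma) - \mathbb{E}S| \ge \sqrt{\varepsilon d}\,n$, and either $\sigma$ or a copy realizing the opposite sign (or the mean itself, if large) gives discrepancy $\Omega(\sqrt{\varepsilon d})\,n$. Verifying that $S$ lives in the low isotypic components, so that the eigenvalues are bounded away from $0$ by $\Omega(1/n)$, is the key technical step.
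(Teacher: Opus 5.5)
\textbf{There is a genuine gap, and it is in Step 2.} The estimate $\mathbb{E}\Delta^2\gtrsim\varepsilon dn$ is off by a factor of $n$. A swap only changes the images of the at most $2d$ edges at $u$ and $v$, each by at most $2$. So $|\Delta|\le 4d$ and $\mathbb{E}\Delta^2\le 16d^2$. Your own sampling bound confirms this: since $\frac1n\sum_z g_{xy}(z)^2\le 4$, it is of order $m\le d$, not $mn$.

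The step you flag as the crux is in fact true and easy. $S(\sigma)=\langle f,\rho(\sigma)\mathbf{1}_F\rangle$ is a matrix coefficient of the permutation representation on pairs. So $S$ lies in the $(n)\oplus(n-1,1)\oplus(n-2,2)$ isotypic components, where the random-transposition operator has eigenvalues $1$, $1-\frac{2}{n-1}$ and $1-\frac{4}{n}$. Hence $\frac n8\,\mathbb{E}\Delta^2\le\mathrm{Var}(S)\le\frac{n-1}{4}\,\mathbb{E}\Delta^2$.

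But this two-sided bound is exactly what defeats the approach. It gives $\mathrm{Var}(S)\le 4nd^2$, which is below the required $c^2\varepsilon dn^2$ as soon as $d<c^2\varepsilon n/4$. Even for linear $d$ it fails on the paper's own extremal coloring, where one color is a balanced complete bipartite graph. There a direct second-moment computation gives $\mathrm{Var}(S)=(1+o(1))\,d(n-d)/2\le dn$ for \emph{every} $d$-regular $F$. So the uniform embedding fluctuates by only $O(\sqrt{dn})$. The copies of discrepancy $\Theta(\sqrt d\,n)$ that the theorem promises sit about $\sqrt n$ standard deviations out in the tail. No second-moment argument over the uniform measure (single swap, random walk, Dirichlet form) can detect them. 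Your random-walk heuristic in Step 3 only looked right because of the spurious factor $n$: with increments of typical size $\sqrt d$, as in that example, $n$ uncorrelated steps give only $\sqrt{dn}$.

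\textbf{What is missing} is an outcome-dependent, non-uniform choice of embedding, so that $\Theta(n)$ local gains of size $\Omega(\sqrt d)$ add up coherently rather than cancel. The paper does this in two regimes.
\begin{enumerate}
\item If the coloring has a biased bisection, $|e_G(X,Y)-\frac12|X||Y||\ge\gamma n^2$, then Lemma~\ref{lem:cut discrepancy} aligns it with a bisection of $F$ of deviation $\Omega(\sqrt{\varepsilon d}\,n)$, supplied by Theorem~\ref{thm:bisection d-regular}.
\item Otherwise $G$ is host-$\beta$-good (Lemma~\ref{lem:host-good}) and $F$ is guest-good with value $\Omega(n\sqrt d)$ (Lemma~\ref{lem:guest-good d-regular}). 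In Lemma~\ref{lem:main}, one embeds $V\to Y$ at random and then swaps \emph{only} the pairs $u_i,u_i'$ whose gain satisfies $D_i\ge 0.1\rho\sqrt{d_i}$. The selected gains then sum to $\Omega(n\sqrt d)$. The side effect inside $U$ is controlled by the two-case comparison, using $|d_V(u_i)-d_V(u_i')|\le 20\sqrt{d_i}$ and $|d_Y(x_i)-d_Y(x_i')|\le\beta n$.
\end{enumerate}
This selection step is the nonlinearity that your variance method lacks.
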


The bound in Theorem \ref{thm:d-regular} is best possible. Indeed, it is well-known that for any $n,d$ with $nd$ even, there exists a $d$-regular $n$-vertex graph $F$ such that for every partition $V(F) = U \cup V$ with $|U| = \lfloor \frac{n}{2} \rfloor$, it holds that 
$\big| e(U,V) - \frac{e(F)}{2} \big| \leq O(\sqrt{d}n)$. For example, this holds w.h.p.~for a random $d$-regular graph, or more generally for any $(n,d,\lambda)$-graph with $\lambda = O(\sqrt{d})$ (by the expander mixing lemma); see \cite[Section 2.4]{KS_Survey}. Now color $K_n$ with 2 colors such that one of the colors forms a balanced complete bipartite graph. Then every copy of $F$ in this coloring has discrepancy $O(\sqrt{d}n)$.

Next, we consider special types of guest graphs of $F$: $K_k$-factors and 2-factors. 
In the following results, we switch away from the language of discrepancy and instead consider the size of the most popular color among the edges of $F$. This readily translates to a discrepancy result: If a copy of $F$ has at least $m$ edges in the most popular color, then this copy has discrepancy at least $2m-e(F)$. 

The minimum degree threshold for having $K_k$-factors with linear discrepancy was determined in \cite{BCPT:21}. Here we ask a different question: We consider the complete graph instead of graphs of large minimum degree, and we would like to determine (asymptotically) precisely the (minimum possible) discrepancy of $K_k$-factors in a coloring of $K_n$. 
The case $k=2$ (i.e., perfect matchings) was resolved in \cite{GKM_Hamilton}. Here we handle the general case. 
To state our result, we need the following definition. 
\begin{definition}[bipartite construction]\label{def:bipartite construction}
The {\em bipartite construction with ratio $\rho$} is the following coloring of $K_n$. Partition $V(K_n)$ into sets $X,Y$ with $|X| =\rho n$, and color all edges touching $X$ with one color and all edges inside $Y$ with the other color. 
\end{definition}
It turns out that the bipartite construction is extremal for $K_k$-factors. Formally,  
for $k \geq 2$, let $\lambda_k$ be the supremum of all $\lambda$ such that for every $n$ divisible by $k$ and for every $\rho \in [0,1]$, the $n$-vertex bipartite construction with ratio $\rho$ has a $K_k$-factor with at least $\lambda n$ edges of the same color. See Table \ref{fig:lambda_k} in Section \ref{subsec:K_k-factors} for some values of $\lambda_k$.
\begin{theorem}\label{thm: Kk-factors}
    For every $k \geq 2$ and every $n$ divisible by $k$, every 2-coloring of $K_n$ has a $K_k$-factor with at least $(\lambda_k - o(1))n$ edges of the same color.
\end{theorem}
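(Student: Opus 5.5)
We plan to reduce an arbitrary 2-coloring of $K_n$ to a bipartite construction by a regularity/stability argument, and then compare against the definition of $\lambda_k$. Write the colors as red and blue.

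\emph{Step 1: regularity and a Ramsey-type dichotomy.} We apply the multicolor regularity lemma to the red graph. This gives a reduced graph $R$ on $t$ clusters, whose pairs are $\eps$-regular with a red density $d_{ij}\in[0,1]$. We round each density into a color class: red-dense ($d_{ij}\ge 1-\gamma$), blue-dense ($d_{ij}\le\gamma$), or mixed.

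\emph{Step 2: mixed and dense structures.} The guiding observation is as follows. Suppose $R$ contains many vertex-disjoint "gadgets" in which a $K_k$ can be embedded with more than $\lambda_k\binom{k}{2}/ (n/k \cdot k/n)$ edges of one color on average. Here the relevant normalization is that a $K_k$-factor has $n/k$ copies and $\lambda n$ edges of one color corresponds to an average of $k\lambda$ majority edges per copy. In that case we embed many monochromatic-heavy copies of $K_k$ via the blow-up/counting lemma. We then complete to a factor using that $K_n$ is complete: leftover vertices are covered greedily by arbitrary cliques, which costs $o(n)$ once the leftover is small.

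Concretely, we would use fractional $K_k$-tilings of the reduced (colored, complete) graph. Each $K_k$ on clusters $i_1,\dots,i_k$ is weighted by the number of red edges we can realize; for a mixed pair both colors are available, so we may choose. This gives a linear program whose value, by standard absorption and the blow-up lemma, is achievable up to $o(n)$. The theorem then reduces to showing that this LP value is always at least $\lambda_k n$ for one of the colors.

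\emph{Step 3: the LP is minimized by the bipartite construction.} This is the main obstacle. We view the reduced graph as a 2-coloring of $K_t$, where mixed pairs may be treated as either color and so only help. We must show that the best fractional $K_k$-factor, maximizing red or blue, is at least what the bipartite construction gives.

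The first attempt would be a symmetrization (Zykov-type) argument. If two clusters $u,v$ have different color neighborhoods, we replace $u$'s profile by $v$'s or vice versa, choosing the option that does not increase the optimal value. Since the LP optimum of a max of two linear-ish quantities is concave enough, one of the two replacements should not increase the maximum. Iterating makes the coloring a blow-up of a coloring on a bounded number of "types", each type homogeneous inside; by Ramsey-type reasoning this reduces to a complete multipartite-like structure.

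Next we show that among such structures the extremal one has at most two classes. One class should be red internally and to everything, and the other blue internally — exactly the bipartite construction with some ratio $\rho$. If symmetrization proves too lossy, the fallback is a direct case analysis. Take the largest red clique-like structure $X$ in the type graph, and argue as follows. If the blue graph inside the complement has a large part that still contains red edges, one can rearrange copies of $K_k$ to strictly gain red or blue edges. The computation of $\lambda_k$ as a minimum over $\rho$ then closes the argument, since by definition every bipartite construction with ratio $\rho$ admits a factor with $\lambda_k n$ edges of one color. The divisibility and rounding of $\rho n$ are absorbed into the $o(n)$ term.
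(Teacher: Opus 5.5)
\textbf{There is a genuine gap, at Step 3.} Steps 1--2 are a plausible standard reduction: regularity, rounding densities, turning a fractional $K_k$-tiling of the reduced graph into an almost-perfect tiling with prescribed colours, and finishing greedily in $K_n$. But they only move the problem. The reduced graph is again an arbitrary 2-coloured complete graph (mixed pairs only help). So the claim of Step 3, that in one colour the fractional optimum is at least $\lambda_k$ per vertex, is the theorem itself in fractional form.

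Your justification for that claim does not work:
\begin{itemize}
\item The quantity to control is $\max(\tau_{\mathrm{red}},\tau_{\mathrm{blue}})$, and taking a maximum destroys concavity rather than producing it. Suppose that, for each colour separately, one of the replacements $u\to v$, $v\to u$ does not increase that colour's optimum. Red may be non-increasing only under one move and blue only under the other. Then values $(1,1)$ can become $(0,2)$ or $(2,0)$, and the maximum increases either way.
\item After reducing to boundedly many homogeneous types, the type structure is once more an arbitrary weighted 2-coloured complete graph. The claim that the extremal one has exactly two classes is asserted, not proved.
\item You never treat the configuration where one colour forms two disjoint cliques and the other the complete bipartite graph between them. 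This is one of the four unavoidable patterns in $\mathcal{F}_m$, it is not a bipartite construction, and ruling it out needs its own computation.
\item The fallback ``rearrange copies to strictly gain edges'' does not say which colour gains, or why the minimiser must be a bipartite construction.
\end{itemize}

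\textbf{What the paper uses instead} is a local gadget decomposition, not a global extremal argument.
\begin{itemize}
\item By the Cutler--Mont\'agh theorem, any colouring in which both colours have density at least $\varepsilon$ contains a member of $\mathcal{F}_m$. That is either a $2m$-vertex bipartite construction with ratio $\frac12$ (in either colour), or two disjoint monochromatic $m$-cliques joined completely in the other colour. Extracting such gadgets greedily leaves constant-size parts $V_1,\dots,V_s$ and an almost monochromatic remainder $W$.
\item Since $\rho_k$ is rational, each ratio-$\frac12$ gadget contains a bipartite construction $B$ with ratio $\rho_k$. $B$ has one $K_k$-factor with $\lambda_k|V(B)|$ red edges and another with $\lambda_k|V(B)|$ blue edges. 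So these gadgets are colour-neutral: they give $\lambda_k$ per vertex in whichever colour is chosen later.
\item A two-clique gadget gives $\frac{k-1}{2}$ per vertex in the colour of its cliques, and $\frac1k\lfloor\frac k2\rfloor\lceil\frac k2\rceil$ per vertex in the other colour. $W$ gives about $\frac{k-1}{2}$ per vertex in its majority colour.
\item Let $\alpha$ be the fraction of the vertices in two-clique gadgets and $W$ that lie in two-clique gadgets whose cliques have the minority colour of $W$. Choose the colour according to whether $\alpha$ exceeds $\frac23$. Together with $\lambda_k\le\frac{k-1}{3}$ (equivalently $\rho_k\ge\frac13$), this gives $(\lambda_k-o(1))n$ edges of one colour.
\end{itemize}
This argument needs no regularity. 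If you keep your Steps 1--2, you still need an argument of this kind on the reduced graph.
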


We now move on to 2-factors (i.e., 2-regular graphs). The case of Hamilton cycles was handled in \cite{GKM_Hamilton}, where it was shown that every 2-coloring of $K_n$ has a Hamilton cycle with at least $(2/3-o(1))n$ edges of the same color. Our next result shows that this bound holds for any $2$-factor $F$. 
\begin{theorem}\label{prop:2-factor}
    For every $n$ and every $n$-vertex 2-factor $F$, every 2-coloring of $K_n$ has a copy of $F$ with at least $(2/3-o(1))n$ edges of the same color.  
\end{theorem}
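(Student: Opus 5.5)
We will derive the statement from the Hamilton cycle result of \cite{GKM_Hamilton} by local surgery. Fix a 2-coloring of $K_n$ with colors red and blue. Say $F$ consists of cycles of lengths $\ell_1,\dots,\ell_t$ with $\sum \ell_i = n$ and each $\ell_i\ge 3$. Split into two cases according to the number of short cycles. Fix a slowly growing $L = L(n)$. The cycles of length at least $L$ cover all but at most $n$ vertices, but there are at most $n/L = o(n)$ of them. The cycles of length less than $L$ are treated separately.

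\textbf{Long cycles.} By \cite{GKM_Hamilton}, there is a Hamilton cycle $H$ with at least $(2/3-o(1))n$ edges of one color, say red. Cut $H$ into consecutive paths of lengths $\ell_1,\dots,\ell_t$ and close each path into a cycle. This changes only $t$ edges: we delete $t$ edges of $H$ and add $t$ closing edges. If $t=o(n)$, i.e.\ if the total number of cycles of $F$ is $o(n)$, we lose only $o(n)$ red edges and we are done. So the main case is when $F$ has $\Omega(n)$ cycles, which forces many short cycles, of length less than $L$.

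\textbf{Short cycles.} Here we argue directly via the structure of the coloring. By pigeonhole on lengths below $L$, there is a length $\ell$ such that $F$ contains $\Omega(n/L^2)$ cycles of length $\ell$; more usefully, the vertices covered by short cycles form a linear-size set. We want to embed the short cycles so that each is at least $2/3$ monochromatic in a common color, with the long part handled by the Hamilton-type argument on the remaining vertices. The plan is to apply a stability-style dichotomy.

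\begin{enumerate}[(i)]
\item If one color, say red, has a large ``robust'' structure, we embed greedily. Concretely, say every set of $\Omega(n)$ vertices spans a red connected matching of proportional size; formalize this via the regularity lemma as a red connected matching in the reduced graph covering a $2/3$ fraction. Then cycles of any length can be embedded almost entirely in red within regular pairs, using the blow-up lemma for bounded-length cycles and for long paths.
\item Otherwise, the coloring is close to an extremal configuration like the bipartite construction of Definition~\ref{def:bipartite construction} with ratio near $1/3$. For such colorings we check directly that any 2-factor has $(2/3-o(1))n$ edges of one color. For example, in the bipartite construction with ratio $\rho$, we place cycles alternating between $X$ and $Y$ or inside $Y$: a cycle using $a$ vertices of $X$ spread out gets $2a$ edges of the $X$-color. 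Optimizing gives $\max(2\rho, 1-\rho)\cdot n\ge (2/3)n$ minus the parity loss from odd cycles, which is at most one edge per cycle. That loss matters when cycles are short, but a short odd cycle can be placed with its bad edge positioned appropriately; for triangles, two $X$-edges per triangle still gives $2/3$ of its edges.
\end{enumerate}

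The cleanest route is likely the regularity framework used for Hamilton cycles in \cite{GKM_Hamilton}. Take the reduced graph $R$ with majority colors, and find in $R$ a monochromatic structure, namely a connected fractional matching or a fractional cover, of weight $(2/3-o(1))$ relative to $|R|$ in one color. Then embed $F$ using the blow-up lemma, since $F$ has maximum degree $2$. Distribute the cycles of $F$ among cluster pairs or triples so that in the red-heavy pieces almost all edges are red. Long cycles can be broken into paths across clusters at a cost of $o(n)$, because connectivity in $R$ lets us route between pieces.

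The main obstacle is the many short odd cycles, especially triangles. They cannot be embedded inside bipartite regular pairs, so we need triangles in the reduced graph. The plan is to replace ``connected matching'' by a fractional tiling with edges and triangles of $R$ in which each triangle contributes at most one non-red edge. We would prove that in any 2-colored (almost complete) reduced graph there is such a tiling achieving a $2/3$ fraction, verifying it first against the bipartite extremal example.
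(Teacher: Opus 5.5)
Your reduction for the case of few cycles is correct. Cutting the Hamilton cycle from \cite{GKM_Hamilton} into consecutive segments of lengths $\ell_1,\dots,\ell_t$ and closing them up loses at most $t$ edges, so $t=o(n)$ is handled. The case of $\Omega(n)$ cycles, however, carries the whole content of the theorem, and there you give a plan rather than a proof. Everything rests on the claim that every 2-colored almost complete reduced graph admits, in a single color, a fractional tiling by edges and triangles (each triangle with at most one edge of the other color) of weight $2/3-o(1)$. You announce this as something you ``would prove''. It is essentially the statement itself at the level of the reduced graph, and no argument is given. The dichotomy (i)/(ii) is not established either. Condition (i) is never made precise, and you neither prove nor cite a stability result showing that a coloring failing (i) is close to the bipartite construction. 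As you note yourself, the connected-matching version of (i) cannot host odd cycles, since regular pairs are bipartite, so (i) as stated already fails for triangle factors. Finally, in the mixed case you propose embedding the short cycles separately and handling the rest by ``the Hamilton-type argument on the remaining vertices''. That argument gives a cycle that is $2/3$-monochromatic in \emph{some} color, not necessarily the color the short cycles are rich in.

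The missing ingredient is a local guarantee that works for every coloring and always lands in the same color, and the paper gets one without regularity. Using Theorem~\ref{thm: get small graph} (Cutler--Mont\'agh), it repeatedly removes vertex-disjoint copies of four patterns. These are the ratio-$\frac13$ bipartite construction $B$ or $\overline{B}$ on $3k$ vertices, or two monochromatic $2k$-cliques joined by the other color ($C$ or $\overline{C}$). Removal stops when, on the leftover set $W$, one color (say blue) has density at most $\frac13$, so a random embedding on $W$ is already $\frac23$ red. Separately, $F$ is cut along its cycles into $k$-vertex segments and closed up into a 2-factor $F'$ whose pieces live on $3k$ or $4k$ vertices, at a cost of $O(n/k)$ edges. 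The key check is that every 2-factor on $3k$ (resp.\ $4k$) vertices embeds into each pattern with about $\frac23$ of its edges \emph{red}, whichever pattern occurs:
\begin{itemize}
\item $B$ and $\overline{B}$ admit embeddings with $2k-1$ edges of each color, using consecutive vertices for one color and an independent set of size $k$ (from 3-colorability) for the other;
\item $C$ gives $4k-2$ red edges;
\item in $\overline{C}$, where red is $K_{2k,2k}$, one deletes an edge from each odd cycle, losing at most a third of the edges because every cycle has length at least $3$.
\end{itemize}
This handles short odd cycles, triangles in particular, locally, and makes the color choice automatic.
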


The constant $\frac{2}{3}$ in Theorem \ref{prop:2-factor} is best possible, and the extremal example is again the bipartite construction. Indeed, consider the bipartite construction with ratio $\frac{1}{3}$. Namely, this coloring of $K_n$ consists of two disjoint sets $X,Y$, $|X| = n/3$, such that all edges touching $X$ are red and all edges inside $Y$ are blue. Let $F$ be any $n$-vertex 2-factor in this red/blue-coloring of $K_n$. The number of edges of $F$ contained in $Y$ is at most $|Y| = \frac{2}{3}n$ (since in every subgraph of $F$, the number of edges does not exceed the number of vertices). Also, the number of edges of $F$ touching $X$ is at most $2|X| = \frac{2n}{3}$, since $F$ is 2-regular. Thus, $F$ has at most $\frac{2n}{3}$ edges of each color.  

\section{Proof of Theorems \ref{thm:max degree} and \ref{thm:d-regular}}

We first give an outline of the proofs of Theorems \ref{thm:max degree} and \ref{thm:d-regular}. For the sake of simplicity, we focus on Theorem \ref{thm:d-regular}.
First, we show that if the given red/blue coloring of $K_n$ has a biased bisection $(X,Y)$, namely a bisection where $e(X,Y)$ deviates significantly from $\frac{1}{2}|X||Y|$ in one (and hence both) of the colors, then we can find a copy of $F$ with high discrepancy. This is done by taking a bisection of $F$ whose number of edges deviates significantly from $\frac{e(F)}{2}$ (it is known that it is possible to obtain deviation $\Omega(\sqrt{d}n)$) and embedding this biased bisection of $F$ randomly onto the biased bisection of $G$; see Lemma \ref{lem:cut discrepancy}.

The above allows us to assume that in the red/blue coloring of $K_n$, all bisections are almost unbiased. We then use this assumption to find pairs of vertices of $K_n$ with useful properties. Roughly speaking, we find a partition $V(K_n) = X \cup Y$ and linearly many pairs of vertices $x_i,x'_i \in X$ such that the degrees of $x_i,x'_i$ to $Y$ are close to each other, but on the other hand $|N_Y(x_i) \triangle N_Y(x'_i)|$ is large (see Lemma \ref{lem:host-good}). The usefulness of such pairs is as follows: We fix a partition $V(F) = U \cup V$ to match the partition $X \cup Y$ of $K_n$, and find linearly many pairs of vertices $u_i,u'_i \in U$ again having useful properties. Roughly speaking, $u_i,u'_i$ have similar degrees to $V$ but $|N_V(u_i) \triangle N_V(u'_i)|$ is large. We then map $U$ into $X$, mapping $u_i,u'_i$ onto $x_i,x'_i$ for every $i$, and map $V$ randomly onto $Y$. 
This mapping gives one copy of $F$ in $K_n$. We then obtain a second copy by switching the roles of $u_i$ and $u'_i$ for some of the pairs $i$. 
The point is that for a given index $i$, there is constant probability for the switching to produce an excess $\Omega(\sqrt{d})$ of red edges. This is due to the hypergeometric distribution involved, which has standard deviation $\Theta(\sqrt{d})$. Thus, switching only those pairs which lead to an excess of red edges, we see that this switching produces an $\Omega(n\sqrt{d})$ excess of red edges between $U$ and $V$, and this is the main idea. We still have to consider the effect of switching on the edges inside $U$. If the effect is small (meaning that the number of red edges does not change much), then the above argument suffices. And the complementary case needs to be handled differently. The argument we just outlined is the proof of Lemma \ref{lem:main}.  

The rest of this section is organized as follows.
In Section \ref{subsec:bisections} we consider biased bisections and use them to obtain copies of $F$ with high discrepancy. We also state some known results on the existence of biased bisections in various graphs $F$. In Section \ref{subsec:main lemma} we prove one of our main lemmas, Lemma \ref{lem:main}, whose proof was outlined above. Section \ref{subsec:host} is then devoted to guaranteeing that the assumptions of Lemma \ref{lem:main} hold for the red/blue coloring of $K_n$. Finally, we combine everything in Section \ref{subsec:proof of theorems} to prove Theorems \ref{thm:max degree} and \ref{thm:d-regular}.

Throughout this section, we consider a $\{\pm 1\}$-coloring of $K_n$, and let $G$ denote the graph consisting of the edges of color 1. Then, given an $n$-vertex graph $F$, our goal is to find a copy of $F$ in $K_n$ such that $|E(F) \cap E(G)|$ deviates significantly from $\frac{e(F)}{2}$. We assume that $n$ is large enough wherever \nolinebreak needed.   

\subsection{Discrepancy through cuts}\label{subsec:bisections}

Some of the key ingredients in the proofs of Theorems 
\ref{thm:max degree} and \ref{thm:d-regular} are theorems stating that the graph $F$ has a bisection (i.e., a partition $V(F) = U \cup V$ with $|U| = \lfloor \frac{n}{2} \rfloor$) whose size $e(U,V)$ deviates significantly from the average $e(F)/2$. We need one such statement for connected graphs of maximum degree at most $(1-\varepsilon)n$ and one for $d$-regular graphs (using the former for Theorem \ref{thm:max degree} and the latter for Theorem \ref{thm:d-regular}). 
Here we rely on known results; see Theorems \ref{thm:bisection max degree} and \ref{thm:bisection d-regular} below for these statements and references. 

To exploit the existence of a high-discrepancy bisection in $F$, we need to know that the red/blue coloring of $K_n$ also has a bisection whose size deviates from its expected value. Namely, we need a bisection whose edges are (relatively) far from being colored evenly between red and blue.
The following lemma shows that having such high-discrepancy bisections -- one in $F$ and one in $G$ -- suffices to get a copy of $F$ with high discrepancy. 
\begin{lemma}\label{lem:cut discrepancy}
    The following holds for every large enough $n$.
    Let $t \geq 1$, let $F$ be an $n$-vertex graph, and suppose that there is a partition $V(F) = U \cup V$ with 
    $|U| = \lfloor \frac{n}{2} \rfloor$ and $|e_F(U,V) - \frac{e(F)}{2}| \geq t$. 
    Let $\gamma \in (0,1)$, let $G$ be a subgraph of $K_n$, and suppose that there is a partition $V(G) = X \cup Y$ with 
    $|X| = \lfloor \frac{n}{2} \rfloor$ and $|e_G(X,Y) - \frac{1}{2}|X||Y|| \geq \gamma n^2$. 
    Suppose that $\gamma t \geq \frac{10e(F)}{n}$. 
    Then there is a copy $F_0$ of $F$ in $K_n$ with 
    $\big| |E(F_0) \cap E(G)| - \frac{e(F)}{2} \big| \geq 0.5 \gamma t$. 
\end{lemma}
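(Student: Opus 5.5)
Take a uniformly random bijection $\phi: V(F)\to V(K_n)$ conditioned on $\phi(U)=X$ and $\phi(V)=Y$; this is possible since $|U|=|X|=\lfloor n/2\rfloor$. Let $F_0=\phi(F)$. We compute $\mathbb{E}\,|E(F_0)\cap E(G)|$ and compare it with the expectation under a second, fully random embedding. Averaging then gives one of the two embeddings with the required deviation.

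\textbf{Expectation under $\phi$.} Put $a=|X|$ and $b=|Y|$. An edge of $F$ between $U$ and $V$ is mapped to a uniformly random pair in $X\times Y$, so it lands in $G$ with probability $p_{XY}=e_G(X,Y)/(ab)$. An edge inside $U$ lands in $G$ with probability $p_X=e_G(X)/\binom a2$, and an edge inside $V$ with probability $p_Y=e_G(Y)/\binom b2$. Hence
\[
\mathbb{E}\,|E(F_0)\cap E(G)| = e_F(U,V)\,p_{XY}+e_F(U)\,p_X+e_F(V)\,p_Y .
\]
Also let $p=e(G)/\binom n2$. By assumption $|p_{XY}-\tfrac12|\ge \gamma n^2/(ab)\ge 4\gamma$, up to rounding.

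\textbf{Comparison embedding.} Let $\psi$ be a uniformly random bijection $V(F)\to V(K_n)$, so every edge lands in $G$ with probability $p$, giving expectation $e(F)p$. There are two cases.
\begin{itemize}
\item If $|p-\tfrac12|\ge \gamma t/e(F)$, then $\psi$ already works: some outcome has $\big||E(F_0)\cap E(G)|-\tfrac{e(F)}2\big|\ge |e(F)p-\tfrac{e(F)}2|\ge \gamma t\ge 0.5\gamma t$.
\item Otherwise $p$ is close to $\tfrac12$ and we use $\phi$.
\end{itemize}

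\textbf{The case $p\approx\tfrac12$.} Write $e_F(U)+e_F(V)=e(F)-e_F(U,V)$. The quantities $p_X,p_Y$ are tied to $p_{XY}$ through
\[
e(G)=e_G(X,Y)+e_G(X)+e_G(Y), \qquad \binom a2+\binom b2=\binom n2-ab .
\]
If $p_X=p_Y=q$, this gives $q\bigl(\binom n2-ab\bigr)=p\binom n2-p_{XY}ab$, so $q-\tfrac12\approx-(p_{XY}-\tfrac12)$, since $ab\approx\tfrac12\binom n2$. Then
\[
\mathbb{E}-\frac{e(F)}2\approx \Bigl(e_F(U,V)-\bigl(e(F)-e_F(U,V)\bigr)\Bigr)\Bigl(p_{XY}-\frac12\Bigr)=2\Bigl(e_F(U,V)-\frac{e(F)}2\Bigr)\Bigl(p_{XY}-\frac12\Bigr),
\]
whose absolute value is at least $2t\cdot4\gamma=8\gamma t$.

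The main obstacle is that $p_X$ and $p_Y$ need not be equal. I would remove this by symmetrization: average over the two assignments $U\to X$ and $U\to Y$. Equivalently, since $|U|=|X|$ and $|V|=|Y|$ may differ by one, swap the roles when $n$ is even, and absorb the parity error otherwise. Averaging the two embeddings replaces $e_F(U)p_X+e_F(V)p_Y$ by $\tfrac12(e_F(U)+e_F(V))(p_X+p_Y)$ plus a correction term. Alternatively, to avoid swapping, use the exact identity
\[
e_F(U)\,p_X+e_F(V)\,p_Y=\tfrac12\bigl(e_F(U)+e_F(V)\bigr)(p_X+p_Y)+\tfrac12\bigl(e_F(U)-e_F(V)\bigr)(p_X-p_Y).
\]
I would then handle the second term by a third random comparison. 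One option is the embedding mapping $U\to Y$, which flips the sign of that term, so that one of the two expectations has the correction term with a favourable sign. The expected value then deviates by at least about $8\gamma t$.

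\textbf{Error terms.} The remaining errors are the rounding in $ab$ versus $\tfrac12\binom n2$, the $O(1/n)$ corrections in $\binom a2$, and the slack $|p-\tfrac12|<\gamma t/e(F)$. These contribute $O(e(F)/n)+O(\gamma t)\cdot$small. The hypothesis $\gamma t\ge 10e(F)/n$ is exactly what makes the $O(e(F)/n)$ terms at most a fraction of $\gamma t$, leaving $\ge 0.5\gamma t$. Finally, some outcome of the random embedding attains a value at least as far from $\tfrac{e(F)}2$ as its expectation, which gives $F_0$.
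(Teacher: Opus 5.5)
Your proposal is correct and is essentially the paper's argument: a random embedding with $U\to X$, $V\to Y$, a fully random embedding to dispose of the case $|p-\frac12|\ge \gamma t/e(F)$, and then taking the better of the two pairings $U\to X$ or $U\to Y$. The paper does this last step via the Chebyshev sum inequality after assuming $d_X\ge d_Y$ and $e(U)\ge e(V)$, and complementing $G$ to fix the sign; your averaging of the two pairings handles both signs at once.

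Two details to tighten. For odd $n$, the parity error is only $O(e(F)/n)$ if you move a vertex of $V$ of degree $O(e(F)/n)$ (the paper deletes such a vertex) rather than an arbitrary one. Also, the slack term $2\bigl(e(F)-e_F(U,V)\bigr)\bigl(p-\frac12\bigr)$ can be as large as $2\gamma t$, so it is not negligible, although the $8\gamma t$ main term still absorbs it.
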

\noindent
For the proof of Lemma \ref{lem:cut discrepancy}, as well as later on the in paper, we need the following simple lemma.
\begin{lemma}\label{lem: random embedding}
        Let $F$ be an $n$-vertex graph, and let $G$ be a subgraph of $K_n$ with $p\binom{n}{2}$ edges, $p\in[0,1]$. Then there is a copy $F_0$ of $F$ in $K_n$ such that
        $|E(F_0) \cap E(G)| \geq p \cdot e(F)$.
    \end{lemma}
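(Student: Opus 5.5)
We prove Lemma \ref{lem: random embedding} by averaging over a uniformly random embedding. Choose a permutation $\sigma$ of $V(K_n)$ uniformly at random, fix an identification of $V(F)$ with $V(K_n)$, and let $F_0 = \sigma(F)$ be the copy of $F$ whose edge set is $\{\sigma(u)\sigma(v) : uv \in E(F)\}$. Every such $F_0$ is a copy of $F$ in $K_n$, so it is enough to show that $\mathbb{E}\big[|E(F_0) \cap E(G)|\big] = p\cdot e(F)$. Some permutation then attains at least the expectation, and that permutation gives the required copy.

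To compute the expectation, apply linearity over the edges of $F$. For a fixed edge $uv \in E(F)$, the unordered pair $\{\sigma(u),\sigma(v)\}$ is uniformly distributed over all $\binom{n}{2}$ pairs of distinct vertices of $K_n$. This holds because a uniform random permutation sends the ordered pair $(u,v)$ to a uniform ordered pair of distinct vertices. Hence
\[
\Pr\big[\sigma(u)\sigma(v) \in E(G)\big] = \frac{e(G)}{\binom{n}{2}} = p .
\]
Summing over all $e(F)$ edges of $F$ gives
\[
\mathbb{E}\big[|E(F_0)\cap E(G)|\big] = \sum_{uv\in E(F)} \Pr\big[\sigma(u)\sigma(v)\in E(G)\big] = p\cdot e(F),
\]
which finishes the argument.

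There is no real obstacle here. The only point to check is that edges of $F$ map to uniformly random pairs, and this follows from the symmetry of the uniform permutation. The lemma also holds with a copy satisfying $|E(F_0)\cap E(G)| \le p\cdot e(F)$, since some outcome lies at or below the mean. Equivalently, it can be applied to the complement of $G$, which has density $1-p$.
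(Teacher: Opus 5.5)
Your proof is correct and is the same argument as the paper's: take a uniformly random bijection and use linearity of expectation, since each edge of $F$ lands on a uniformly random pair of vertices, so the expected overlap with $G$ is exactly $p\cdot e(F)$. The paper states this in a single line, and you have simply written out why each edge of $F$ lands on a uniformly random pair.
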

    \begin{proof}
        Let $\phi:V(F)\to V(K_n)$ be a random bijection. The expectation of $|\varphi(E(F)) \cap E(G)|$ is $p \cdot e(F)$.
    \end{proof}

\begin{proof}[Proof of Lemma \ref{lem:cut discrepancy}]
    We would like to assume that $n$ is even. If not, then let $v \in V$ be a vertex with $d_F(v) \leq \frac{2e(F)}{n}$ (such a vertex exists by averaging), and consider $F' = F - v$. Now $|V(F')|$ is even, and we have 
    $\big|e_{F'}(U,V \setminus \{v\}) - \frac{e(F')}{2}\big| \geq 0.9t$. Similarly, deleting a vertex $y \in Y$, we get a graph $G' = G - y$ with $\left| e_{G'}(X,Y \setminus \{y\}) - \frac{1}{2}|X|(|Y|-1) \right| \geq 0.9\gamma n^2$. Now, apply the statement for even $n$ to obtain an embedding $F'_0$ of $F'$ into $K_{n-1}$ with 
    $\big| |E(F'_0) \cap E(G')| - \frac{e(F')}{2} \big| \geq 0.8 \gamma t$ (this is what the proof below will give). Bringing back the vertex $v$ (and mapping it to $y$), we still have 
    $\big| |E(F_0) \cap E(G)| - \frac{e(F)}{2} \big| \geq 0.5\gamma t$. 

    So, assume from now on that $n$ is even, and hence $|U| = |V| = |X| = |Y| = \frac{n}{2}$.
    Let $p = e(G)/\binom{n}{2}$. We may assume that $|p - \frac{1}{2}| \leq \frac{\gamma t}{e(F)}$, because otherwise we are done by Lemma \ref{lem: random embedding} (applied to both $G$ and $\overline{G}$). 
 
    For convenience, put 
    $$
    d_X := \frac{e_G(X)}{\binom{|X|}{2}}, \; \; 
	d_Y := \frac{e_G(Y)}{\binom{|Y|}{2}}, \; \; d_{X,Y} := \frac{e_G(X,Y)}{|X||Y|}.$$ 
    Note that $|d_{X,Y} - \frac{1}{2}| \geq 4\gamma$ because 
	$|e_G(X,Y) - \frac{1}{2}|X||Y|| \geq \gamma n^2$.
	Also, we have
	$$
	\frac{\binom{|X|}{2}}{\binom{n}{2}} \cdot d_X + 
	\frac{\binom{|Y|}{2}}{\binom{n}{2}} \cdot d_Y + 
	\frac{|X||Y|}{\binom{n}{2}} \cdot d_{X,Y} = \frac{e(G)}{\binom{n}{2}} = p. 
	$$
	Using that $|X| = |Y| = \frac{n}{2}$, we get that\footnote{Here, the notation $x = y \pm z$ means that $y-z \leq x \leq y+z$.}
	\begin{equation}\label{eq:density average}
    d_X + d_Y + 2d_{X,Y} = 4p \pm \frac{3}{n}.
	\end{equation}
    The following simple claim is a special case of the Chebyshev sum inequality.
    \begin{claim}\label{claim:Chebyshev sum inequality}
		Let $x,y,u,v$ be real numbers with $x \geq y$ and 
        $u \geq v$. Then 
        $
        \frac{1}{2}(xu+yv) \geq \frac{x+y}{2} \cdot \frac{u+v}{2}.
        $
	\end{claim}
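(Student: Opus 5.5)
The plan is to expand the difference of the two sides and factor it. Multiplying the claimed inequality by $4$, it is equivalent to
$$
2(xu+yv) - (x+y)(u+v) \geq 0 .
$$
Expanding the product gives $(x+y)(u+v) = xu + xv + yu + yv$, so the left-hand side becomes $xu + yv - xv - yu$. This factors as $(x-y)(u-v)$.

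By hypothesis $x \geq y$ and $u \geq v$, so both factors are nonnegative. Hence their product is nonnegative, which is exactly the desired inequality.

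There is no real obstacle here; the only step requiring care is the algebraic identity
$$
2(xu+yv)-(x+y)(u+v) = (x-y)(u-v),
$$
which one verifies by direct expansion. In the subsequent argument, the claim will presumably be applied with $x,y$ chosen as the embedding densities and $u,v$ as the corresponding edge counts of $F$ (ordered consistently), to compare a non-random "aligned" embedding against the random average.
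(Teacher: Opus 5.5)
Your proof is correct and matches the paper's: both reduce the claim to the identity $\frac{1}{2}(xu+yv)-\frac{x+y}{2}\cdot\frac{u+v}{2}=\frac{1}{4}(x-y)(u-v)$ (you scale by $4$ first) and conclude from the nonnegativity of both factors.
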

	\begin{proof}
		$
		\frac{1}{2}(xu+yv) - \frac{x+y}{2}\cdot \frac{u+v}{2} = 
		\frac{1}{4}(x-y)(u-v) \geq 0.
		$
	\end{proof}
    Returning to the proof of Lemma \ref{lem:cut discrepancy}, we may assume, without loss of generality, that $d(X) \geq d(Y)$ (else switch the roles of $X$ and $Y$) and similarly 
	$e(U) \geq e(V)$ (else switch the roles of $U$ and $V$).
    Let $f : V(F) \rightarrow V(K_n)$ be chosen uniformly at random among all bijections satisfying $f(U) = X$ and $f(V) = Y$. 
	Putting $F_0 := f(F)$, we have
	\begin{equation}\label{eq:expected number of c-color edges in random embedding}
	\mathbb{E}[|E(F_0) \cap E(G)|] = 
	e(U) \cdot d_X + 
	e(V) \cdot d_Y + 
	e(U,V) \cdot d_{X,Y}.
	\end{equation}
    By Claim \ref{claim:Chebyshev sum inequality} and Equation \eqref{eq:density average}, 
	$$
	e(U) \cdot d_X + e(V) \cdot d_Y 
	\geq 
	(e(U)+e(V)) \cdot \frac{d_X + d_Y}{2} \geq (e(F) - e(U,V)) \cdot \left( 2p - d_{X,Y} - \frac{1.5}{n} \right).
	$$
	Plugging this into \eqref{eq:expected number of c-color edges in random embedding}, we get
    \begin{align}\label{eq:expected number of c-color edges in random embedding 2}
	\mathbb{E}[|E(F_0) \cap E(G)|] \nonumber &\geq
	(e(F) - e(U,V)) \cdot \left( 2p - d_{X,Y} - \frac{1.5}{n} \right) + e(U,V) \cdot d_{X,Y}
	\\ &\geq 
	p \cdot e(F) + \left( e(F) - 2e(U,V) \right) \cdot (p - d_{X,Y}) - \frac{1.5e(F)}{n}.
	\end{align}
    We may assume that if 
    $e(U,V) \leq \frac{e(F)}{2} - t$ then
    $d_{X,Y} \leq \frac{1}{2} - 4\gamma$, and if $e(U,V) \geq \frac{e(F)}{2} + t$ then
    $d_{X,Y} \geq \frac{1}{2} + 4\gamma$. 
    Indeed, this can be guaranteed by replacing $G$ with its complement $\overline{G}$, if necessary, and noting that if the conclusion of Lemma \ref{lem:cut discrepancy} holds for $\overline{G}$ then it also holds for $G$. 
    Recall also that $|p - \frac{1}{2}| \leq \frac{\gamma t}{e(F)} \leq \gamma$, and so $|d_{X,Y} - p| \geq 3\gamma$.
    It now follows that
    $$
    \left( e(F) - 2e(U,V) \right) \cdot (p - d_{X,Y}) \geq 
    6\gamma t.
    $$
    Plugging this into \eqref{eq:expected number of c-color edges in random embedding 2} and using that $p \geq \frac{1}{2} - \frac{\gamma t}{e(F)}$, we get that
    $$
    \mathbb{E}[|E(F_0) \cap E(G)|] \geq 
    \frac{e(F)}{2} + 5\gamma t - 
    \frac{1.5e(F)}{n} \geq 
    \frac{e(F)}{2} + 4\gamma t.
    $$
	Hence, there is a choice for $f$ such that 
    $|E(F_0) \cap E(G)| \geq \frac{e(F)}{2} + \gamma t$, as required.
\end{proof}

As mentioned above, we will combine Lemma \ref{lem:cut discrepancy} with known results on the existence of unbalanced bisections. 
Lee, Loh and Sudakov \cite{LLS} studied bisections in graphs of given maximum degree, proving that every $n$-vertex graph $F$ with maximum degree $\Delta$ has a bisection of size at least $\frac{e(F)}{2} + \frac{n-\max(\tau,\Delta-1)}{4}$, where $\tau$ is the number of {\em tight components} of $F$ (see \cite[Theorem 1.3]{LLS}). We will not need to define this notion precisely; it suffices for us that every tight component has an odd number of vertices. Therefore, if $F$ has no isolated vertices then $\tau \leq \frac{n}{3}$. We thus get the following: 
\begin{theorem}[\cite{LLS}]\label{thm:bisection max degree}
Every $n$-vertex graph $F$ with maximum degree $\Delta$ and no isolated vertices has a partition $V(F) = U \cup V$ with $|U| = \lfloor \frac{n}{2} \rfloor$ and $e(U,V) \geq \frac{e(F)}{2} + \min \left( \frac{n}{6},\frac{n+1-\Delta}{4} \right)$.\footnote{We note that when using Theorem \ref{thm:bisection max degree} in the proof of Theorem \ref{thm:max degree}, we will only apply Theorem \ref{thm:bisection max degree} in a case where the maximum degree of $F$ is assumed to be small, i.e., at most $\delta n$ for some small constant $\delta$. Hence, the conclusion is that there is a bisection of size at least $\frac{m}{2}+\frac{n}{6}$.}
\end{theorem}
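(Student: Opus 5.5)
This statement follows directly from \cite[Theorem 1.3]{LLS}, quoted just before it. That theorem says every $n$-vertex graph $F$ with maximum degree $\Delta$ has a bisection of size at least $\frac{e(F)}{2} + \frac{n-\max(\tau,\Delta-1)}{4}$, where $\tau$ is the number of tight components of $F$. The plan is to bound $\tau$ using the absence of isolated vertices, and then split into cases according to which term achieves the maximum.

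First I bound $\tau$. The only property of tight components I need is the one already mentioned: each has an odd number of vertices. Tight components are (vertex-disjoint) connected components of $F$. Since $F$ has no isolated vertices, every component has at least $2$ vertices, and an odd component therefore has at least $3$. So the tight components are disjoint vertex sets of size at least $3$, which gives $3\tau \le n$, i.e. $\tau \le n/3$.

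Next I compare $n - \max(\tau,\Delta-1)$ with the claimed bound in two cases.
\begin{itemize}
\item If $\max(\tau,\Delta-1) = \tau$, then $n - \tau \ge n - n/3 = 2n/3$. Hence the excess is at least $\frac{2n/3}{4} = \frac{n}{6}$.
\item If $\max(\tau,\Delta-1) = \Delta - 1$, then the excess is exactly $\frac{n-\Delta+1}{4}$.
\end{itemize}
In both cases the excess is at least $\min\left(\frac{n}{6},\frac{n+1-\Delta}{4}\right)$. A bisection in \cite{LLS} has one part of size $\lfloor n/2\rfloor$, so naming that part $U$ yields the stated partition.

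I expect no real obstacle. The only point needing care is the structural claim that tight components are components of $F$ with an odd number of vertices, which is taken from \cite{LLS} as stated in the text. If tight components were only defined as odd-sized disjoint vertex sets, without being full components, the counting would still go through as long as each has at least $3$ vertices. That holds because a tight component of size $1$ would be an isolated vertex, which we have excluded.
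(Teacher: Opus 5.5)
Your proposal is correct and is essentially the paper's own derivation. You invoke \cite[Theorem 1.3]{LLS} and note that tight components are disjoint odd-sized vertex sets, which have at least $3$ vertices when there are no isolated vertices, so $\tau \le n/3$. Then $n-\max(\tau,\Delta-1) \ge \min\left(\frac{2n}{3},\, n+1-\Delta\right)$, which gives the bound after dividing by $4$.
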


Moving on to $d$-regular graphs, we need some definitions. For an $n$-vertex graph $F$ with density $p := e(F)/\binom{n}{2}$ and for a subset $U \subseteq V(F)$, denote
$\disc(U) := e(U) - p\binom{|U|}{2}$. Also, $\disc^+(F)$ denotes the maximum of $\disc(U)$ over all subsets $U \subseteq V(F)$, and $\disc^-(F)$ denotes the maximum of $-\disc(U)$ over all subsets $U \subseteq V(F)$. 
Bollob\'as and Scott \cite{Bollobas_Scott} proved that if $p(1-p) \geq 1/n$, then $\disc^+(F) \cdot \disc^-(F) \geq \Omega(p(1-p)n^3)$. In particular, if $F$ is $d$-regular with $2 \leq d \leq (1-\varepsilon)n$ and $n \geq 2/\varepsilon$, then $\disc^+(F) \cdot \disc^-(F) \geq \Omega(\varepsilon dn^2)$. Therefore,
$\max\{ \disc^+(F), \disc^-(F) \} \geq \Omega(\sqrt{\varepsilon d} n)$.
R\"ati, Sudakov and Tomon \cite[Lemma 2.6]{RST} proved that for a $d$-regular $F$ on $n \geq 100$ vertices, there exists a bisection $V(F) = U \cup V$ with $e(U,V) \leq \frac{e(F)}{2} - \frac{1}{3}\disc^+(F)$, and there exists a bisection $V(F) = U \cup V$ with $e(U,V) \geq \frac{e(F)}{2} + \frac{1}{3}\disc^-(F)$; this second statement does not appear explicitly in \cite{RST} (they instead state it for general partitions $U,V$, rather than for bisections), but it is easy to see that the proof in \cite{RST} gives this statement.   
Combining all of the above, we get:
\begin{theorem}[Follows from \cite{Bollobas_Scott,RST}]\label{thm:bisection d-regular}
There is an absolute constant $c = c_{\ref{thm:bisection d-regular}} > 0$ such that the following holds for every $\varepsilon > 0$, every $n \geq n_0(\varepsilon)$, and every $d$-regular $n$-vertex graph $F$ with $2 \leq d \leq (1-\varepsilon)n$. There exists a partition $V(F) = U \cup V$ with $|U| = \lfloor \frac{n}{2} \rfloor$ such that $\big| e(U,V) - \frac{e(F)}{2} \big| \geq c \cdot \sqrt{\varepsilon d}n$. 
\end{theorem}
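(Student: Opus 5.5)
The plan is to combine the Bollob\'as--Scott discrepancy bound with the bisection lemma of R\"ati, Sudakov and Tomon, exactly as in the discussion preceding the statement. Set $p := e(F)/\binom{n}{2} = \frac{d}{n-1}$.

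\textbf{Step 1: verify the Bollob\'as--Scott hypothesis.} We need $p(1-p) \geq 1/n$. Since $d \geq 2$, we have $p \geq \frac{2}{n-1}$. Since $d \leq (1-\varepsilon)n$, we have $1-p \geq 1 - \frac{(1-\varepsilon)n}{n-1} \geq \varepsilon/2$ once $n \geq n_0(\varepsilon)$. If $p \leq 1/2$, then $p(1-p) \geq p/2 \geq \frac{1}{n-1} \geq \frac1n$. If $p > 1/2$, then $p(1-p) \geq \frac{1}{2}\cdot\frac{\varepsilon}{2} \geq \frac1n$ for $n$ large.

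\textbf{Step 2: bound the product of discrepancies.} Bollob\'as--Scott now gives
$\disc^+(F)\cdot \disc^-(F) \geq c_0\, p(1-p) n^3$.
Using $p(1-p) \geq \frac{d}{n}\cdot\frac{\varepsilon}{2}$ (up to constants), this is $\Omega(\varepsilon d n^2)$. Hence
$\max\{\disc^+(F),\disc^-(F)\} \geq c_1\sqrt{\varepsilon d}\,n$
for an absolute constant $c_1$.

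\textbf{Step 3: convert discrepancy into an unbalanced bisection.} Here we take $n \geq 100$.
\begin{itemize}
\item If $\disc^+(F)$ attains the maximum, the bisection from \cite[Lemma 2.6]{RST} satisfies $e(U,V) \leq \frac{e(F)}{2} - \frac13 \disc^+(F)$.
\item Otherwise, the second (bisection) form of that lemma gives $e(U,V) \geq \frac{e(F)}{2} + \frac13 \disc^-(F)$.
\end{itemize}
Either way, $\big|e(U,V) - \frac{e(F)}{2}\big| \geq \frac{c_1}{3}\sqrt{\varepsilon d}\,n$, so $c = c_1/3$ works.

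\textbf{Main obstacle: the $\disc^-$ case.} The only nonroutine point is this case, whose bisection form is not stated explicitly in \cite{RST}. I would redo their argument as follows. Take $W$ with $e(W) = p\binom{|W|}{2} - \disc^-(F)$; replacing $W$ by its complement if needed (for regular graphs, $\disc(W)$ and $\disc(V\setminus W)$ are closely related), we may assume $|W| \approx n/2$ or smaller. Then choose a random bisection with $U$ containing a random half of $W$ together with appropriate random vertices. Regularity makes $e(U,V)$ in expectation $\frac{e(F)}{2}$ plus a positive multiple of $-\disc(W)$, up to $O(d)$ error terms. Those error terms are $o(\sqrt{d}n)$ only when $d \ll n^2$, which always holds; more precisely $d \leq \sqrt{d}\,n$, and this is absorbed by adjusting constants, with $\disc^-$ itself at least of order $\sqrt{\varepsilon d}\,n$.
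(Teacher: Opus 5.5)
Your Steps 1--3 follow the paper's argument exactly. You verify $p(1-p)\ge 1/n$, apply Bollob\'as--Scott to get $\max\{\disc^+(F),\disc^-(F)\}=\Omega(\sqrt{\varepsilon d}\,n)$, and convert this via \cite[Lemma 2.6]{RST}. The paper, like you, simply asserts that the bisection form of the $\disc^-$ statement follows from RST's proof.

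\textbf{The gap is in your sketch of that step, which fails as written.} Suppose $U$ contains a random half of $W$ and $|U|=\lfloor n/2\rfloor$. Then $U$ also contains about half of $V(F)\setminus W$. Let $\alpha,\beta$ be the fractions of $W$ and $V(F)\setminus W$ placed in $U$. Then $\mathbb{E}[\disc(U)]\approx\disc(W)(\alpha-\beta)^2\approx 0$. Exactly, with $|U\cap W|=|W|/2$,
\[
\mathbb{E}[\disc(U)]=-\tfrac{1}{4}\disc(W)\Bigl(\tfrac{1}{|W|-1}+\tfrac{1}{n-|W|-1}\Bigr),
\]
which is negligible and even of the wrong sign. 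Your choice is essentially a uniform random bisection, so it gains nothing over $e(F)/2$.

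\textbf{The fix is to put all of $W$ into $U$.}
\begin{itemize}
\item For $d$-regular $F$ one has $\disc(W)=\disc(V(F)\setminus W)$ exactly, so you may assume $|W|\le n/2$.
\item Set $U=W\cup R$, where $R$ is a uniformly random subset of $V(F)\setminus W$ of size $\lfloor n/2\rfloor-|W|$.
\item Use $e(V(F)\setminus W)=p\binom{n-|W|}{2}+\disc(W)$ and $e(W,V(F)\setminus W)=p|W|(n-|W|)-2\disc(W)$. Put $\theta=|R|/(n-|W|)\le\frac12$ and $\theta'=(|R|-1)/(n-|W|-1)$. Then
\[
\mathbb{E}[\disc(U)]=\disc(W)\,(1-2\theta+\theta\theta'),
\]
and the factor is $(1-\theta)^2-O(1/n)\ge\frac14-O(1/n)$.
\item For $|U|=n/2$ one has the identity $e(U,V)-\frac{e(F)}{2}=-2\disc(U)+\frac{dn}{4(n-1)}$; for odd $n$ there is an analogous $O(d)$ correction.
\item Hence some outcome gives $e(U,V)-\frac{e(F)}{2}\ge(\frac12-o(1))\disc^-(F)$. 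The same argument handles $\disc^+$.
\end{itemize}

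A smaller point: to absorb the $O(d)$ error terms you need $d\le\sqrt{n}\cdot\sqrt{d}=o(\sqrt{\varepsilon d}\,n)$ for $n\ge n_0(\varepsilon)$. The inequality $d\le\sqrt{d}\,n$ that you cite is not enough for this.
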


\subsection{The main lemma}\label{subsec:main lemma}
In this section, we state and prove Lemma \ref{lem:main}, which is one of the main ingredients in our proofs.
Before stating this lemma, we need the following two definitions, introducing (somewhat technical) conditions on the guest graph $F$ and the host graph $G$. These conditions will allow us to find a copy of $F$ with high discrepancy.

\begin{definition}[guest-good]\label{def:guest}
An $n$-vertex graph $F$ is {\em guest-good with value $t$} if there is a partition $V(F) = U \cup V$ with $|U| = \lfloor \frac{n}{2} \rfloor$, and there are distinct vertices $u_1,u'_1,\dots,u_m,u'_m \in U$, $m \leq 0.05n$, and integers $d_1,\dots,d_m \geq 1$, such that $t=\sum_{i=1}^m \sqrt{d_i}$ and, moreover, the following hold for every \nolinebreak $i \in [m]$.
\begin{enumerate}
    \item $|N_V(u_i) \setminus N_V(u'_i)| \geq 0.01d_i$.
    \item $|N_V(u_i) \setminus N_V(u'_i)|,
    |N_V(u'_i) \setminus N_V(u_i)| \leq \frac{2}{3}|V|$.
    \item $U$ is independent, or $|d_V(u_i) - d_V(u'_i)| \leq 20\sqrt{d_i}$. 
\end{enumerate}
\end{definition}

\begin{definition}[host-$\beta$-good]\label{def:host}
An $n$-vertex graph $G$ is {\em host-$\beta$-good} if there is a partition $V(G) = X \cup Y$ with $|X| = \lfloor \frac{n}{2} \rfloor$, and there are distinct vertices $x_1,x'_1,\dots,x_m,x'_m \in X$, $m = 0.05n$, such that for every $1\leq i\leq m,$ we have 
\begin{enumerate}
    \item $|d_Y(x_i) - d_Y(x'_i)| \leq \beta n$, 
    \item $0.02|Y| \leq |N_Y(x_i) \triangle N_Y(x'_i)| \leq 0.98|Y|$, and
    \item  $0.1|Y| \leq d_Y(x_i),d_Y(x'_i) \leq 0.9|Y|$.
\end{enumerate}
\end{definition}

\noindent
We are now ready to state the main result of this section.
\begin{lemma}\label{lem:main}
    There is an absolute constant 
    $\beta = \beta_{\ref{lem:main}} > 0$ such that the following holds for all large enough $n$. 
    Let $F$ and $G \subseteq K_n$ be $n$-vertex graphs, and suppose that $F$ is guest-good with value $t$ and $G$ is host-$\beta$-good. Then there is a copy $F_0$ of $F$ in $K_n$ with
    $\big| |E(F_0) \cap E(G)| - \frac{e(F)}{2} \big| \geq \beta t$. 
\end{lemma}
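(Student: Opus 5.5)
We follow the switching strategy from the outline. Fix the partition $V(F)=U\cup V$ and the pairs $(u_i,u'_i)$, $i\in[m]$, given by guest-goodness, and the partition $V(G)=X\cup Y$ and pairs $(x_i,x'_i)$ given by host-$\beta$-goodness (use only the first $m\le 0.05n$ host pairs). Let $f$ be a uniformly random bijection with $f(U)=X$, $f(V)=Y$, $f(u_i)=x_i$, $f(u'_i)=x'_i$ for all $i$. For $S\subseteq[m]$ let $f_S$ be $f$ with the images of $u_i,u'_i$ swapped for all $i\in S$. The plan is to compare $|E(f_S(F))\cap E(G)|$ with $|E(f(F))\cap E(G)|$. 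If $|E(f(F))\cap E(G)|$ already deviates from $e(F)/2$ by $\beta t$ we are done. Otherwise it suffices to find $S$ for which the two counts differ by at least $2\beta t$.

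\textbf{Effect on $U$--$V$ edges.} For a single pair $i$, the change in the number of $G$-edges between $U$ and $V$ caused by the swap is
\[
\Delta_i=|f(A_i)\cap N_Y(x'_i)|-|f(A_i)\cap N_Y(x_i)|-|f(B_i)\cap N_Y(x'_i)|+|f(B_i)\cap N_Y(x_i)|,
\]
where $A_i=N_V(u_i)\setminus N_V(u'_i)$ and $B_i=N_V(u'_i)\setminus N_V(u_i)$. Since $f|_V$ is a uniform bijection onto $Y$, the sets $f(A_i)$ and $f(B_i)$ are random disjoint subsets of $Y$ of sizes $a_i\ge 0.01d_i$ and $b_i$, both at most $\tfrac23|Y|$. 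The mean of $\Delta_i$ is $(a_i-b_i)\,(d_Y(x'_i)-d_Y(x_i))/|Y|$. By guest condition 3 and host condition 1, this is $O(\sqrt{d_i}\cdot\beta)$ in absolute value.

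The key estimate is that, by host conditions 2 and 3 together with the size constraint, $f(A_i)$ samples from $N_Y(x_i)\triangle N_Y(x'_i)$, which has linear size and leaves a linear-size complement. A hypergeometric anti-concentration bound then gives
\[
\Pr\bigl[\Delta_i\ge c_0\sqrt{d_i}\bigr]\ge c_1
\]
for absolute constants $c_0,c_1$ once $\beta$ is small. Hence $\mathbb{E}\bigl[\sum_i \max(\Delta_i,0)\bigr]\ge c_0c_1 t$, while $\mathbb{E}\bigl[\sum_i\Delta_i\bigr]$ is only $O(\beta t)$ in absolute value. If $U$ is independent and condition 3 fails, the mean is not small; but then there are no $U$-edges, and we can simply take $S$ to be the set of $i$ with $\Delta_i>0$. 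The cross effects of swapping several pairs cancel: pairs are disjoint, and $V$ is fixed, so the changes on $U$--$V$ edges are additive over $i\in S$.

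\textbf{Effect on edges inside $U$.} Swapping also changes $G$-edges inside $U$, and these changes are not additive, since edges between two swapped pairs interact. We handle this by choosing $S$ randomly. Let $P=\{i:\Delta_i\ge c_0\sqrt{d_i}\}$, and include each $i\in P$ in $S$ independently with probability $1/2$. The total change is then $\sum_{i\in S}\Delta_i+Q(S)$, where $Q$ is the change inside $U$, a quadratic function of the indicator variables. Compare $S$ with its complement $P\setminus S$, or compare against $S=\emptyset$ and $S=P$, and take the best option. A second-moment / averaging argument shows that $Q$ cannot cancel the linear term $\sum_{i\in S}\Delta_i$, which has mean $\ge \tfrac12 c_0\sum_{i\in P}\sqrt{d_i}$, for all choices simultaneously. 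If $Q$ typically has magnitude $\ge \beta' t$, then two copies whose $U$-counts differ by that amount already differ enough, so one of them deviates from $e(F)/2$. This is the complementary case in the outline.

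The main obstacle is making this last dichotomy precise, showing that the linear and quadratic contributions cannot conspire to cancel. I would try to use the fact that the linear part is odd under swapping $S$ with $P\setminus S$ relative to a base, whereas a suitable symmetrisation isolates the quadratic part.
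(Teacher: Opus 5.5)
Your treatment of the $U$--$V$ edges is fine in outline, apart from an orientation issue noted at the end. But there is a genuine gap at exactly the step you flag: the change $Q(S)$ on edges inside $U$.

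Every tool you propose only varies $S$: a random $S\subseteq P$, comparing $S$ with $P\setminus S$, symmetrisation. No argument of that kind can work, because for a fixed embedding $f$ the cancellation can be exact for every $S$ at once. If no edge of $F$ joins two different pairs, then $Q(S)=\sum_{i\in S}\ell_i$ is linear. Nothing in the hypotheses prevents $\ell_i=-\Delta_i$ for all $i$, since guest-goodness says nothing about degrees inside $U$. In that case every $f_S(F)$ has the same number of $G$-edges. For the same reason your fallback (``if $Q$ is large, two copies whose $U$-counts differ by that amount already differ enough'') is unjustified. Those two copies also differ by $\sum_{i\in S}\Delta_i$ on the $U$--$V$ edges, and that is precisely what cancels $Q(S)$.

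The missing idea is that $Q(S)$ depends only on $f|_U$ and $S$, not on $f|_V$. So you should change the $V$-embedding rather than $S$.
\begin{itemize}
\item Fix an outcome of $f|_V$ with $\sum_{i\in P}\Delta_i\ge 3\sqrt{\beta}\,t$ (possible once $\beta\le (c_0c_1/3)^2$), and fix $S=P$ and $f|_U$.
\item If $Q(S)\ge -2\sqrt{\beta}\,t$, then $f(F)$ and $f_S(F)$ differ by at least $\sqrt{\beta}\,t\ge 2\beta t$, and you are done.
\item If $Q(S)\le -2\sqrt{\beta}\,t$, then $F[U]$ has edges, so guest condition 3 gives $|a_i-b_i|=|d_V(u_i)-d_V(u'_i)|\le 20\sqrt{d_i}$. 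Now replace $f|_V$ by a \emph{fresh} uniform bijection $V\to Y$, independent of $S$. This is necessary because $P$ was selected using $f|_V$, so conditionally on $S$ the $\Delta_i$ are biased upward. Keeping $S$ and $f|_U$, the expected $U$--$V$ change is $\sum_{i\in S}(a_i-b_i)(d_Y(x'_i)-d_Y(x_i))/|Y|$. This is at most $40\beta t\le\sqrt{\beta}\,t$ in absolute value; it is exactly the small-mean estimate you computed but never used. Hence some outcome has total change at most $-\sqrt{\beta}\,t$.
\end{itemize}
This one-sided dichotomy is the paper's proof.

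On orientation: the bound $\mathbb{P}[\Delta_i\ge c_0\sqrt{d_i}]\ge c_1$ does not hold for an arbitrary labelling of the pairs. When $U$ is independent and the degree condition fails, the mean of $\Delta_i$ can be of order $-\beta a_i$, far below $-\sqrt{a_i}$. So either orient each pair first, as the paper does: take $|A_i|\ge|B_i|$ and $|N_Y(x_i)\setminus N_Y(x'_i)|\ge|N_Y(x'_i)\setminus N_Y(x_i)|$, and measure the gain as no-swap minus swap. Or, in that additive case, take whichever of $\{i:\Delta_i>0\}$ and $\{i:\Delta_i<0\}$ gives the larger total.
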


For the proof of Lemma \ref{lem:main}, we need the probabilistic Lemma \ref{lem:sqrt deviation} below. For this lemma we in turn need the following four Lemmas \ref{lem:coupling}-\ref{lem: random matching}.
\begin{lemma}\label{lem:coupling}
Let $P,Q \subseteq [n]$ be disjoint subsets, let $1 \leq k \leq n$, and let $A \subseteq [n]$ be chosen uniformly at random among all sets of size $k$. Then
\begin{enumerate}
    \item If $|P| \geq |Q|$ then
    $\mathbb{P}\left[ |A \cap P| \geq |A \cap Q| \right] \geq \frac{1}{2}$.
    \item For all $s,t$, the events $|A\cap P|\geq s$ and $|A\cap Q|\leq t$ are positively correlated.
\end{enumerate}
\end{lemma}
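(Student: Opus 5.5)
Both parts rest on an exchange (coupling) argument. We work with the multivariate hypergeometric vector $(|A\cap P|, |A\cap Q|, |A\setminus (P\cup Q)|)$ and compare it with the vector obtained by swapping the roles of $P$ and $Q$.

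\emph{Part 1.} If $|P|=|Q|$, fix a bijection $\sigma$ of $[n]$ that swaps $P$ and $Q$ and fixes everything else. The set $\sigma(A)$ is again uniform among $k$-sets, and $|\sigma(A)\cap P| = |A\cap Q|$. Hence $\mathbb{P}[|A\cap P|\ge |A\cap Q|] = \mathbb{P}[|A\cap Q|\ge |A\cap P|]$. These two events together cover the whole space, so each has probability at least $\frac12$.

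If $|P|>|Q|$, pick $P'\subseteq P$ with $|P'|=|Q|$. Then $|A\cap P|\ge |A\cap P'|$ always holds, so $\{|A\cap P'|\ge|A\cap Q|\}\subseteq\{|A\cap P|\ge|A\cap Q|\}$. The equal-size case applied to $P'$ and $Q$ finishes part 1.

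\emph{Part 2.} We condition on $J:=|A\cap(P\cup Q)|$. Given $J=j$, the set $A\cap (P\cup Q)$ is a uniform $j$-subset of $P\cup Q$. Writing $X=|A\cap P|$, we have $|A\cap Q| = j - X$, and $X$ is hypergeometric with parameter $j$. So the event $\{|A\cap Q|\le t\}$ becomes $\{X\ge j-t\}$. Both events are therefore increasing in $X$, and two increasing events of a single real variable are positively correlated (the Harris/Chebyshev inequality on a line, since both are up-sets).

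We cannot simply average over $J$, because correlation does not mix: one needs covariance of the conditional probabilities. The cleaner route is to view $A$ as a uniform $k$-set and use the negative association of hypergeometric / uniform-$k$-set indicator variables (Joag-Dev–Proschan). The variables $|A\cap P|$ and $|A\cap Q|$ are nondecreasing functions of disjoint sets of indicators. Negative association then gives
\[
\mathbb{P}\big[|A\cap P|\ge s,\ |A\cap Q|\ge t+1\big]\le \mathbb{P}\big[|A\cap P|\ge s\big]\,\mathbb{P}\big[|A\cap Q|\ge t+1\big].
\]
Taking complements in the second event, this is exactly the claimed positive correlation of $\{|A\cap P|\ge s\}$ and $\{|A\cap Q|\le t\}$.

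If we want to avoid citing negative association, we can prove the needed inequality directly, and this is the step I expect to be the main obstacle. The plan is to show that the conditional law of $|A\cap Q|$ given $|A\cap P|=a$ is stochastically decreasing in $a$. That law is hypergeometric: we draw $k-a$ elements from the complement of $P$, which has size $n-|P|$, and count how many land in $Q$. Decreasing the number of draws produces a stochastically smaller count, via the obvious coupling that removes one random element from the drawn set. Hence $\mathbb{P}[|A\cap Q|\le t \mid |A\cap P|=a]$ is nondecreasing in $a$. Averaging this monotone function against the law of $|A\cap P|$, restricted to $\{a\ge s\}$ versus the whole space, gives the positive correlation.
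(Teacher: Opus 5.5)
Your proof is correct. Part 1 is the paper's argument; you only make the symmetry explicit via the swap $\sigma$.

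For Part 2, your self-contained argument uses the same key idea as the paper. Conditioned on $|A\cap P|=a$, the set $A\setminus P$ is a uniform $(k-a)$-subset of $[n]\setminus P$. These sets can be coupled to be nested, so $|A\cap Q|$ is stochastically nonincreasing in $a$. The two proofs differ only in how the nesting is realized and how the conditioning on $\{|A\cap P|\ge s\}$ is handled:
\begin{itemize}
\item The paper fixes a random ordering $v_1,\dots,v_{n-|P|}$ of $[n]\setminus P$ and takes the prefix of length $k-|A\cap P|$. It then uses rejection sampling (minimal $j$ with $s_j\ge s$) to couple the conditioned and unconditioned sets so that $A'_j\cap Q\subseteq A'_1\cap Q$ pathwise.
\item You instead note that $a\mapsto \mathbb{P}[|A\cap Q|\le t \mid |A\cap P|=a]$ is nondecreasing. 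You then use that two nondecreasing functions of the single variable $|A\cap P|$ are positively correlated (Chebyshev's inequality on a line).
\end{itemize}
The two are interchangeable. Yours avoids the rejection-sampling device, while the paper's gives an explicit pathwise coupling.

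Your negative-association route is a genuinely different and shorter argument, at the cost of citing Joag-Dev--Proschan. In return it gives more: negative covariance for any pair of monotone functions of disjoint blocks of indicators. The complementation step there is correct; for non-integer $t$, replace $t+1$ by $\lfloor t\rfloor+1$. The opening paragraph about conditioning on $J$ is a dead end that you rightly abandon and can simply delete.
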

\begin{proof}
    For Item 1, fix $P' \subseteq P$ with $|P'| = |Q|$. It is enough to show that $|A \cap P'| \geq |A \cap Q|$ with probability at least $\frac{1}{2}$. But by symmetry, the events $\mathcal{A} = \{|A \cap P'| \geq |A \cap Q|\}$
    and 
    $\mathcal{B} = \{|A \cap P'| \leq |A \cap Q|\}$ have the same probability, and $\mathbb{P}[\mathcal{A}] + \mathbb{P}[\mathcal{B}] \geq 1$. The claim follows. 

    For Item 2, we need to show that 
    $\mathbb{P}[|A\cap Q|\leq t\mid |A \cap P|\geq s]\geq \mathbb{P}[|A \cap Q|\leq t]$. Let us consider the selection of $A$ from the following perspective. First, we fix a uniformly-at-random ordering ${v_1,\ldots,v_{n-|P|}}$ of $[n]\setminus P$. Then, for $i=1,2,\ldots$, let $A_i\subseteq [n]$ be a uniformly-at-random set of size $k$ and set $s_i=|A_i\cap P|$. Let $A_i'=(A_i\cap P)\cup\{v_1,\ldots,v_{k-s_i}\}$ (so that $|A_i'| = k)$. Note that the distribution of $A_i'$ is the same as the distribution of $A$, that is, it is a uniformly-at-random subset of $[n]$ of size $k$. Let $j$ be minimal such that $s_j\geq s$. Then, $A_j'$ follows the same distribution as $A$ conditioned on $|A \cap P|\geq s$. Note that $s_j\geq s_1$, and that 
    $A_j' \cap Q = \{v_1,\dots,v_{k-s_j}\} \cap Q$ and $A_1' \cap Q = \{v_1,\dots,v_{k-s_1}\} \cap Q$. 
    Therefore, $A_j'\cap Q\subseteq A_1'\cap Q$, implying that 
    $\mathbb{P}[|A_j'\cap Q|\leq t]\geq 
    \mathbb{P}[|A_1'\cap Q|\leq t]$. Since 
    $A_j'$ follows the same distribution as $A$ conditioned on $|A \cap P|\geq s$, while $A_1'$ follows the same distribution as $A$, we arrive at the desired inequality.
\end{proof}
\noindent
The next lemma includes well-known facts about the binomial distribution.
\begin{lemma}\label{lem:binomial}
Let $n \geq 1$ and let $Z \sim \Bin(n,\frac{1}{2})$.
\begin{enumerate}
    \item $\mathbb{P}[Z \geq \frac{n}{2}] \geq \frac{1}{2}$.
    \item For every $\alpha > 0$ there is $\beta > 0$ such that if $n$ is large enough, then $\mathbb{P}\left[ Z \geq \frac{n}{2} + \beta \sqrt{n} \right] \geq \frac{1}{2}-\alpha$.
\end{enumerate}
\end{lemma}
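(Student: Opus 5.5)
Item 1 follows from the symmetry $Z \mapsto n - Z$. Since $n - Z$ has the same distribution as $Z$, we get $\mathbb{P}[Z \geq \frac{n}{2}] = \mathbb{P}[Z \leq \frac{n}{2}]$. These two events together cover the whole probability space, so their probabilities sum to at least $1$, and each is at least $\frac{1}{2}$. This is the same argument used in Item 1 of Lemma \ref{lem:coupling}.

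For Item 2, I would write
$$
\mathbb{P}\Big[Z \geq \tfrac{n}{2} + \beta\sqrt{n}\Big] \geq \mathbb{P}\Big[Z \geq \tfrac{n}{2}\Big] - \mathbb{P}\Big[\tfrac{n}{2} \leq Z < \tfrac{n}{2} + \beta\sqrt{n}\Big].
$$
By Item 1, it then suffices to show that the probability that $Z$ lies in a window of length $\beta\sqrt{n}$ is at most $\alpha$. The window contains at most $\beta\sqrt{n} + 1$ integers. Each point mass of $\Bin(n,\frac{1}{2})$ is at most the central binomial probability $\binom{n}{\lfloor n/2 \rfloor}2^{-n}$. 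By Stirling's formula, or the elementary bound $\binom{2k}{k} \leq 4^k/\sqrt{\pi k}$, this is at most $C/\sqrt{n}$ for an absolute constant $C$ (one can take $C = 1$ for large $n$).

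Hence the window has probability at most $(\beta\sqrt{n} + 1)\,C/\sqrt{n} = C\beta + C/\sqrt{n}$. Choosing $\beta = \alpha/(2C)$ and taking $n \geq (2C/\alpha)^2$ makes this at most $\alpha$, which gives the claim.

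The only step with any content is the uniform bound $O(1/\sqrt{n})$ on the largest point mass. I would prove it directly: the largest point mass is the central term, because the binomial coefficients are unimodal. For the central term itself I would quote the standard estimate $\binom{2k}{k} \leq 4^k/\sqrt{\pi k}$, handling odd $n$ via $\binom{n}{\lfloor n/2 \rfloor} \leq \binom{n+1}{(n+1)/2}/2 \cdot 2$ or by a direct computation.

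As an alternative, the central limit theorem gives the same conclusion: $\mathbb{P}[Z \geq \frac{n}{2} + \beta\sqrt{n}] \to \mathbb{P}[N(0,1) \geq 2\beta]$, and this limit exceeds $\frac{1}{2} - \alpha/2$ for small $\beta$. The local bound above is more elementary and self-contained, so I would present that one.
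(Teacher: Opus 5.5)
Your proof is correct. Item 1 is the paper's argument (the symmetry $\binom{n}{k}=\binom{n}{n-k}$). For Item 2 you take a genuinely different route.

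The paper simply appeals to the central limit theorem. You instead split $\{Z \ge n/2\}$ into $\{Z \ge n/2+\beta\sqrt{n}\}$ and a window containing at most $\beta\sqrt{n}+1$ integers. You then bound each point mass by the central term $\binom{n}{\lfloor n/2\rfloor}2^{-n} \le C/\sqrt{n}$.

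The CLT argument is a one-liner but purely qualitative: an explicit threshold for ``$n$ large enough'' would need something like Berry--Esseen. Yours is self-contained and fully explicit. Since $\binom{2k}{k}4^{-k}\le 1/\sqrt{\pi k}$, and for odd $n$ we have $\binom{n}{(n-1)/2}2^{-n}=\binom{n+1}{(n+1)/2}2^{-(n+1)}$, one may take $C=1$ for every $n\ge 1$. Hence $\beta=\alpha/2$ works for all $n\ge 4/\alpha^2$, and the argument needs only Item 1 rather than the limiting Gaussian tail.

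One small point: your odd-$n$ formula ``$\binom{n+1}{(n+1)/2}/2 \cdot 2$'' is garbled as written. The clean identity is $\binom{n}{(n-1)/2}=\frac12\binom{n+1}{(n+1)/2}$. Even the cruder reading would only change $C$ by a constant factor, so nothing breaks.
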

\begin{proof}
    The first item follows from the fact that $\binom{n}{k} = \binom{n}{n-k}$.
    The second item follows from the central limit theorem. 
\end{proof}

The following is an anti-concentration statement for the hypergeometric distribution. We suspect that such a result is also known (perhaps even in a stronger form), but could not find a reference, and hence give a proof. As the proof is via a somewhat tedious calculation, we defer it to the appendix. 
\begin{lemma}\label{lem:hypergeometric}
For every $\eta > 0$, there exists $k_0 = k_0(\eta)$ such that the following holds. 
Let $k \geq k_0$ and $n$ be integers with $k \leq (1-\eta)n$, let $P \subseteq [n]$, put $p = |P|/n$, and suppose that $\eta \leq p \leq 1-\eta$. Let $A$ be a subset of $[n]$ of size $k$ chosen uniformly at random. 
Then it holds that
$\mathbb{P}[|A \cap P| \geq pk + 0.1\eta \sqrt{k}] \geq 0.04\eta$ and 
$
\mathbb{P}[|A \cap P| \leq pk - 0.1\eta \sqrt{k}] \geq 0.04\eta. 
$
\end{lemma}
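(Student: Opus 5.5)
Write $X = |A \cap P|$, a hypergeometric random variable with population $n$, $|P| = pn$ successes and $k$ draws. Its mean is $pk$ and its variance is
\[
\sigma^2 = k\,p(1-p)\,\frac{n-k}{n-1}.
\]
Since $\eta \le p \le 1-\eta$ and $k \le (1-\eta)n$, we have $\sigma^2 \ge \eta(1-\eta)\cdot \eta\, k \ge \eta^2 k/2$ (for $\eta \le 1/2$; the case $\eta > 1/2$ is vacuous). The target deviation $0.1\eta\sqrt{k}$ is therefore at most a small constant multiple of $\sigma$, roughly $\sigma/7$. So it suffices to show that $X$ exceeds its mean by about $0.15\sigma$ with probability bounded below by a constant times $\eta$, and symmetrically below.

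Applying a local CLT to the hypergeometric distribution would give a $\Theta(1)$ probability directly, but this requires uniformity in $n$ and $k$. Instead I would use a moment argument (Paley--Zygmund style), which avoids the CLT. Set $Y = X - pk$. Then $\mathbb{E}Y = 0$ and $\mathbb{E}Y^2 = \sigma^2$. The fourth central moment of the hypergeometric distribution satisfies $\mathbb{E}Y^4 \le C\sigma^4$ whenever $\sigma^2 \ge 1$; this follows, for example, from the known closed formula or from the fact that the hypergeometric is a sum of negatively associated indicators, whose moments are dominated by the binomial ones (Hoeffding's comparison for convex functions). The binomial bound $\mathbb{E}Y^4 \le 3\sigma_{\mathrm{bin}}^4 + \sigma_{\mathrm{bin}}^2$ is then combined with $\sigma_{\mathrm{bin}}^2 = kp(1-p) \le \sigma^2/\eta$, which gives $\mathbb{E}Y^4 \le C\sigma^4/\eta^2$.

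Next use $\mathbb{E}Y^+ = \mathbb{E}Y^- = \tfrac12 \mathbb{E}|Y|$, together with the Hölder bound $\mathbb{E}|Y| \ge (\mathbb{E}Y^2)^{3/2}/(\mathbb{E}Y^4)^{1/2} \ge c\,\eta\,\sigma$. Let $a = 0.1\eta\sqrt{k} \le c'\eta\sigma$, with the constant chosen small compared to $c$. Then
\[
\mathbb{E}\big[Y^+\mathbf{1}_{Y \ge a}\big] \ge \mathbb{E}Y^+ - a \ge \tfrac{c}{4}\eta\sigma .
\]
By Cauchy--Schwarz,
\[
\mathbb{E}\big[Y^+\mathbf{1}_{Y \ge a}\big] \le \sigma\,\mathbb{P}[Y \ge a]^{1/2},
\]
so $\mathbb{P}[Y \ge a] \ge (c\eta/4)^2$. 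The same argument applies to $Y^-$.

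The main obstacle is that this yields probability of order $\eta^2$, while the claim asks for $0.04\eta$. To recover linear dependence on $\eta$, I would avoid the lossy comparison with the binomial and bound the fourth moment directly, via the exact hypergeometric formula: its kurtosis is $3 + O(1/\sigma^2)$ uniformly in the parameters. This gives $\mathbb{E}Y^4 \le 4\sigma^4$ once $k \ge k_0(\eta)$, and hence $\mathbb{E}|Y| \ge \sigma/2$ with constants independent of $\eta$. With $a = 0.1\eta\sqrt{k} \le 0.15\sigma$, the argument above then gives $\mathbb{P}[Y \ge a] \ge (0.25 - 0.15)^2 \ge 0.01$. This bound is actually stronger than $0.04\eta$ for small $\eta$, and the constants can be tuned (e.g.\ using the sharper bound $a \le 0.1\sigma/\sqrt{1-\eta}$) to cover all $\eta$. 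I expect checking the uniform kurtosis bound to be the main technical work.
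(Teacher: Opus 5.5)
Your final route is correct, and it differs from the paper's. The paper proves a local estimate. Using Stirling's formula, it shows $\mathbb{P}[|A\cap P|=t]\ge 0.14\sqrt{\eta/(p(1-p)k)}$ for every integer $t$ within $0.5\sqrt{p(1-p)\min(k,n-k)}$ of $pk$. It then sums this bound over the roughly $0.3\sqrt{p(1-p)\eta k}$ integers $t$ with $t-pk\in[0.2\sqrt{p(1-p)\eta k},\,0.5\sqrt{p(1-p)\eta k}]$, and symmetrically below the mean.

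You instead use only moments up to the fourth, via these steps:
\begin{itemize}
\item the bound $\mathbb{E}Y^4\le(3+o(1))\sigma^4$;
\item H\"older's inequality, giving $\mathbb{E}|Y|\ge\sigma^3/(\mathbb{E}Y^4)^{1/2}$;
\item the identity $\mathbb{E}Y^+=\mathbb{E}Y^-=\tfrac12\mathbb{E}|Y|$;
\item a Cauchy--Schwarz (Paley--Zygmund) step;
\item the variance bound $\sigma\ge\eta\sqrt{(1-\eta)k}$.
\end{itemize}

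The paper's argument is self-contained but computation-heavy, and its final bound $0.04\eta$ is linear in $\eta$. Yours is shorter and proves more: a deviation of order $\sigma$ occurs with probability bounded below by an absolute constant. In exchange, it outsources the uniform fourth-moment bound. That bound is true. The cleanest justification is that the hypergeometric law is a Poisson-binomial law, since its generating polynomial is real-rooted. For such a law, $\mathbb{E}Y^4=3\sigma^4+\sum_i q_i(1-q_i)(1-6q_i(1-q_i))\le 3\sigma^4+\sigma^2$. Alternatively, you can check it from the exact excess-kurtosis formula by discarding its negative terms and using $\sigma^2\le n^2/(16(n-1))$.

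Two corrections to your write-up.

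First, you misdiagnose the binomial-comparison route. It does not give a probability of order $\eta^2$; for small $\eta$ it gives nothing. The step $a\le c'\eta\sigma$ is false in general: $a/\sigma$ can be as large as $0.1/\sqrt{1-\eta}$ however small $\eta$ is, for instance when $p=\eta$ and $n-k\approx\eta n$. Meanwhile, that route only guarantees $\mathbb{E}Y^+\gtrsim\eta\sigma$. So the direct fourth-moment bound is necessary, not just a refinement.

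Second, your constants do not cover all $\eta\le 1/2$.
\begin{itemize}
\item With $\mathbb{E}Y^4\le 4\sigma^4$ you get $\mathbb{P}[Y\ge a]\ge(1/4-0.1/\sqrt{1-\eta})^2$. This falls below $0.04\eta$ once $\eta$ exceeds roughly $0.38$; at $\eta=0.4$ it is about $0.0146<0.016$.
\item So the tuning you propose, using $a\le 0.1\sigma/\sqrt{1-\eta}$, does not suffice on its own.
\item What works is the sharp kurtosis $3+o(1)$ that you already quoted. It gives $\mathbb{E}Y^+\ge(1/(2\sqrt3)-o(1))\sigma$, hence $\mathbb{P}[Y\ge a]\ge(0.2887-0.1/\sqrt{1-\eta}-o(1))^2$.
\item At the worst case $\eta=1/2$ this is about $0.0217>0.02$, and $k_0(\eta)$ absorbs the $o(1)$.
\end{itemize}
The paper's application only needs $\eta=0.01$, where your stated constants already suffice.
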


\noindent
Finally, the following is the last auxiliary lemma we need for the proof of Lemma \ref{lem:sqrt deviation} below.
\begin{lemma}\label{lem: random matching}
    For every $\eta>0$, there exists $k = k_0(\eta)$ such that for all $k \geq k_0$ and $n \geq 2k$, the following holds. Let $P\subseteq [n]$ with $\eta\leq |P|/n\leq 1-\eta$ and let $M$ be a matching of size $k$ in $[n]$, chosen uniformly at random. With probability at least $\frac{5}{6}$, the number of edges $uv\in M$ with $u\in P$ and $v\notin P$ is at least $\eta k/25$.
\end{lemma}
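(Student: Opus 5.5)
The plan is a second-moment (Chebyshev) argument on the number $X$ of edges of $M$ that cross between $P$ and $[n]\setminus P$. I would not expose the matching edge by edge: since $n$ may equal $2k$, the last edges are chosen from very few remaining vertices, so a martingale or sequential argument loses control at the end. Chebyshev needs only the first two moments, and these depend only on the joint law of one or two edges of $M$. By symmetry, each edge of a uniformly random matching of size $k$ is a uniform pair in $[n]$. Likewise, each two distinct edges form a uniformly random ordered pair of disjoint pairs.

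Write $a=|P|$ and $b=n-a$, so $\eta n\le a,b\le (1-\eta)n$. Let $X=\sum_{e\in M}I_e$, where $I_e$ indicates that $e$ crosses. Then
$$\mathbb{P}[I_e=1]=\frac{2ab}{n(n-1)}\ \ge\ 2\eta(1-\eta)\ \ge\ \eta,$$
using $\eta\le 1/2$, which is forced by $\eta\le |P|/n\le 1-\eta$. Hence $\mathbb{E}[X]\ge \eta k$.

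For the variance, I would bound the covariance of $I_e$ and $I_f$ for distinct edges $e,f\in M$. The four endpoints form a uniform ordered $4$-tuple of distinct vertices, so
$$\mathbb{P}[I_e=I_f=1]=\frac{4a(a-1)b(b-1)}{n(n-1)(n-2)(n-3)}.$$
I would compare this with $\left(\frac{2ab}{n(n-1)}\right)^2$. Expanding shows the difference is at most $C/n$ for an absolute constant $C$. Indeed, the ratio of the two expressions is $\frac{(a-1)(b-1)n(n-1)}{ab(n-2)(n-3)}=1+O(1/n)$, because $a,b\ge \eta n$, and the product of marginals is at most $1$. More carefully, the $-1$ terms in $a-1$ and $b-1$ only decrease the joint probability, while the $(n-2)(n-3)$ denominator contributes only $1+O(1/n)$. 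So we even have $\mathrm{Cov}\le C/n$ with $C$ absolute. Therefore
$$\mathrm{Var}(X)\le k + k^2\cdot \frac{C}{n}\le (1+C/2)\,k,$$
using $k\le n/2$.

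Finally, Chebyshev gives
$$\mathbb{P}\left[X<\tfrac{\eta k}{25}\right]\le \mathbb{P}\left[|X-\mathbb{E}X|\ge \tfrac{24}{25}\eta k\right]\le \frac{(1+C/2)k}{(0.96\,\eta k)^2}=O\!\left(\frac{1}{\eta^2 k}\right),$$
which is at most $1/6$ once $k\ge k_0(\eta)$ is large enough. The only step needing care is the covariance estimate: it must be $O(1/n)$, not merely $O(1)$, so that the $k^2$ pairs contribute only $O(k)$ to the variance. This uses $k\le n/2$ and the lower bound $a,b\ge \eta n$. Since $n\ge 2k\ge 2k_0$, all the $1+O(1/n)$ approximations are valid.
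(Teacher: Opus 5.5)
Your second-moment argument is correct, and it takes a genuinely different route from the paper's.

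The paper generates the uniform matching by a coupling. It chooses disjoint random $k$-sets $X,Y$, lists $X\cap P$ first, and matches these vertices one at a time to uniformly random unused vertices of $Y$. On the typical event $|X\cap P|\ge \eta k/2$, $|Y\setminus P|\ge k/3$ (hypergeometric concentration, after assuming $|P|\le n/2$), each of the first $\eta k/2\le k/4$ steps creates a crossing edge with probability at least $1/12$. So the count dominates $\Bin(\eta k/2,1/12)$, and Chernoff plus a union bound finish.

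You instead use only the one- and two-edge marginals of $M$. After labelling its edges uniformly at random, these are a uniform pair and a uniform ordered pair of disjoint pairs. You get $\mathbb{E}X=\frac{2abk}{n(n-1)}\ge \eta k$ exactly, and you bound each pair covariance by $O(1/n)$ via $(1-\frac{1}{a})(1-\frac{1}{b})\le 1$ and $\frac{n(n-1)}{(n-2)(n-3)}=1+O(1/n)$. Then $k\le n/2$ gives $\mathrm{Var}\,X=O(k)$, and Chebyshev closes.

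What each route buys:
\begin{itemize}
\item Your route is shorter and needs no coupling, hypergeometric tail bounds or stochastic domination. It also proves more: $X$ is within $O(\sqrt{k})$ of its mean with probability at least $5/6$, so the threshold $\eta k/25$ could be raised to $(2\eta(1-\eta)-o(1))k$.
\item The paper's route gives exponential tails and hence $k_0=O(1/\eta)$, instead of your $O(1/\eta^2)$.
\end{itemize}
Neither difference matters for the application in Lemma~\ref{lem:sqrt deviation}, which only needs constant probability once $a$ is large in terms of $\eta$.

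Your motivating remark that sequential exposure ``loses control at the end'' is not a real obstacle: the paper sidesteps it by exposing only the first $\eta k/2\le k/4$ edges. This remark plays no role in your proof.
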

\begin{proof}
Note that we may assume $\eta\leq \frac 12$ and, without loss of generality, $|P|\leq n/2$. Let $X,Y\subseteq [n]$ be disjoint sets of size $k$ chosen uniformly at random. Let $M$ be the matching between $X$ and $Y$ obtained by the following process. Let $X=\{x_1,\ldots,x_k\}$, where we order the vertices such that $P\cap X=\{x_1,\ldots,x_{|P\cap X|}\}$. Then, for $1\leq i\leq k$, let $y_i$ be chosen uniformly at random from $Y\setminus\{y_1,\ldots,y_{i-1}\}$ and add $x_iy_i$ to $M$. Note that $M$ is a perfect matching between $X$ and $Y$ chosen uniformly at random. Since $X$ and $Y$ are chosen uniformly at random themselves, it follows that $M$ is chosen uniformly at random among all matchings of size $k$ in $[n]$.

Using standard results on the hypergeometric distribution, we get that, with probability at least $11/12$, both $|P\cap X|\geq\eta k/2$ and $|Y\setminus P|\geq k/3$. Suppose this is the case. Then, for $1\leq i\leq \eta k/2$, $y_i\notin P$ with probability at least $1/12$, as $\eta k/2\leq k/4$. We say that an edge $uv\in M$ is good if $u\in P$ and $v\notin P$. By the above argument, it follows that if $|P\cap X|\geq\eta k/2$ and $|Y\setminus P|\geq k/3$ then the number of good edges in $M$ stochastically dominates $\Bin(\eta k/2, 1/12)$. In this case, by the Chernoff bound we get that $M$ contains at least $\eta k/25$ good edges with probability at least $11/12$.

By a union bound, we conclude $M$ contains at least $\eta k/25$ good edges with probability at least $1-2\cdot1/12=5/6$. 
\end{proof}
\begin{lemma}\label{lem:sqrt deviation}
    For every $\eta > 0$ there exists 
    $\rho = \rho_{\ref{lem:sqrt deviation}}(\eta) > 0$ such that the following holds for all large enough $n$. 
    Let $P,Q \subseteq [n]$ be disjoint subsets, put $p = |P|/n, q = |Q|/n$, and suppose that $p \geq q$ and $\eta \leq p \leq 1-\eta$. 
    Let $0 \leq b \leq a\leq (1-\eta)n$, and let
    $A,B \subseteq [n]$ be disjoint sets with $|A|= a, |B| =b$, chosen uniformly at random among all such pairs of subsets of $[n]$. 
    Let $Z := |A \cap P| - |A \cap Q| - |B \cap P| + |B \cap Q|$. Then $\mathbb{P}[Z \geq \rho \sqrt{a}| \geq \rho$. 
\end{lemma}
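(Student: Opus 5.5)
We have to prove that $Z = |A\cap P| - |A\cap Q| - |B\cap P| + |B\cap Q|$ satisfies $Z \ge \rho\sqrt{a}$ with probability at least $\rho$. The plan is to expose the pair $(A,B)$ in stages. First sample $S = A\cup B$, a uniform set of size $a+b$. Then split $S$ into $A$ and $B$ using a uniform random matching, and use Lemma~\ref{lem:hypergeometric} and Lemma~\ref{lem:coupling} to get anti-concentration. Write $Z = Z_A - Z_B$, where $Z_A = |A\cap P|-|A\cap Q|$ and $Z_B=|B\cap P|-|B\cap Q|$.

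\emph{Small $b$.} If $b \le \delta a$ for a small $\delta=\delta(\eta)$, work directly with $A$. Since $a\le(1-\eta)n$ and $\eta\le p\le 1-\eta$, Lemma~\ref{lem:hypergeometric} gives
\[
|A\cap P|\ge pa+0.1\eta\sqrt a
\]
with probability at least $0.04\eta$. Conditional on $|A\cap P|=s$, the set $A\setminus P$ is uniform in $[n]\setminus P$. Its intersection with $Q$ then has mean $(a-s)q/(1-p)$, and $(a-s)q/(1-p) \le qa + \tfrac{q}{1-p}\cdot(\text{stuff}) \le pa$ up to lower order, using $q\le p$ and $q\le 1-p$. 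Its median is within $O(1)$ of this mean (the claim for hypergeometric laws I would rely on). So with conditional probability at least about $1/2$ we have $|A\cap Q|\le pa - (s-pa)\cdot\tfrac{q}{1-p}+O(1)$, and in that case $Z_A \ge s-pa \gtrsim 0.1\eta\sqrt a$. To make this clean I would instead invoke Lemma~\ref{lem:coupling}(2) on the events $|A\cap P|\ge s$ and $|A\cap Q|\le t$ with $t$ the unconditional median of $|A\cap Q|$. This gives probability at least $0.02\eta$ that $Z_A\ge (p-q)a+0.1\eta\sqrt a - O(1)\ge 0.05\eta\sqrt a$. Finally $|Z_B|\le$ the size of $B\cap(P\cup Q)$, which by Markov is at most $\tfrac{1}{\rho'}b$. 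That is not small enough unless $b\ll\sqrt a$. So the case $b\le\delta a$ should really be handled the same way as the general case below, and I would drop this split unless it is needed.

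\emph{General case via a matching.} Pick $A'\subseteq A$ of size $a-b$, so that $S$ is partitioned into $A'$, together with $b$ pairs $\{x_i,y_i\}$ with $x_i\in A\setminus A'$ and $y_i\in B$. Conditioned on $S$ and on the unordered pairs, each pair is oriented independently and uniformly. Then
\[
Z = Z_{A'} + \sum_{i=1}^b \sigma_i\, w_i ,
\]
where $\sigma_i\in\{\pm1\}$ are independent signs and $w_i = g(x_i)-g(y_i)\in\{0,\pm1,\pm2\}$, with $g=\mathbb 1_P-\mathbb 1_Q$.

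\emph{Case $b \ge \delta a$.} Lemma~\ref{lem: random matching} (applied inside $S$, which still has $P$-density in $[\eta/2,1-\eta/2]$ w.h.p.) gives at least $\eta b/50$ pairs with $w_i\neq 0$, with probability at least $5/6$. Condition on everything except the signs, and on the event $Z_{A'}\ge 0$. By Lemma~\ref{lem:coupling}(1), or its analogue for the relevant sizes, this event has probability at least about $1/2$ when $a-b\ge1$, and it is trivial if $a=b$. Then $\sum\sigma_i w_i$ is symmetric, and by Lemma~\ref{lem:binomial}(2) (or Paley–Zygmund/CLT for Rademacher sums with $\Omega(b)$ nonzero terms of size at most $2$) it exceeds $\beta'\sqrt{\eta b}\ge \beta'\sqrt{\eta\delta a}$ with probability at least $1/3$. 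The total probability is then about $\tfrac{5}{6}\cdot\tfrac12\cdot\tfrac13-o(1)$.

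\emph{Case $b<\delta a$.} Now $A'$ is a uniform set of size $a-b\ge(1-\delta)a$. Lemma~\ref{lem:hypergeometric} together with the coupling argument above gives $Z_{A'}\ge c\eta\sqrt a$ with probability $\Omega(\eta)$. The signs are independent of $A'$ given the pairs, and by symmetry $\sum\sigma_iw_i\ge0$ with probability at least $1/2$. So $Z\ge c\eta\sqrt a$ with probability $\Omega(\eta)$. The same argument also covers tiny $a$ once $\rho$ is chosen small: for $a<k_0$ we only need $Z\ge1$, which happens with probability bounded below in terms of $\eta$ and $k_0$.

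\emph{Main obstacle.} The delicate step is deriving a $\sqrt a$ deviation of $Z_{A'}$ when $q$ is close to $p$, since then $|A'\cap Q|$ can fluctuate in concert with $|A'\cap P|$. Lemma~\ref{lem:coupling}(2) is what handles this: it lets us combine an upward deviation of $|A'\cap P|$ with a non-upward deviation of $|A'\cap Q|$ below its median. Since $q\le p$, that median is at most $qa'+O(1)\le pa'+O(1)$, which gives $Z_{A'}\ge 0.1\eta\sqrt{a'}-O(1)$ with probability at least $0.04\eta\cdot\tfrac12$.
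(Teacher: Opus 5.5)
Your proposal is correct and is essentially the paper's proof. You use the same sampling of $(A,B)$: a uniform $A'$ of size $a-b$ plus a uniform $b$-matching whose edges are oriented independently at random. You also use the same two cases: when $b$ is comparable to $a$, Lemma \ref{lem: random matching}, Lemma \ref{lem:coupling}(1) and anti-concentration of the sign sum; when $|A'|\ge(1-\delta)a$, Lemma \ref{lem:hypergeometric}, Lemma \ref{lem:coupling}(2) and symmetry of the sign sum.

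Two small points. First, the event on $M$ and the event $Z_{A'}\ge 0$ are dependent, since $A'$ must avoid $V(M)$; combine them by a union bound (giving probability at least $\frac13$) rather than multiplying $\frac56\cdot\frac12$. Second, instead of relying on the median of a hypergeometric law being within $O(1)$ of its mean, you can do as the paper does: $|A'\cap Q|$ is stochastically dominated by $|A'\cap P|$, so the lower-tail half of Lemma \ref{lem:hypergeometric} bounds $\mathbb{P}[|A'\cap Q|\le p|A'|]$.
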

\begin{proof}
    We assume that $a$ is large enough as a function of $\eta$. Otherwise, with some positive probability lower-bounded solely by a function of $\eta$, it holds that $A \subseteq P$ and $B \subseteq [n] \setminus P$, in which case $Z \geq a \geq \rho \sqrt{a}$. Thus, choosing $\rho$ small enough handles this case.\footnote{We note that in the regime $1 \ll a \ll \sqrt{n}$, the lemma can be proved more easily by sampling $A,B$ with replacement instead of without (as there are typically no collisions when $a \ll \sqrt{n}$) and using the central limit theorem.}

    Instead of sampling $A$ and $B$ directly, consider the following process to sample $A$ and $B$ according to the same distribution. First, let $M$ be a uniformly-at-random matching on $[n]$ of size $b$ and let $A'\subseteq [n]$ be a 
    uniformly-at-random set of size $a-b$ disjoint from $V(M)$. Then, for each edge $e = uv\in M$, let $a_e$ be a uniformly-at-random vertex of $\{u,v\}$. Let $A_M=\{a_e : e\in M\}$ and $B_M=V(M)\setminus A_M$. Finally, set $A=A'\cup A_M$ and $B=B_M$. Note that $A'$ is distributed as a uniformly-random subset of $[n]$ of size $a-b$. 
    
    Later, we distinguish two cases. In both of these we use the following notation and observations. Let $Z':=|A'\cap P|-|A'\cap Q|$ and $Z_M:=|A_M \cap P| - |A_M \cap Q| - |B_M \cap P| + |B_M \cap Q|$ and note that $Z=Z'+Z_M$. Let us partition $M$ into $M_0,M_1,M_2$ and $M_2'$, where $M_0$ contains all the edges with neither endpoint in $P\cup Q$; $M_1$ contains the edges with exactly one endpoint in $P\cup Q$; $M_2$ contains the edges with one endpoint in $P$ and one in $Q$; and $M_2'$ contains the edges with either both endpoints in $P$ or both in $Q$. For $e\in M_0\cup M_2'$, the choice of $a_e$ does not change $Z_M$. For $e\in M_1$ (resp.~$e \in M_2$), the choice of $a_e$ changes $Z_M$ by $2$ (resp.~$4$). So, $Z_M$ is distributed as $2X+4Y$, where $X\sim \Bin(|M_1|,\frac{1}{2})-\frac{1}{2}|M_1|$ and $Y\sim\Bin(|M_2|,\frac{1}{2})-\frac{1}{2}|M_2|$.
    We now move on to the case distinction.
    
    \paragraph{Case 1:} $a\leq 2b$. Let $m$ denote the number of edges in $M$ which intersect $P$ in exactly one vertex. 
    Note that $m \leq |M_1| + |M_2|$.
    As $M$ is a 
    uniformly-at-random matching of size $b$ on $[n]$, by Lemma~\ref{lem: random matching}, 
    $m\geq \eta b/25\geq \eta a/50 =: t$ with probability at least $\frac{5}{6}$. 
    We may apply Lemma~\ref{lem: random matching} because $b \geq a/2$ and we assumed that $a$ is large enough as a function of $\eta$. 
    As $A'$ is a subset of $[n]$ of size $a-b$ chosen uniformly at random, and $p\geq q$, with probability at least $\frac{1}{2}$ we have $Z'\geq 0$ (by Item 1 of Lemma \ref{lem:coupling}). Thus, with probability at least $\frac{1}{3}$, both $m\geq t$ and $Z'\geq 0$ hold. Suppose this is the case. As 
    $m\geq t$, it follows that either $|M_1|\geq t/2$ or $|M_2|\geq t/2$. 
    Therefore, by Lemma \ref{lem:binomial}, we have $Z_M=2X+4Y\geq \rho\sqrt{a}$ with probability at least $\frac{1}{5}$, say. Indeed, assuming that $|M_1| \geq t/2$ (the case $|M_2| \geq t/2$ is identical), we have
    $X \geq \rho \sqrt{a}$ with probability at least $0.49$, say (by Item 2 of Lemma \ref{lem:binomial}), and $Y \geq 0$ with probability at least $0.5$ (by Item 1 of Lemma \ref{lem:binomial}). 
    Altogether, we get that $m\geq t$ and $Z'\geq 0$ with probability at least $\frac{1}{3}$, and that conditioned on this, $Z_M\geq \rho\sqrt{a}$ with probability at least $\frac{1}{5}$. Hence, with probability at least $\frac{1}{15}$ we have $Z=Z'+Z_M\geq \rho\sqrt{a}$.

    \paragraph{Case 2:} $a>2b$. So $|A'| = a-b > \frac{a}{2}$ is assumed to be large enough as a function of $\eta$. 
    By Item 2 of Lemma \ref{lem:coupling}, we get 
    $$
    \mathbb{P}[|A'\cap P|\geq pa+\rho\sqrt{a} \text{ and } |A'\cap Q|\leq pa]\geq
    \mathbb{P}[|A'\cap P|\geq pa+\rho\sqrt{a}]\cdot
    \mathbb{P}[|A'\cap Q|\leq pa].\footnote{The event $\mathbb{P}[|A'\cap Q|\leq pa]$ may seem strange, since the expected value of $|A' \cap Q|$ is $qa$ and not $pa$. The reason we are considering this event is that we would like to invoke Lemma \ref{lem:hypergeometric} and avoid proving additional statements for the hypergeometric distribution. We cannot invoke this lemma for $|A' \cap Q|$ because $Q$ may be too small. However, $|A' \cap Q|$ is stochastically dominated by $|A' \cap P|$, and we may invoke the lemma for $|A' \cap P|$.}
    $$
    By Lemma \ref{lem:hypergeometric}, we have 
    $\mathbb{P}[|A'\cap P|\geq pa+\rho\sqrt{a}] \geq \mathbb{P}\left[ |A' \cap P| \geq 0.1\eta \sqrt{a-b} \right] \geq 0.04\eta$. 
    Also, as $|P| \geq |Q|$, we have 
    $\mathbb{P}[|A' \cap Q| \leq pa] \geq 
    \mathbb{P}[|A' \cap P| \leq pa] \geq 0.04\eta$, again using Lemma \ref{lem:hypergeometric}. Altogether, with probability at least $\frac{1}{625}\eta^2$, we have $|A' \cap P| \geq pa + \rho \sqrt{a}$ and $|A' \cap Q| \leq pa$, in which case 
    $Z' \geq \rho\sqrt{a}$. 
    Also, recall that $Z_M=2X+4Y$ and that
    $\mathbb{P}[X\geq 0],\mathbb{P}[Y\geq 0]\geq \frac{1}{2}$ (by Item 1 of Lemma \ref{lem:binomial}), no matter the choice of $M$. Hence,  
    we get that $\mathbb{P}[Z_M\geq 0]\geq \frac{1}{4}$. Altogether, $Z=Z'+Z_M\geq \rho \sqrt{a}$ with probability at least $\frac{1}{2500}\eta^2 \geq \rho$. 
\end{proof} 

\noindent
We are now ready to prove Lemma \ref{lem:main}.

\begin{proof}[Proof of Lemma \ref{lem:main}]
As $F$ is guest-good with value $t$, there is a partition $V(F) = U \cup V$ with $|U| = \lfloor \frac{n}{2} \rfloor$, and there are distinct vertices $u_1,u'_1,\dots,u_m,u'_m \in U$, $m \leq 0.05n$, and integers $d_1,\dots,d_m \geq 1$ with $\sum_{i=1}^m \sqrt{d_i} = t$, such that 
$|N_V(u_i) \setminus N_V(u'_i)| \geq 0.01d_i$
and 
$|N_V(u_i) \setminus N_V(u'_i)|, 
|N_V(u'_i) \setminus N_V(u_i)| \leq 
\frac{2}{3}|V|$ 
for every $i \in [m]$, and such that either $U$ is independent in $F$ or $|d_V(u_i) - d_V(u'_i)| \leq 20\sqrt{d_i}$ for every $i \in [m]$. 
For convenience, put $V_i := N_V(u_i) \setminus N_V(u'_i)$ and $V'_i := N_V(u'_i) \setminus N_V(u_i)$, so that $|V_i| \geq 0.01 d_i$ and $|V_i|,|V'_i| \leq \frac{2}{3}|V|$. Without loss of generality, we may assume that $|V_i| \geq |V'_i|$ (else switch $u_i$ and $u'_i$). 

As $G$ is host-$\beta$-good, there is a partition $V(G) = X \cup Y$ with $|X| = \lfloor \frac{n}{2} \rfloor$, and there are distinct vertices $x_1,x'_1,\dots,x_m,x'_m \in X$ such that for every $i \in [m]$ we have 
$|d_Y(x_i) - d_Y(x'_i)| \leq \beta n$, 
$0.02|Y| \leq |N_Y(x_i) \triangle N_Y(x'_i)| \leq 0.98|Y|$, and $0.1|Y| \leq d_Y(x_i),d_Y(x'_i) \leq 0.9|Y|$. 
Similarly to the above, put $Y_i := N_Y(x_i) \setminus N_Y(x'_i)$ and 
$Y'_i := N_Y(x'_i) \setminus N_Y(x_i)$, and assume without loss of generality that $|Y_i| \geq |Y'_i|$ (else switch $x_i$ and $x'_i$); so 
$|Y_i| \geq 0.01|Y|$.

Take a uniformly at random bijection $g : V \rightarrow Y$. 
For $1 \leq i \leq m$, let
\begin{equation}\label{eq:good pair}
\begin{split}
	D_i &:= |g(V_i) \cap N_Y(x_i)| - |g(V_i) \cap N_Y(x'_i)| - |g(V'_i) \cap N_Y(x_i)| + |g(V'_i) \cap N_Y(x'_i)| \\ \\ &=  
    |g(V_i) \cap Y_i| - |g(V_i) \cap Y'_i| - 
    |g(V'_i) \cap Y_i| + |g(V'_i) \cap Y'_i|.
\end{split}
\end{equation}
Let $\rho = \rho_{\ref{lem:sqrt deviation}}(0.01)$ be given by Lemma \ref{lem:sqrt deviation} with parameter $\eta = 0.01$.
We say that the pair $u_iu'_i$ is {\em good} if 
$D_i \geq 0.1\rho \sqrt{d_i}$. 

\begin{claim}\label{claim:good pair probability}
    For every $1 \leq i \leq m$, 
    $\mathbb{P}[u_iu'_i \text{ good}] \geq \rho$.    
\end{claim}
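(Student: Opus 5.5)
We reduce Claim~\ref{claim:good pair probability} to Lemma~\ref{lem:sqrt deviation}, applied with $\eta = 0.01$ and ground set $Y$ (so with $|Y|$ in place of $n$). Let $P := Y_i$ and $Q := Y'_i$. These are disjoint subsets of $Y$, and we arranged that $|Y_i| \ge |Y'_i|$. Since $g : V \to Y$ is a uniformly random bijection, and $V_i, V'_i \subseteq V$ are disjoint, the pair $(A,B) := (g(V_i), g(V'_i))$ is a uniformly random pair of disjoint subsets of $Y$ of sizes $a := |V_i|$ and $b := |V'_i|$. Moreover, by \eqref{eq:good pair}, $D_i$ is exactly the quantity $Z = |A\cap P| - |A\cap Q| - |B\cap P| + |B \cap Q|$ of Lemma~\ref{lem:sqrt deviation}.

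Next we check the hypotheses of Lemma~\ref{lem:sqrt deviation}.
\begin{itemize}
\item We have $b \le a$ by the normalization $|V_i| \ge |V'_i|$.
\item We have $a \le \frac{2}{3}|V| \le (1-\eta)|Y|$, using Item~2 of Definition~\ref{def:guest} and $|V| = |Y|$.
\item For $p = |Y_i|/|Y|$, the lower bound comes from Item~2 of Definition~\ref{def:host}: $|Y_i| + |Y'_i| \ge 0.02|Y|$ and $|Y_i| \ge |Y'_i|$ together give $p \ge 0.01 = \eta$.
\item For the upper bound, $Y_i \subseteq N_Y(x_i)$ and $d_Y(x_i) \le 0.9|Y|$ (Item~3 of Definition~\ref{def:host}) give $p \le 0.9 \le 1-\eta$.
\item The condition $p \ge q$ is exactly $|Y_i| \ge |Y'_i|$.
\item Finally, $|Y| \ge n/2$ is large.
\end{itemize}
Lemma~\ref{lem:sqrt deviation} therefore gives $\mathbb{P}[D_i \ge \rho\sqrt{a}] \ge \rho$ with $\rho = \rho_{\ref{lem:sqrt deviation}}(0.01)$.

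It remains to compare $\sqrt{a}$ with $\sqrt{d_i}$. By Item~1 of Definition~\ref{def:guest}, $a = |V_i| \ge 0.01 d_i$, so $\rho\sqrt{a} \ge 0.1\rho\sqrt{d_i}$. Hence the event $D_i \ge \rho\sqrt a$ implies $D_i \ge 0.1\rho\sqrt{d_i}$, which is precisely the definition of $u_iu'_i$ being good. This gives $\mathbb{P}[u_iu'_i \text{ good}] \ge \rho$.

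There is no real obstacle here. The only points needing care are two checks: that the images of $V_i$ and $V'_i$ under the random bijection have exactly the joint distribution required by Lemma~\ref{lem:sqrt deviation}, and that the density bounds on $Y_i$ fall inside $[\eta, 1-\eta]$, which uses both Items~2 and~3 of host-goodness. We also note that Lemma~\ref{lem:sqrt deviation} covers small $a$ internally, so no separate case is needed for small $d_i$.
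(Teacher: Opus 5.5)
Your proposal is correct and follows the paper's proof essentially step for step. You apply Lemma~\ref{lem:sqrt deviation} on the ground set $Y$ with $P = Y_i$, $Q = Y'_i$, $\eta = 0.01$, check the same hypotheses (density bounds $0.01 \le p \le 0.9$, $p \ge q$, $b \le a \le \tfrac{2}{3}|Y|$), and then pass from $\rho\sqrt{a}$ to $0.1\rho\sqrt{d_i}$ using $a \ge 0.01 d_i$.
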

\begin{proof}
    We apply Lemma \ref{lem:sqrt deviation} with $Y$ in place of $[n]$ and with $P := Y_i$, $Q := Y'_i$. Indeed, note that $(A,B) := (g(V_i),g(V'_i))$ is distributed as a random pair of subsets of $Y$ of sizes $|A| = a := |V_i|$ and $|B| = b := |V'_i|$. Also, setting $p = |P|/|Y|$ and $q = |Q|/|Y|$, we have $p \geq 0.01$ because $|Y_i| \geq 0.01|Y|$; 
    $p \leq 0.9$ because $|Y_i| \leq |N_Y(x_i)| \leq 0.9|Y|$; and $p \geq q$ because $|Y_i| \geq |Y'_i|$.
    Also, $a = |V_i| \geq |V'_i| = b$; $a \geq 0.01d_i$; and 
    $a \leq \frac{2}{3}|V| = \frac{2}{3}|Y|$.
    Note that the expression $|A \cap P| - |A \cap Q| - |B \cap P| + |B \cap Q|$ in Lemma \ref{lem:sqrt deviation} is exactly the bottom line in \eqref{eq:good pair}. 
    Hence,
    by Lemma \ref{lem:sqrt deviation}, 
    $\mathbb{P}[D_i \geq 0.1\rho \sqrt{d_i}] \geq 
    \mathbb{P}[D_i \geq \rho\sqrt{a}] \geq \rho$.
    This proves the claim.
\end{proof}

Let $Z$ be the random variable
$$
Z := \sum_{i=1}^m D_i \cdot \mathds{1}_{u_iu'_i \text{ is good}}
$$
By Claim \ref{claim:good pair probability} and linearity of expectation, we have 
$\mathbb{E}[Z] \geq 
\sum_{i=1}^m 0.1\rho^2 \sqrt{d_i} = 0.1\rho^2 t \geq 3\sqrt{\beta}t$, where the last inequality holds if $\beta$ is small enough. 
Hence, there is an outcome for $g$ 
such that $Z \geq 3\sqrt{\beta} t$. Fix such an outcome $g$, and suppose without loss of generality that for this $g$, the pair $u_iu'_i$ is good if and only if $i \leq k$ (for some $k$). So 
\begin{equation}\label{eq:contribution of good pairs}
Z = \sum_{i=1}^k D_i \geq 3\sqrt{\beta} t.
\end{equation}

Fix a bijection $h_1 : U \rightarrow X$ satisfying 
$h_1(u_i) = x_i$ and $h_1(u'_i) = x'_i$ for every $1 \leq i \leq k$. 
Let $\sigma : U \rightarrow U$ be the permutation of $U$ satisfying $\sigma(u_i) = u'_i, \sigma(u'_i) = u_i$ for every $1 \leq i \leq k$ and $\sigma(w) = w$ for every $w \in U \setminus \{u_i,u'_i : 1 \leq i \leq k\}$. 
In other words, $\sigma$ switches $u_i,u'_i$ for $1 \leq i \leq k$, and keeps all other points of $U$ in place. Set $h_2 := h_1 \circ \sigma$; so $h_2$ is also a bijection from $U$ to $X$. 
Note that $h_1(u_i) = x_i, h_1(u'_i) = x'_i$ and $h_2(u_i) = x'_i, h_2(u'_i) = x_i$ for $1 \leq i \leq k$, and that $h_1(w) = h_2(w)$ for all 
$w \in U \setminus \{u_i,u'_i : 1 \leq i \leq k\}$.  
Let $H := F[U]$. 
For $i=1,2$, let $H_i$ be the copy of $H$ in $K_n$ given by $h_i$; namely $H_i = \{h_i(e) : e \in E(H)\}$.
We consider two cases.

\paragraph{Case 1:} $|E(H_1) \cap E(G)| - |E(H_2) \cap E(G)| \geq - 2\sqrt{\beta} t$.
	In this case, we consider the following two copies of $F$ in $K_n$. 
	For $i = 1,2$, let $f_i := h_i \cup g$; so $f_i : V(F) \rightarrow V(K_n)$ is a bijection. Let $F_i$ be the copy of $F$ given by $f_i$, i.e., $F_i = \{f_i(e) : e \in E(F)\}$. Note that $H_i \subseteq F_i$. We claim that
	\begin{equation}\label{eq:F_1,F_2. Case 1}
	|E(F_1) \cap E(G)| - |E(F_2) \cap E(G)| = 
    |E(H_1) \cap E(G)| - |E(H_2) \cap E(G)| + \sum_{i=1}^k D_i.
	\end{equation}
	Indeed, let us consider the contribution of different types of edges to 
    $S := |E(F_1) \cap E(G)| - |E(F_2) \cap E(G)|$. As $F_1,F_2$ contain the same edges inside $Y$, these edges do not contribute to $S$. Also, the contribution of the edges contained in $X$ is $|E(H_1) \cap E(G)| - |E(H_2) \cap E(G)|$. Let us now consider the edges between $X$ and $Y$. For an edge $e \in E(F)$ with one end in $V$ and one in $U \setminus \{u_iu'_i : 1 \leq i \leq k\}$, we have $f_1(e) = f_2(e)$ (so $e$ is in both $F_1$ and $F_2$), hence these edges again do not contribute to $S$. Finally, let $1 \leq i \leq k$, and consider the edges between $x_i,x'_i$ and $Y$. For $v \in V$, if 
    $v \in N_V(u_i) \cap N_V(u'_i)$ then 
    $x_ig(v), x'_ig(v)$ are edges of both $F_1$ and $F_2$, again giving no contribution to $S$. If 
    $v \in N_V(u_i) \setminus N_V(u'_i) = V_i$, then 
    $x_ig(v) \in E(F_1)$ while $x'_ig(v) \in E(F_2)$. Hence, $v$ contributes 
    $\mathds{1}_{g(v) \in N_Y(x_i)} - 
    \mathds{1}_{g(v) \in N_Y(x'_i)}$ to 
    $S$. This means that the vertices in $V_i$ contribute
	$|g(V_i) \cap N_Y(x_i)| - |g(V_i) \cap N_Y(x'_i)|$ to $S$. Similarly, the vertices of $V'_i$ contribute 
	$|g(V'_i) \cap N_Y(x'_i)| - |g(V'_i) \cap N_Y(x_i)|$ to $S$. Thus, the contribution of the edges between $u_i,u'_i$ and $V$ is 
	$D_i$ (see Equation \ref{eq:good pair}). This proves \eqref{eq:F_1,F_2. Case 1}. 
    Now, by \eqref{eq:F_1,F_2. Case 1} and \eqref{eq:contribution of good pairs}, we have 
    $$|E(F_1) \cap E(G)| - |E(F_2) \cap E(G)| = 
    |E(H_1) \cap E(G)| - |E(H_2) \cap E(G)| + Z \geq -2\sqrt{\beta} t + 3 \sqrt{\beta} t \geq 2\beta t,$$ assuming that $\beta$ is small enough. Here, the first inequality also uses the assumption of Case 1. 
    As $|E(F_1) \cap E(G)| - |E(F_2) \cap E(G)| \geq 2\beta t$, there exists $i \in \{1,2\}$ with 
    $||E(F_i) \cap E(G)| - \frac{e(F)}{2}| \geq \beta t$, as required. This completes the proof in Case 1.

    \paragraph{Case 2:}
	$|E(H_1) \cap E(G)| - |E(H_2) \cap E(G)| \leq -2\sqrt{\beta} t$. 
    This in particular means that $U$ is not independent in $F$, because else we would have $E(H) = \emptyset$ and, hence, $E(H_1),E(H_2) = \emptyset$.
    By the definition of guest-goodness, we have 
    $|d_V(u_i) - d_V(u'_i)| \leq 20\sqrt{d_i}$ for all $i \in [m]$.
    Now consider the following quantity:
	\begin{equation}
	T := \frac{1}{|Y|}\sum_{i=1}^k \left( d_V(u_i) - d_V(u'_i) \right) \cdot \left( d_Y(x_i) - d_Y(x'_i) \right).
	\end{equation}
    where the degrees $d_V(u_i),d_V(u'_i)$ are in $F$, while $d_Y(x_i),d_Y(x'_i)$ are in $G$.
	For every $1 \leq i \leq k$, we have 
    $|d_V(u_i) - d_V(u'_i)| \leq 20\sqrt{d_i}$ and
    $|d_Y(x_i) - d_Y(y_i)| \leq \beta n$.
    Hence, 
	\begin{equation}\label{eq:T is small}
	T \leq \frac{1}{|Y|} \cdot \beta n \cdot \sum_{i=1}^k 20\sqrt{d_i} \leq 
    40\beta \cdot \sum_{i=1}^m \sqrt{d_i} 
    = 40\beta t
    \leq \sqrt{\beta} t. 
	\end{equation}
	
	Consider a second uniformly at random bijection $g' : V \rightarrow Y$. For $i=1,2$, let $f_i = h_i\cup g'$ and let $F_i$ be the copy of $F$ given by $f_i$, i.e., $F_i = \{f_i(e) : e \in E(F_i)\}$. We claim that
	\begin{equation}\label{eq:F_1,F_2. Case 2}
	\mathbb{E}[|E(F_1) \cap E(G)| - |E(F_2) \cap E(G)|] = 
    |E(H_1) \cap E(G)| - |E(H_2) \cap E(G)| + T.
	\end{equation}
	To see this, we again consider the contribution of different types of edges to the expression $S := |E(F_1) \cap E(G)| - |E(F_2) \cap E(G)|$. As $F_1,F_2$ have the same edges inside $Y$, these edges do not contribute to $S$. Also, the edges inside $X$ contribute $|E(H_1) \cap E(G)| - |E(H_2) \cap E(G)|$. Next we consider the edges between $X$ and $Y$. For an edge $e \in E(F)$ with one endpoint in $V$ and one in 
    $U \setminus \{u_i,u'_i : 1 \leq i \leq k\}$, we have $f_1(e) = f_2(e)$, so such edges belong to both $F_1$ and $F_2$ and hence do not contribute to $S$. Next, consider edges between $V$ and $u_i,u'_i$ for $1 \leq i \leq k$. For an edge $u_iv$ with $v \in V$, the probability that 
	$x_ig'(v) = f_1(u_iv)$ belongs to $G$ is 
    $d_Y(x_i)/|Y|$, as $g'(v)$ is a uniformly random element of $Y$. Similarly, the probability that 
    $x'_ig'(v) = f_2(u_iv)$ belongs to $G$ is 
    $d_Y(x'_i)/|Y|$. Hence, by linearity of expectation, the edges between $u_i$ and $V$ contribute 
	$
	d_V(u_i) \cdot \frac{1}{|Y|} \left( d_Y(x_i) - d_Y(x'_i) \right)
	$ 
	to $\mathbb{E}[S]$. 
	Similarly, the edges between $u'_i$ and $V$ contribute 
	$
	d_V(u'_i) \cdot \frac{1}{|Y|} \left( d_Y(x'_i) - d_Y(x_i) \right)
	$ 
	to $\mathbb{E}[S]$. Summing over all $i \in [k]$ proves \eqref{eq:F_1,F_2. Case 2}. 
	
	By the assumption of Case 2, we have 
    $|E(H_1) \cap E(G)| - |E(H_2) \cap E(G)| \leq -2\sqrt{\beta} t$, and by \eqref{eq:T is small} we have 
    $T \leq \sqrt{\beta} t$. Hence, by \eqref{eq:F_1,F_2. Case 2}, 
    $\mathbb{E}[|E(F_1) \cap E(G)| - |E(F_2) \cap E(G)|] \leq -\sqrt{\beta}t \leq -2\beta t$. Fixing an outcome of $g'$ with $|E(F_1) \cap E(G)| - |E(F_2) \cap E(G)| \leq -2\beta t$, there exists $i \in \{1,2\}$ such that
    $||E(F_i) \cap E(G)| - \frac{e(F)}{2}| \geq \beta t$, as required.
\end{proof}

\subsection{Setting up the host graph}\label{subsec:host}
Thanks to Lemma \ref{lem:cut discrepancy}, we will be able to assume that in the host graph $G$, all bisections have the ``correct" number of edges. This suffices for $G$ to be host-$\beta$-good, as shown by the following lemma.
\begin{lemma}\label{lem:host-good}
    For every $\beta > 0$ there is $\gamma = \gamma_{\ref{lem:host-good}}(\beta) > 0$ such that the following holds for all large enough $n$. Let $G$ be an $n$-vertex graph such that for every partition $V(G) = X \cup Y$ with 
    $|X| = \lfloor \frac{n}{2} \rfloor$, it holds that 
    $|e(X,Y) - \frac{1}{2}|X||Y|| \leq \gamma n^2$. Then $G$ is host-$\beta$-good.
\end{lemma}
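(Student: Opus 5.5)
The plan is to show that the hypothesis forces $G$ to be quasirandom of density $\frac12$ in the cut-norm sense. With that in hand, the three conditions of Definition~\ref{def:host} can be read off for an arbitrary bisection $X \cup Y$, and in fact with room to spare.

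\emph{Step 1: density.} Averaging $e(X,Y)$ over a uniformly random bisection gives $\mathbb{E}[e(X,Y)] = e(G)\cdot |X||Y|/\binom{n}{2}$. Since every bisection satisfies $|e(X,Y) - \frac12|X||Y|| \le \gamma n^2$, this yields $|e(G)/\binom n2 - \frac12| = O(\gamma)$.

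\emph{Step 2: from bisections to all cuts.} Write $x \in \{\pm 1\}^n$ for the sign vector of a bisection, so that $e(X,Y) = \frac14\sum_{ij \in E(G)}(1 - x_ix_j)$. Put $M := A_G - \frac12(J - I)$. Combined with Step~1, the hypothesis says that $|x^T M x| = O(\gamma n^2)$ for every balanced sign vector $x$.

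I would then polarize. Given disjoint sets $S,T$, I would choose balanced vectors $x,y,z,w$ that agree outside $S \cup T$ and differ in a prescribed pattern on $S$ and $T$. The idea is that an alternating combination of the four quadratic forms isolates the bilinear term $\mathbf 1_S^T M \mathbf 1_T$ up to lower-order terms. Balance can be maintained by compensating on a small auxiliary set, or by splitting $S$ and $T$ into halves. Any error from non-balanced pieces can be absorbed by decomposing into $O(1)$ pieces of size about $n/4$ and using the triangle inequality.

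The target of this step is
$$\Big|e(S,T) - \tfrac12|S||T|\Big| \le C\sqrt{\gamma}\, n^2 \quad\text{for all disjoint } S,T \subseteq V(G),$$
or any bound of the form $o_\gamma(1) n^2$. I expect this step to be the main obstacle, since the balancing constraint is what makes the polarization fiddly. A fallback is the Chung--Graham route: bisection-quasirandomness at density $\frac12$ is known to be equivalent to the other quasirandom properties.

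\emph{Step 3: degrees and codegrees.} Fix any bisection $X \cup Y$. Let $S \subseteq X$ be the set of vertices $x$ with $d_Y(x) > (\frac12 + \delta)|Y|$. Applying Step~2 to $(S,Y)$ shows $|S| = O(\sqrt\gamma/\delta)\,n$, and the same holds for low degrees. So all but $o(n)$ vertices of $X$ satisfy $d_Y(x) = (\frac12 \pm \delta)|Y|$.

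For symmetric differences, I would use the cut bound to show that the number of labelled $4$-cycles, or of paths $x\,y\,x'$ with $x,x' \in X$ and $y \in Y$, is close to the random count. Alternatively, I would apply Step~2 to the pair $(N_Y(x), X)$: for a typical $x$, the sets $N_Y(x)$ and $Y \setminus N_Y(x)$ each have size about $|Y|/2$, so most $x'$ have about $|Y|/4$ neighbours in each of them. Either way, a Cauchy--Schwarz argument gives that all but $o(n^2)$ pairs $x,x' \in X$ have $|N_Y(x) \cap N_Y(x')| = (\frac14 \pm \delta)|Y|$, and hence $|N_Y(x) \triangle N_Y(x')| = (\frac12 \pm 3\delta)|Y|$.

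\emph{Step 4: choosing the pairs.} Choose $\delta \ll \beta$, and then $\gamma$ small enough in terms of $\delta$. Greedily select disjoint pairs $x_i,x'_i$ among the typical vertices of $X$ whose pair is also typical. Each chosen pair rules out only $O(n)$ further choices, while the atypical pairs number $o(n^2)$, so $0.05n$ pairs can be found.

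Each chosen pair then satisfies the three conditions: $|d_Y(x_i) - d_Y(x'_i)| \le 2\delta |Y| \le \beta n$; $|N_Y(x_i) \triangle N_Y(x'_i)| \in [0.02, 0.98]|Y|$; and $d_Y(x_i), d_Y(x'_i) \in [0.1, 0.9]|Y|$. This shows $G$ is host-$\beta$-good.
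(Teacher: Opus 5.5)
\textbf{Your Step 2 is false, and Steps 3--4 depend on it.} The bisection hypothesis does not force quasirandomness.

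Here is a counterexample. Take $V(G)=H\cup L$ with $|H|=|L|=n/2$. Let $H$ be a clique, $L$ an independent set, and put a random bipartite graph of density $\frac12$ between $H$ and $L$. For a bisection with $a=|X\cap H|$,
\[
e(X,Y)=a\bigl(\tfrac n2-a\bigr)+\tfrac12a^2+\tfrac12\bigl(\tfrac n2-a\bigr)^2+O(n^{3/2})=\tfrac{n^2}{8}+O(n^{3/2}).
\]
So with high probability the hypothesis holds for every fixed $\gamma$ once $n$ is large. Yet $e(S,T)=|S||T|$ for all disjoint $S,T\subseteq H$, so no bound of the form $o_\gamma(1)n^2$ is possible.

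The structural reason is that for every balanced sign vector $x$,
\[
x^T(\mathbf 1v^T+v\mathbf 1^T)x=2(\mathbf 1^Tx)(v^Tx)=0 .
\]
Hence the hypothesis is blind to perturbations $A_{ij}\approx\frac12+v_i+v_j$, i.e.\ to degree irregularity. The values $x^TMx$ on balanced $x$ depend only on the compression of $M$ to $\mathbf 1^{\perp}$. Polarization can therefore yield at most signed combinations like $e(S_1,T_1)-e(S_1,T_2)-e(S_2,T_1)+e(S_2,T_2)$ with $|S_1|=|S_2|$ and $|T_1|=|T_2|$. It never yields $e(S,T)-\frac12|S||T|$, and no balancing or compensation trick changes this.

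The Chung--Graham--Wilson fallback also fails. Their quasirandom property concerns $e(S)$ for all $|S|=\lfloor n/2\rfloor$, whereas cut balance only controls $e(X)+e(Y)$. The difference $e(X)-e(Y)=\frac12\bigl(\sum_{x\in X}d(x)-\sum_{y\in Y}d(y)\bigr)$ is unconstrained unless $G$ is nearly regular. In the example, take a typical bisection ($|X\cap H|\approx n/4$). Every $x\in X\cap H$ has $d_Y(x)\approx\frac34|Y|$, every $x\in X\cap L$ has $d_Y(x)\approx\frac14|Y|$, and pairs in $X\cap H$ have $|N_Y(x)\cap N_Y(x')|\approx\frac58|Y|$. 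This contradicts both claims of your Step 3.

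\textbf{What is missing is an argument that tolerates arbitrary degree structure.} This is exactly why condition 1 of host-goodness asks only that $d_Y(x_i)$ and $d_Y(x'_i)$ be close to each other, not close to $|Y|/2$. The paper proceeds as follows.
\begin{enumerate}
\item It takes a random bisection with $d_X(v)=d_Y(v)+o(n)$ for every $v$.
\item It uses the hypothesis only through the swap inequality $|e(X',Y')-e(X')-e(Y')|\le 2\gamma n^2$ for $X'\subseteq X$, $Y'\subseteq Y$, $|X'|=|Y'|$. This is precisely the second-difference information that survives. It also uses the crude bound $|X_{\text{high}}|<0.6|X|$.
\item A density double count between $X_{\text{high}}$ and $X_{\text{low}}$ shows one of them has size at most $0.25|X|$, so $|X_{\text{med}}|\ge 0.15|X|$.
\item The medium vertices are bucketed by $d_Y$ into intervals of length $\beta n$.
\item Among any $\sqrt{160\gamma}\,n$ medium vertices there is a pair with $|N_Y(x)\triangle N_Y(x')|\in[0.02,0.98]|Y|$. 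Otherwise the neighbourhoods cluster around some $Y_1$ or $Y\setminus Y_1$, and the swap inequality forces $|e(Y'_1)-e(Y'_2)|>\frac12|X'|^2$, which is impossible for sets of size $|X'|$.
\item Greedy extraction within buckets then gives $0.05n$ pairs.
\end{enumerate}
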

\begin{proof}
	We may assume that $\beta$ is small enough and that $1/\beta$ is an integer.
	Take a random partition $V(G) = X_0 \cup Y_0$ by placing each vertex in one of the parts $X_0,Y_0$ with probability $\frac{1}{2}$, independently. By the Chernoff bound and a union bound over the vertices, w.h.p. it holds that $|d_{X_0}(v) - d_{Y_0}(v)| = o(n)$ for every vertex $v \in V(G)$. Moreover, w.h.p. $|X_0|,|Y_0| = (\frac{1}{2} + o(1))n$. Fix a partition $X_0,Y_0$ such that these two conditions are satisfied. Move $o(n)$ vertices between the sets $X_0,Y_0$ to get a partition $V(G) = X \cup Y$ with $|X| = \lfloor n/2 \rfloor$. We still have $|d_X(v) - d_Y(v)| = o(n)$ for every \nolinebreak vertex \nolinebreak $v$. 

    	Let $X_{\text{high}} = \{x \in X : d_Y(x) > 0.9|Y|\}$, 
	$X_{\text{low}} = \{x \in X : d_Y(x) < 0.1|Y|\}$. First we claim that
	$|X_{\text{high}}| \leq 0.25|X|$ or 
	$|X_{\text{low}}| \leq 0.25|X|$. Indeed, suppose towards a contradiction that
	$|X_{\text{high}}|,|X_{\text{low}}| \geq 0.25|X|$. 
	For every $x \in X_{\text{high}}$, we have $d_X(x) \geq d_Y(x) - o(n) \geq (0.9 - o(1))|X|$, and therefore, 
    $d_{X_{\text{low}}}(x) \geq 
    |X_{\text{low}}| - (0.1+o(1))|X| > 
    0.5|X_{\text{low}}|$, where the last inequality uses that $|X_{\text{low}}| \geq 0.25|X|$.
	It follows that $d(X_{\text{high}},X_{\text{low}}) > 0.5$. 			
	Symmetrically, for every $x \in X_{\text{low}}$ we have
	$d_{X_{\text{high}}}(x) \leq d_X(x) \leq d_Y(x) + o(n) \leq (0.1 + o(1))|X| < 0.5|X_{\text{high}}|$, which implies that
	$d(X_{\text{high}},X_{\text{low}}) < 0.5$, a contradiction.
	This proves our claim.
	
	Without loss of generality, assume that $|X_{\text{low}}| \leq 0.25|X|$; otherwise consider the complement $\overline{G}$ in place of $G$, noting that if $\overline{G}$ is host-$\beta$-good then so is $G$. Observe that $|X_{\text{high}}| < 0.6|X|$, because otherwise 
	$e(X,Y) \geq 0.6|X| \cdot 0.9|Y| = 0.54|X||Y|$, contradicting the assumption of the lemma. Now, setting 
	$X_{\text{med}} := X \setminus (X_{\text{high}} \cup X_{\text{low}})$, we have
	$|X_{\text{med}}| \geq (1 - 0.25 - 0.6)|X| \geq 0.15|X|$. By definition, for every $x \in X_{\text{med}}$ we have $0.1|Y| \leq d_Y(x) \leq 0.9|Y|$.

    Now, we establish the following property of the partition $(X,Y)$.
    \begin{claim}\label{claim:balanced partitions switching}
    For all $X' \subseteq X$ and $Y' \subseteq Y$ with $|X'| = |Y'|$, we have
    $|e(X',Y') - e(X') - e(Y')| \leq 2\gamma n^2$.
    \end{claim}
    \begin{proof}
        Let $(Z,W)$ be the partition of $V(G)$ obtained from $(X,Y)$ by moving $X'$ to $Y$ and $Y'$ to $X$; namely, $Z = (X \setminus X') \cup Y'$ and $W = (Y \setminus Y') \cup X'$. Clearly, $|Z| = |X| = \lfloor n/2 \rfloor$. Moreover, we have
		\begin{equation}\label{eq:cut property lemma, 1}
		e(Z,W) - e(X,Y) = e(X',X\setminus X') + e(Y', Y \setminus Y') - e(X',Y \setminus Y') - e(Y', X \setminus X').
		\end{equation}
		Also, using that $d_X(x) = d_Y(x) +o(n)$ for all $x\in X$,
        $$
        e(X',X\setminus X') = \sum_{x \in X'} d_X(x) - 2e(X') = \sum_{x \in X'}d_Y(x) + o(n^2) - 2e(X') = e(X',Y) + o(n^2) - 2e(X').
		$$
		Similarly,
		$e(Y',Y \setminus Y') = e(Y',X) + o(n^2) - 2e(Y')$.
		Plugging this into \eqref{eq:cut property lemma, 1}, we get
		\begin{align*}\label{eq:cut property lemma, 2}
		e(Z,W) - e(X,Y) &= \nonumber e(X',Y) - 2e(X') + e(Y',X) - 2e(Y') - e(X',Y \setminus Y') - e(Y', X \setminus X') + o(n^2)
		\\ &= 
		2(e(X',Y') - e(X') - e(Y')) + o(n^2).
		\end{align*}
		On the other hand, 
        $|e(Z,W) - e(X,Y)| \leq 2\gamma n^2$,
        since each of $e(X,Y),e(Z,W)$ is within $\gamma n^2$ from $\frac{1}{2}\lfloor \frac{n}{2} \rfloor \lceil \frac{n}{2} \rceil$, 
        by the assumption of the lemma. Hence,
        $|e(X',Y') - e(X') - e(Y')| \leq 2\gamma n^2$, as required.
    \end{proof}

    \noindent
    Next we prove:
	
	\begin{claim}
		For every subset $X^* \subseteq X_{\text{med}}$ with $|X^*| \geq \sqrt{160\gamma }n$, there exist $x,x' \in X^*$ satisfying 
        $0.02|Y| \leq |N_Y(x) \triangle N_Y(x')| \leq 0.98 |Y|$.
	\end{claim}
	
	\begin{proof}
		Suppose not. Fix $x_0 \in X^*$, and 
        set $Y_1 := N_Y(x_0), Y_2 = Y \setminus Y_1$.
        As $x_0 \in X_{\text{med}}$, we have 
        $|Y_1|,|Y_2| \geq 0.1|Y|$.
        By assumption, for every $x \in X^*$ it holds that 
        $|N_Y(x) \triangle Y_1| = |N_Y(x) \triangle N_Y(x_0)| \leq 0.02|Y|$, or 
        $|N_Y(x) \triangle Y_1| \geq 0.98|Y|$ and hence
        $|N_Y(x) \triangle Y_2| \leq 0.02|Y|$. 
        The two cases $|N_Y(x) \triangle Y_i| \leq 0.02 |Y|$ for $i=1,2$ are symmetric, so let us assume, without loss of generality, that at least half of the vertices $x \in X^*$ satisfy 
        $|N_Y(x) \triangle Y_1| \leq 0.02 |Y|$. 
        Let $X' \subseteq X^*$ be a set of $\sqrt{40\gamma}n$ such vertices (such a set $X'$ exists as $|X^*| \geq \sqrt{160\gamma}n$).
        For every $x \in X'$ we have 
        $|N_Y(x) \triangle Y_1| \leq 0.02 |Y|$ and hence $|N_Y(x) \cap Y_1| \geq |Y_1| - 0.02|Y| \geq 0.8|Y_1|$ and 
        $|N_Y(x) \cap Y_2| \leq 0.02 |Y| \leq 0.2|Y_2|$. These imply that $d(X',Y_1) \geq 0.8$ and $d(X',Y_2) \leq 0.02$, respectively. 
        By averaging, there are $Y'_i \subseteq Y_i$, $i\in\{1,2\}$, with $|Y'_i| = |X'|$ and
		$d(X',Y'_1) \geq 0.8$, $d(X',Y'_2) \leq 0.2$. 
        Hence,
		\begin{equation*}\label{eq:cut property lemma, 4}
		e(X',Y'_1) - e(X',Y'_2) \geq 0.6|X'|^2.
		\end{equation*}
		On the other hand, by Claim \ref{claim:balanced partitions switching}, we have
		\begin{equation*}\label{eq:cut property lemma, 5}
		|e(X',Y'_1) - e(Y'_1) - e(X',Y'_2) + e(Y'_2)|\leq |e(X',Y'_1) - e(X') - e(Y'_1)|+|e(X',Y'_2) - e(X') - e(Y'_2)| \leq 4\gamma n^2.
		\end{equation*}
		By the triangle inequality,
		\begin{align*}
		|e(Y'_1) - e(Y'_2)| &\geq |e(X',Y'_1) - e(X',Y'_2)| - |e(X',Y'_1) - e(Y'_1) - e(X',Y'_2) + e(Y'_2)|
		\\ &\geq
		0.6 |X'|^2 - 4\gamma n^2 > 0.5|X'|^2,
		\end{align*}
		using that $|X'| \geq \sqrt{40\gamma }n$.
		On the other hand, $e(Y'_1),e(Y'_2) \leq \binom{|X'|}{2} < \frac{1}{2}|X'|^2$ (as $|Y'_1| = |Y'_2| = |X'|$), and so $|e(Y'_1) - e(Y'_2)|  < \frac{1}{2}|X'|^2$. This contradiction completes the proof of the claim. 
	\end{proof}
    We now complete the proof of the lemma. 
    For each $0 \leq i < \frac{1}{\beta}$, let $X_i$ be the set of all vertices $x \in X_{\text{med}}$ with $i \beta n \leq d_Y(x) \leq (i+1)\beta n$. Clearly, $X_{\text{med}} = \bigcup_{0 \leq i < 1/\beta} X_i$, and for every $x,x' \in X_i$ we have $|d_Y(x) - d_Y(x')| \leq \beta n$. 
    Now, for each $0 \leq i < \frac{1}{\beta}$, we repeatedly remove from $X_i$ a pair of vertices $x,x'$ with $0.02|Y| \leq |N_Y(x) \triangle N_Y(x')| \leq 0.98|Y|$, until no such pairs remain. By the claim, we can continue this as long as there are at least $\sqrt{160\gamma }n$ remaining vertices. Hence, we extract at least $\frac{|X_i| - \sqrt{160\gamma }n}{2}$ such pairs from $X_i$. Summing over all $i$, the total number of pairs is at least
	$$
	\sum_{i=0}^{1/\beta-1} \frac{|X_i| - \sqrt{160\gamma }n}{2} = 
	\frac{|X_{\text{med}}|}{2} - \frac{1}{\beta} \cdot \frac{\sqrt{160 \gamma}n}{2} \geq 
	0.15\cdot\frac{|X|}{2} - \frac{1}{\beta} \cdot \frac{\sqrt{160 \gamma}n}{2} \geq 
	0.05n,	
	$$
	assuming that $\gamma$ is small enough as a function of $\beta$. Also, for each extracted pair $x,x'$, we have $|d_Y(x) - d_Y(x')| \leq \beta n$ because $x,x' \in X_i$ for some $i$. This shows that $G$ is host-$\beta$-good.
\end{proof}

\subsection{Putting it all together}\label{subsec:proof of theorems}

We now combine all of the above to prove Theorems \ref{thm:max degree} and \ref{thm:d-regular}.
For Theorem \ref{thm:d-regular}, we need the following simple lemma, showing that every $d$-regular graph with $d \leq \frac{n}{2}$ is guest-good with value \nolinebreak $\Omega(\sqrt{d}n)$.
\begin{lemma}\label{lem:guest-good d-regular}
    For all $n$ sufficiently large and all $d \leq \frac{n}{2}$, every $n$-vertex $d$-regular graph $F$ is guest-good with value $t \geq 0.01n\sqrt{d}$.
\end{lemma}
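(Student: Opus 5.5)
The plan is to take $d_i := d$ for every pair, so that the value is $t = m\sqrt{d}$. It then suffices to find a bisection $V(F) = U \cup V$ and $m$ disjoint pairs in $U$ with $0.01n \le m \le 0.05n$ satisfying Items 1--3 of Definition \ref{def:guest}. We first choose the pairs deterministically inside $F$, then choose the bisection at random with all pair vertices forced into $U$, and finally show that a constant fraction of the pairs survive in expectation.

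\textbf{Step 1 (pairs with different neighbourhoods).} Call $u,u'$ \emph{near-twins} if $|N(u) \triangle N(u')| < 0.2d$. If $u'$ is a near-twin of $u$, then $u'$ shares more than $0.9d$ neighbours with $u$. Counting paths of length $2$ from $u$ (there are at most $d^2$) shows that $u$ has at most $d^2/(0.9d) \le 1.12d \le 0.56n$ near-twins. So the complement of the near-twin graph has minimum degree at least $0.43n$, and a greedy argument gives a matching of at least $0.2n$ disjoint pairs $\{u,u'\}$ with $|N(u) \triangle N(u')| \ge 0.2d$. After swapping the labels within each pair we get $|N(u)\setminus N(u')| \ge 0.1d$. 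We keep $0.1n$ of these pairs; they span $0.2n$ vertices.

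\textbf{Step 2 (random bisection).} Put the $0.2n$ pair vertices into $U$, and fill the rest of $U$ (up to size $\lfloor n/2\rfloor$) with a uniformly random subset of the remaining vertices; $V$ is the complement. Each remaining vertex lands in $V$ with probability about $5/8$. For a fixed pair we check the three items.
\begin{itemize}
\item[Item 1.] The quantity $|N_V(u)\setminus N_V(u')|$ is hypergeometric-like. The part of $N(u)\setminus N(u')$ lying outside the pair vertices could be small, so we should either pick pairs whose differing neighbourhood avoids the $0.2n$ paired vertices, or simply require $|N(u)\setminus N(u')\setminus W|\ge 0.05d$, where $W$ is the set of paired vertices. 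This is arranged in Step 1 by counting again, and then Item 1 holds with probability at least a constant: for large $d$ by concentration, and for bounded $d$ because a single surviving vertex suffices when $0.01d \le 1$.
\item[Item 2.] We have $|N_V(u)\setminus N_V(u')| \le d_V(u)$, with $d_V(u) \approx \tfrac58 d \le 0.32n < \tfrac23|V|$ with high probability when $d$ is large; when $d$ is small, Item 2 holds trivially.
\item[Item 3.] The difference $d_V(u) - d_V(u')$ has mean $O(1)$ and variance $O(d)$, being a sum over the at most $2d$ vertices of $N(u)\triangle N(u')$ of negatively correlated indicators with signs. Chebyshev's inequality gives $|d_V(u)-d_V(u')| \le 20\sqrt d$ with probability at least $0.99$.
\end{itemize}

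\textbf{Step 3 (counting).} Each pair is therefore \emph{good} (Items 1--3 all hold) with probability at least some absolute constant $c_0$. The expected number of good pairs is at least $0.1c_0 n$, so some outcome of the bisection has that many good pairs. If $c_0 \ge 0.1$, this gives at least $0.01n$ good pairs, and we discard extras to keep $m \le 0.05n$. Then $t = m\sqrt d \ge 0.01n\sqrt d$.

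The main obstacle is making the constants line up, that is, guaranteeing a per-pair success probability of at least $0.1$ in Item 1 uniformly over all $d$. This is hardest when $d$ is small or moderate, where hypergeometric concentration is weak. The first thing to try is to make Item 1 nearly deterministic by choosing, in Step 1, pairs whose differing neighbourhoods lie mostly outside $W$, and to start from more pairs if needed. Since only $0.01n$ good pairs are required, starting with up to $0.2n$ pairs leaves slack to absorb a smaller success probability.
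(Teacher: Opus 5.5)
There is a genuine gap in Item 3, and the same issue sits behind your unexecuted Item 1 fix. You force the set $W$ of paired vertices into $U$, so no vertex of $W$ can land in $V$. Hence
$\mathbb{E}[d_V(u)-d_V(u')] = p_V\bigl(|N(u)\setminus W|-|N(u')\setminus W|\bigr) = p_V\bigl(d_W(u')-d_W(u)\bigr)$ with $p_V\approx\tfrac58$.
Regularity gives $|N(u)\setminus N(u')|=|N(u')\setminus N(u)|$, but it says nothing about how these two sets meet $W$. So the mean is not $O(1)$, and Chebyshev around a mean of order $d$ cannot give $|d_V(u)-d_V(u')|\le 20\sqrt d$.

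Here is a concrete counterexample. Let $F$ be a disjoint union of copies of $K_{d+1}$, with $d$ large and suitable divisibility. Let $W$ consist of all vertices of some cliques covering $0.1n$ vertices, together with exactly half of the vertices of further cliques covering $0.2n$ vertices. Pair each vertex $u'$ of the fully used cliques with a vertex $u$ of the half-used ones.
\begin{itemize}
\item Vertices in different cliques have $|N(u)\triangle N(u')|=2d$, so this is a legitimate output of your Step 1.
\item It even meets your extra requirement, since $|(N(u)\setminus N(u'))\setminus W|=(d+1)/2$.
\item But $N(u')\subseteq W\subseteq U$, so $d_V(u')=0$ deterministically, while $d_V(u)$ is concentrated around $\tfrac{5}{16}d$.
\item $U$ is not independent.
\end{itemize}
So Item 3 fails with high probability for every one of your $0.1n$ pairs.

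Repairing this within your scheme would require $|d_W(u)-d_W(u')|=O(\sqrt d)$ for each pair. That is circular, because $W$ is the union of the pairs. The same circularity affects your proposal to arrange $|(N(u)\setminus N(u'))\setminus W|\ge 0.05d$ ``by counting again'', which is never actually done. The real obstacle is therefore the degree balance in Item 3, not the per-pair probability in Item 1.

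The way out, which is the paper's route, is to reverse the order: fix the bisection first and choose the pairs afterwards.
\begin{itemize}
\item Take a uniformly random partition, adjust it to a bisection $(U,V)$ by moving $O(\sqrt n)$ vertices, and fix an outcome.
\item Select pairs only inside $U^*=\{u\in U : |d_V(u)-d/2|\le 10\sqrt d\}$. Chebyshev and Markov give $|U^*|\ge 0.3n$.
\item Every pair inside $U^*$ satisfies Item 3 automatically.
\item Every pair also satisfies Item 2, because with high probability each vertex has at most $(\tfrac14+o(1))n$ neighbours on each side. This is where $d\le n/2$ is used.
\item Item 1 comes from a maximal family of disjoint pairs in $U^*$ with $|N_V(u)\setminus N_V(u')|\ge 0.01d$. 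If it had fewer than $0.01n$ pairs, fix a leftover vertex $u$. All other $\ge 0.28n$ leftover vertices almost contain $N_V(u)$, so averaging produces $v\in N_V(u)$ with $d_U(v)\ge 0.26n$, a contradiction.
\end{itemize}
Your path-counting near-twin bound also works for this last step. You must count twins with respect to $V$-neighbourhoods within $U^*$ and bound $d_U(w)\le(\tfrac14+o(1))n$ instead of $d$. Then each $u$ has at most about $0.27n$ such twins, and the constants still close.
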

\begin{proof}
    We assume that $d$ is larger than some absolute constant. The case of small $d$ can be handled similarly, and we omit the details.
    Take a random partition $V(F) = U_0 \cup V_0$ by placing each vertex in one of the parts $U_0,V_0$ with probability $\frac{1}{2}$, independently.
    By Hoeffding's inequality, we have that $|U_0|,|V_0| = \frac{n}{2} \pm \sqrt{n}$ with probability at least $1 - 2/e^2 \geq 0.7$. Also, for a vertex $x \in V(F)$, we have $d_{V_0}(x) \sim \Bin(d,\frac{1}{2})$, hence $\mathbb{E}[d_{V_0}(x)] = \frac{d}{2}$ and  
	$\text{Var}[ d_{V_0}(x) ] = \frac{d}{4}$. 
    By Chebyshev's inequality we have
	$\mathbb{P}\big[ |d_{V_0}(x) - \frac{d}{2}| \geq 2\sqrt{d} \big] \leq \frac{1}{16}$. 
    Let $Z$ be the number of $u \in U_0$ with $|d_{V_0}(u) - \frac{d}{2}| \geq 2\sqrt{d}$. Then 
    $\mathbb{E}[Z] \leq \frac{n}{32}$, so 
    $\mathbb{P}[Z \leq \frac{n}{16}] \geq \frac{1}{2}$. 
    Finally, as $d \leq n/2$, we get by Hoeffding's inequality that w.h.p.~
    $d_{U_0}(x),d_{V_0}(x) \leq (\frac{1}{4} + o(1))n$ for all $x \in V(G)$.

    Fix an outcome where $|U_0|,|V_0| = \frac{n}{2} \pm \sqrt{n}$, $Z \leq \frac{n}{16}$, and 
    $d_{U_0}(x),d_{V_0}(x) \leq (\frac{1}{4} + o(1))n$ for all $x \in V(G)$. 
    Move at most $\sqrt{n}$ vertices between $U_0,V_0$ to obtain a partition $U,V$ with $|U| = \lfloor n/2 \rfloor$. The moved vertices 
    touch at most $\sqrt{n} d \leq n\sqrt{d}$ edges, hence 
    there are at most $\frac{n}{8}$ vertices $u \in U_0$ which are adjacent to at least $8\sqrt{d}$ of the moved vertices.
    Therefore, there are at least
    $|U_0| - \sqrt{n} - \frac{n}{16} - \frac{n}{8} \geq 0.3n$ vertices 
    $u \in U$ satisfying 
    $|d_V(u) - \frac{d}{2}| \leq 10\sqrt{d}$. Let $U^* \subseteq U$ be the set of these vertices; so 
    $|U^*| \geq 0.3n$. Clearly, 
    $|d_V(u) - d_V(u')| \leq 20\sqrt{d}$ and 
    $|N_V(u) \setminus N_V(u')| \leq d_V(u) \leq (\frac{1}{4} + o(1))n \leq \frac{n}{3} \leq \frac{2}{3}|V|$
    for all $u,u' \in U^*$. 
    
    Let $u_i,u'_i \in U^*$, $i = 1,\dots,m$, be a maximal collection of disjoint pairs of vertices of $U^*$ satisfying that
    $|N_V(u_i) \setminus N_V(u'_i)| \geq 0.01d$ for every $i \in [m]$. We claim that $m \geq 0.01 n$. If not, consider the set $U' := U^* \setminus \{u_i,u'_i : i \in [m]\}$, so 
    $|U'| = |U^*| - 2m \geq 0.28n$. Fix any $u \in U'$ and consider $V' := N_V(u)$. 
    Note that $|V'| \geq \frac{d}{2} - 10\sqrt{d} \geq 0.49d$, as $u \in U^*$.
    By the maximality of $m$, for every $u' \in U' \setminus \{u\}$ we have $|N_V(u') \cap V'| \geq |V'| - 0.01 d$, and therefore 
    $e(U',V') \geq |U'| \cdot \left( |V'| - 0.01 d \right)$. 
    By averaging, there exists $v \in V'$ \nolinebreak with 
    \begin{equation}\label{eq:guest-good contradiction}
    d_U(v) \geq 
    \frac{|U'|}{|V'|} \left( |V'| - 0.01 d \right) \geq 
    |U'| - \frac{0.01 d \cdot \frac{n}{2}}{|V'|} \geq 
    |U'| - \frac{0.01dn}{0.98d} \geq 0.26n.
    \end{equation}
    But $d_U(v) \leq d_{U_0}(v) + \sqrt{n} \leq (\frac{1}{4} + o(1))n$, a contradiction.
    This proves our claim that $m \geq 0.01n$. Setting $d_1 = \dots = d_m = d$, we now see that $F$ is guest-good with value 
    $t = m \sqrt{d} \geq 0.01n\sqrt{d}$. 
\end{proof}

\begin{proof}[Proof of Theorem \ref{thm:d-regular}]
Let $n$ be large enough and let $F$ be an $n$-vertex $d$-regular graph with $d \leq (1-\varepsilon)n$. 
Fix any $\{\pm 1\}$-edge-coloring of $K_n$ and let $G$ be the graph of edges of color $1$. 
We first consider the case that $d \leq \frac{n}{2}$. 
Let $c' = c_{\ref{thm:bisection d-regular}}$ be given by Theorem \ref{thm:bisection d-regular},
$\beta=\beta_{\ref{lem:main}}$ by Lemma \ref{lem:main} and $\gamma = \gamma_{\ref{lem:host-good}}(\beta)$ by Lemma \ref{lem:host-good}. We choose the parameter $c$ in the statement of the theorem to satisfy $c \ll c',\beta,\gamma$. 
We consider \nolinebreak two \nolinebreak cases.
\paragraph{Case 1:}
There exists a partition $V(G) = X \cup Y$ with $|X| = \lfloor \frac{n}{2} \rfloor$ and 
$\big| e_G(X,Y) - \frac{1}{2}|X||Y| \big| \geq \gamma n^2$. By Theorem \ref{thm:bisection d-regular}, there exists a partition $V(F) = U \cup V$ with $|U| = \lfloor \frac{n}{2} \rfloor$ and $\big| e_F(U,V) - \frac{e(F)}{2} \big| \geq c' \sqrt{d}n$. Now, by Lemma \ref{lem:cut discrepancy} with $t = c'\sqrt{d}n$, there is a copy $F_0$ of $F$ in $K_n$ with $\big|  |E(F_0) \cap E(G)| - \frac{e(F)}{2} \big| \geq 0.5\gamma t$. Thus, $F_0$ has discrepancy at least $\gamma t \geq c n \sqrt{d}$. Note that we may apply Lemma \ref{lem:cut discrepancy} because $\gamma t = \Omega(n\sqrt{d}) \gg \frac{d}{2} = \frac{e(F)}{n}$.
\paragraph{Case 2:}
Every partition $V(G) = X \cup Y$ with $|X| = \lfloor \frac{n}{2} \rfloor$ satisfies 
$\big| e_G(X,Y) - \frac{1}{2}|X||Y| \big| \leq \gamma n^2$. Then $G$ is host-$\beta$-good by Lemma \ref{lem:host-good}. 
By Lemma \ref{lem:guest-good d-regular}, $F$ is guest-good with value $t \geq \Omega(n \sqrt{d})$.
By Lemma \ref{lem:main}, there is a copy $F_0$ of $F$ in $K_n$ such that 
$\big|  |E(F_0) \cap E(G)| - \frac{e(F)}{2} \big| \geq \beta t$. Hence, $F_0$ has discrepancy at least 
$2 \beta t \geq c n \sqrt{d}$. This completes the proof. 

\vspace{0.5cm}
It remains to handle the case $\frac{n}{2} < d \leq (1-\varepsilon)n$. 
By the above, there is a constant $c > 0$ such that the given 2-coloring of $K_n$ has a copy $\overline{F_0}$ of $\overline{F}$ (the complement of $F$) with 
$\big| |E(\overline{F_0}) \cap E(G)| - \frac{e(\overline{F})}{2}\big| \geq cn\sqrt{n-1-d} \geq c\sqrt{\varepsilon d}n$. 
Let $F_0 = K_n \setminus \overline{F_0}$ be the complement of $\overline{F_0}$. 
If 
$|e(G) - 0.5\binom{n}{2}| \leq \frac{c}{2}\sqrt{\varepsilon d}n$, then  
$
\big| |E(F_0) \cap E(G)| - \frac{e(F)}{2} \big| \geq
\frac{c}{2}\sqrt{\varepsilon d}n,
$
and otherwise, 
by Lemma \ref{lem: random embedding}, we get a copy $F_1$ of $F$ with 
$\big| |E(F_1) \cap E(G)| - \frac{e(F)}{2} \big| \geq 
\frac{e(F)}{\binom{n}{2}} \cdot \frac{c}{2}\sqrt{\varepsilon d}n \geq \frac{c}{4}\sqrt{\varepsilon d}n$. This completes the proof.
\end{proof}

\begin{proof}[Proof of Theorem \ref{thm:max degree}]
Let $\varepsilon > 0$. We may and will assume that $\varepsilon$ is small enough when needed. 
Let $n$ be large enough and let $F$ be an $n$-vertex graph with maximum degree at most $(1-\varepsilon)n$ and no isolated vertices.
Let $\beta=\beta_{\ref{lem:main}}$ be given by Lemma \ref{lem:main}, and let $\gamma = \gamma_{\ref{lem:host-good}}(\beta)$ be given by Lemma \ref{lem:host-good}. 
For convenience, fix a small enough constant $\delta \ll \gamma$. We will assume that $\varepsilon \leq \delta$, and choose the parameter $c$ in the statement of the theorem to satisfy $c \ll \delta$. 

Fix any $\{\pm 1\}$-edge-coloring of $K_n$ and let $G$ be the graph of edges of color $1$. 
We may assume that $e(G)/\binom{n}{2} = 0.5 \pm 0.01$, say; else we are done by Lemma \ref{lem: random embedding}.
We consider several cases, starting with the case where $F$ has a vertex of large degree. 

\paragraph{Case 1:} $\Delta(F) \geq \delta n$.
Fix $u \in V(F)$ with $d_F(u) \geq \delta n$. 
In Case 1 we switch a single pair of vertices. We now show that there is a suitable vertex $u'$ to switch with $u$. 
		\begin{claim}\label{claim:u'}
			There is $u' \in V(F)$ with 
            $|N_F(u) \triangle N_F(u')| \geq \frac{\varepsilon}{2}n$.
		\end{claim}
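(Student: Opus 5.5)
We need: given $u$ with $d_F(u)\ge \delta n$ and $\Delta(F)\le(1-\varepsilon)n$, there is $u'$ with $|N_F(u)\triangle N_F(u')|\ge \frac{\varepsilon}{2}n$.

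The plan is to argue by contradiction via a double count. Set $A := N_F(u)$, so $\delta n \le |A| \le (1-\varepsilon)n$, and let $B := V(F)\setminus A$, so $|B| \ge \varepsilon n$. Suppose that every $u'\in V(F)$ satisfies $|N_F(u')\triangle A| < \frac{\varepsilon}{2}n$.

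First apply this to vertices $u'\in A$. Since $u'\notin N_F(u')$ but $u' \in A$, the vertex $u'$ itself lies in the symmetric difference; this costs only one element and is harmless. The useful consequence is that each $u'\in A$ has fewer than $\frac{\varepsilon}{2}n$ neighbours in $B$. Hence
\[
e_F(A,B) < |A|\cdot \tfrac{\varepsilon}{2} n .
\]

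Next apply the assumption to vertices $w\in B$. Each such $w$ misses fewer than $\frac{\varepsilon}{2}n$ vertices of $A$, so $d_A(w) > |A| - \frac{\varepsilon}{2}n$. Summing over $B$ gives
\[
e_F(A,B) > |B|\left(|A| - \tfrac{\varepsilon}{2}n\right).
\]
Since $|A|\ge\delta n \gg \varepsilon n$ (recall $\varepsilon\le\delta$, and we may take $\varepsilon$ a small fraction of $\delta$), this lower bound is at least $|B|\cdot\frac{|A|}{2}$.

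Comparing the two bounds yields $|B|\cdot\frac{|A|}{2} < |A|\cdot\frac{\varepsilon}{2}n$, that is, $|B| < \varepsilon n$. This contradicts $|B|\ge \varepsilon n$, and the claim follows.

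The only delicate point is the constants. If $\varepsilon$ is comparable to $\delta$, the first factor becomes $|A|-\frac{\varepsilon}{2}n \ge (1-\frac{\varepsilon}{2\delta})|A|$ rather than $\frac{|A|}{2}$. In that case one would need $|B|\ge\varepsilon n$ to beat $\frac{\varepsilon n}{2}\big/(1-\frac{\varepsilon}{2\delta})$, which holds provided $\varepsilon\le\delta$. So the plan is to use the assumption $\varepsilon\le\delta$ explicitly at this step.

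Alternatively, one can bypass $A$-vertices entirely: by pigeonhole some $w\in B$ already lies in $N_F(u)\triangle N_F(w)$ unless $d_A(w)$ is large, and the double count above is the cleanest way to make that precise.
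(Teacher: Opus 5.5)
Your proof is correct and is essentially the paper's argument: the same sets $A = N_F(u)$ and $B = V(F)\setminus A$, the same double count of $e(A,B)$ using $|A|\ge\delta n\ge\varepsilon n$ to get $|A|-\frac{\varepsilon}{2}n\ge \frac{|A|}{2}$, and the same contradiction $|B|<\varepsilon n$ against $\Delta(F)\le(1-\varepsilon)n$. Your extra remarks (the ``delicate point'' and the closing alternative) are unnecessary, since $\varepsilon\le\delta$ alone already yields the factor $\frac{|A|}{2}$.
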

		\begin{proof}
			Suppose otherwise. Put $A := N_F(u)$, $B := V(F) \setminus A$. As 
			$|N_F(u) \triangle N_F(u')| < 
            \frac{\varepsilon}{2}n$ for every $u' \in V(F)$, we have that $d_B(u') < \frac{\varepsilon}{2}n$ for every $u' \in A$, and $d_A(u') > |A| - \frac{\varepsilon}{2}n$ for every $u' \in B$. These two facts imply that $e(A,B) < |A| \cdot \frac{\varepsilon}{2}n$ and $e(A,B) > |B| \cdot (|A| - \frac{\varepsilon}{2}n) \geq (n - |A|) \cdot \frac{|A|}{2}$, using that $|A| = d_F(u) \geq \delta n \geq \varepsilon n$. Combining these two inequalities, we get that $n - |A| < \varepsilon n$ and hence $|A| > (1-\varepsilon)n$, contradicting $\Delta(F) \leq (1-\varepsilon)n$.  
		\end{proof}
		Fix a vertex $u' \in V(F) \setminus \{u\}$ as given by Claim \ref{claim:u'}. 
		Put $V := N_F(u) \setminus (N_F(u') \cup \{u'\})$ and $V' := N_F(u') \setminus (N_F(u) \cup \{u\})$; without loss of generality, $|V| \geq |V'|$ (else swap $u,u'$). So 
        $|V| \geq \frac{1}{2}|N_F(u) \triangle N_F(u')| - 1 \geq \frac{\varepsilon}{5}n$.    

        Next, we need to find two suitable vertices $x,x' \in V(G)$ onto which we will map $u,u'$. This is done in the following claim.

        \begin{claim}\label{claim:x,x'}
            There exist $x,x' \in V(K_n)$ with  
            $|N_G(x) \setminus N_G(x')|,
            |N_G(x') \setminus N_G(x)| < 0.99n$
            and $|N_G(x) \setminus \nolinebreak N_G(x')| > 0.01n$.
        \end{claim}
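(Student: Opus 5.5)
We argue by contradiction, using only that $e(G)/\binom{n}{2} = 0.5 \pm 0.01$. The plan has three steps: a complement symmetry, a reduction to low-degree vertices, and a structural analysis. The last step is where most of the work lies, and its second sub-case is not fully settled below.

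\emph{Symmetry.} The statement is essentially invariant under replacing $G$ by $\overline{G}$ and swapping $x,x'$. Indeed, $N_{\overline{G}}(x) \setminus N_{\overline{G}}(x')$ equals $N_G(x') \setminus N_G(x)$ up to the two vertices $x,x'$. The density assumption is also symmetric. Since the thresholds $0.01n$ and $0.99n$ leave slack, an $O(1)$ error is harmless for large $n$. So we may pass to the complement whenever convenient.

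\emph{Reduction.} Let $H = \{v : d_G(v) \geq 0.99n\}$ and $L = V \setminus H$. If $x,x' \in L$, the two upper bounds $|N_G(x)\setminus N_G(x')|, |N_G(x')\setminus N_G(x)| < 0.99n$ hold automatically. So we are done unless every pair $x,x' \in L$ satisfies $|N_G(x) \triangle N_G(x')| \leq 0.02n$ (applying the hypothesis in both orders). The average degree is at most $0.51n$, so $|H| \leq 0.52n$ and $|L| \geq 0.48n$. Fix $x_0 \in L$ and set $A := N_G(x_0)$; then every $x \in L$ has $|N_G(x) \triangle A| \leq 0.02n$.

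\emph{Structure.} Double count $e(L, L\setminus A)$. Each $x \in L$ has at most $0.02n$ neighbours outside $A$. Each $z \in L \setminus A$ has at least $|A \cap L| - 0.02n$ neighbours in $L$. This gives $|L \setminus A|\,(|L\cap A| - 0.02n) \leq 0.02n|L|$, so one of $L\cap A$, $L\setminus A$ has size $O(\sqrt{0.02}\,n)$. Two sub-cases follow.

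\begin{itemize}
\item \emph{$L$ is almost inside $A$.} Then $L$ is nearly a clique. Every vertex of $H$ misses at most $0.01n$ vertices. Counting non-edges then shows that $G$ has density close to $1$, contradicting $e(G) \approx \frac12\binom n2$.
\item \emph{$L$ is almost disjoint from $A$.} Here the density does not give a contradiction. The complement of a clique on $0.7n$ vertices is a genuine example of this shape, so instead we must use pairs $x \in L$, $x' \in H$. Since $|A| \geq$ roughly half the average degree is forced by the density, $A$ consists mostly of vertices of $H$ and is large. For $x \in L$ and $x' \in H$ we then check three things:
  \begin{itemize}
  \item $N_G(x') \setminus N_G(x) \supseteq$ most of $L \setminus A$, which has size $\approx |L| \geq 0.48n > 0.01n$;
  \item $|N_G(x')\setminus N_G(x)| \leq n - |A| + 0.02n < 0.99n$, provided $|A| > 0.03n$;
  \item $|N_G(x)\setminus N_G(x')| \leq 0.01n < 0.99n$.
  \end{itemize}
  Swapping the names of $x$ and $x'$ then satisfies the claim. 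If $|A| \leq 0.03n$, then $L$ consists of vertices of degree $O(0.05n)$ and $H$ of near-full vertices. We pass to $\overline{G}$, where the roles of $L$ and $H$ reverse, and run the same argument.
\end{itemize}

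\emph{Main obstacle.} The crux is the second sub-case: choosing the right pair from $L \times H$ and bounding $|A|$ from below to keep the sizes in the required window $(0.01n, 0.99n)$. I have sketched the constants with slack but have not verified them. I would first check the borderline configurations, namely very small $|A|$ and $|L|$ close to $0.48n$, either directly or via the complement symmetry.
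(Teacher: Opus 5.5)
Your route differs from the paper's. The paper takes $x$ of maximum degree and splits on whether $d(x)\le 0.97n$. Your route is viable: the reduction to ``every $x\in L$ has $|N_G(x)\triangle A|\le 0.02n$'' is correct, the double count giving $\min(|L\cap A|,|L\setminus A|)<0.1n$ is correct, and the first sub-case (density near $1$) is correct. But, as you suspected, the second sub-case has a genuine gap as written. Your three bullet checks need $H\neq\emptyset$ and $|A|>0.03n$, and neither is proved. ``$|A|\ge$ roughly half the average degree'' is not an argument.

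The fallback for $|A|\le 0.03n$ also fails. In $\overline{G}$ the former $L$-vertices have degree only about $0.95n$, so they need not land in the new $H$. The roles do not reverse, and ``run the same argument'' can return you to the same unresolved sub-case, so the step is circular. Moreover, the complement symmetry is not free: $|N_{\overline G}(x)\setminus N_{\overline G}(x')| = |N_G(x')\setminus N_G(x)|\pm 1$. Your first reduction step (a pair in $L$ with difference $>0.01n$) has no slack at the strict threshold, so conclusions for $\overline G$ do not transfer back verbatim.

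The missing idea is that the small-$|A|$ case is vacuous, because the vertices of $H$ force $L$-vertices to have large degree. Say $|L\cap A|<0.1n$. Every $x\in L$ has $d(x)\le |A|+0.02n$. Every $v\in H$ has at most $0.01n$ non-neighbours, so $e(H,L)\ge |H|(|L|-0.01n)\ge 0.97|H||L|$, using $|L|\ge 0.48n$. Averaging gives some $x\in L$ with $0.97|H|\le d(x)\le |A|+0.02n$. Hence
\[
0.49\,n(n-1)\le 2e(G)\le |H|\,n+|L|\,(|A|+0.02n)\le 2.04\,n\,(|A|+0.02n),
\]
so $|A|\ge 0.2n$ for large $n$. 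Since $A\setminus L\subseteq H$, this also gives $|H|>0.1n$. With this, your bullets go through with room to spare: $|N_G(x')\setminus N_G(x)|\ge |L\setminus A|-0.03n>0.35n$ and $|N_G(x')\setminus N_G(x)|\le 1.02n-|A|\le 0.82n$. The complement symmetry can then be dropped entirely.

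Compared with the paper, your version needs this extra density count plus a two-way structural dichotomy. The paper's choice of a maximum-degree vertex makes the upper bounds automatic when $d(x)\le 0.97n$. It reduces the other case to a single averaging between near-copies and near-complements of $N_G(x)$.
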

        \begin{proof}
            Let $x$ be a vertex of maximum degree in $G$. 
            Note that $d(x) \geq 0.49(n-1)$ by the assumption that $e(G)/\binom{n}{2} \geq 0.49$.
            Suppose first that $d(x) \leq 0.97n$. If there is $x' \in V(G)$ with 
            $|N_G(x) \setminus N_G(x')| > 0.01n$ then we are done; indeed, the other requirement in the claim holds because $\Delta(G) \leq 0.97n$. So suppose by contradiction that 
            $|N_G(x) \setminus N_G(x')| \leq 0.01n$ for all $x' \in V(G)$. Then every vertex $x'$ has at least $d(x) - 0.01n$ neighbors in $N_G(x)$. By averaging, there is a vertex $y \in N_G(x)$ with 
            $d(y) \geq \frac{n \cdot (d(x) - 0.01n)}{d(x)} > 0.97n$, where the last inequality holds for 
            $d(x) > n/3$.
            This contradicts the assumption that $\Delta(G) \leq 0.97n$. 

            Next, suppose that $d(x) \geq 0.97n$. 
            If there is $x' \in V(G)$ with 
            $0.01n < |N_G(x) \setminus N_G(x')| < 0.99n$ then we are done. 
            So suppose, for the sake of contradiction, that there is no such $x'$.
            Set
            $Z = \{x' : |N_G(x) \setminus N_G(x')| \geq 0.99n\}$ and 
            $W = \{x' : |N_G(x) \setminus N_G(x')| \leq 0.01n\}$. By assumption, $Z \cup W = V(G)$. Also, each vertex in $Z$ has degree at most $0.01n$, and each vertex in $W$ has degree at least $d(x) - 0.01n \geq 0.96n$.
            So $|W| \cdot 0.96n \leq 2e(G) \leq 1.02\binom{n}{2} \leq 0.51n^2$, implying that $|W| \leq 0.55n$. 
            Similarly, $0.49\binom{n}{2} \leq e(G) \leq |W| \cdot n + |Z| \cdot 0.01n \leq |W| \cdot n + 0.01n^2$, so $|W| \geq 0.2n$.
            Each $y \in W$ has at least $0.96n - |W| \geq 0.4n$ neighbors in $Z$. Hence, by averaging, there is $z \in Z$ with $d(z) \geq \frac{|W| \cdot 0.4n}{|Z|} \geq \frac{0.2n \cdot 0.4n}{n} > 0.01n$, a contradiction.
        \end{proof}
        
        Fix vertices $x,x' \in V(G)$ as given by Claim \ref{claim:x,x'}.
        Put $X := N_G(x) \setminus (N_G(x') \cup \{x'\})$ and $X' := N_G(x') \setminus (N_G(x) \cup \{x\})$, and assume without loss of generality that $|X| \geq |X'|$ (else swap $x$ and $x'$). 
        We have $|X| \geq 0.01n$ and $|X|,|X'| \leq 0.99n$.
		Consider disjoint $N,N' \subseteq V(K_n) \setminus \{x,x'\}$ with $|N| = |V|, |N'| = |V'|$, to be chosen later. Let $\varphi_1$ be an embedding of $F$ into $K_n$ where $\varphi_1(u) = x, \varphi_1(u') = x', \varphi_1(V) = N, \varphi_1(V') = N'$. Let $\varphi_2$ be obtained from $\varphi_1$ by switching the roles of $u,u'$ (while leaving all other vertices unchanged); i.e., 
		$\varphi_2(u) = x', \varphi_2(u') = x$ and $\varphi_2(v) = \varphi_1(v)$ for every $v \in V(F) \setminus \{u,u'\}$.
		For $i=1,2$, let $F_i$ be the copy of $F$ given by $\varphi_i$, i.e., $F_i = \{\varphi_i(e) : e \in E(F)\}$. 
		Observe that
		\begin{align}
		|E(F_1) \cap E(G)| &- |E(F_2) \cap E(G)| \nonumber \\ &= 
		|N \cap N_G(x)| + |N' \cap N_G(x')| - |N \cap N_G(x')| - |N' \cap N_G(x)|
		\label{eq:switching single pair 1}
		\\ &=
		|N \cap X| + |N' \cap X'| - |N \cap X'| - |N' \cap X|.
		\label{eq:switching single pair 2}
		\end{align} 
		Let us explain the two equalities above.
		To see that \eqref{eq:switching single pair 1} holds, consider the contribution of any $e \in E(F)$ to 
		$S := |E(F_1) \cap E(G)| - |E(F_2) \cap E(G)|$. If $e$ does not touch $u,u'$, or if $e = uu'$ (assuming $uu'$ is an edge), then $\varphi_1(e) = \varphi_2(e)$, so $e$ does not contribute to $S$. If $e$ connects $u$ or $u'$ to a vertex $v$ of $N_F(u) \cap N_F(u')$, then $\{\varphi_1(uv), \varphi_1(u'v)\} = \{\varphi_2(uv), \varphi_2(u'v)\}$, so again these edges do not contribute to $S$. So it remains to consider edges between $u$ and $N_F(u) \setminus (N_F(u') \cup \{u'\}) = V$ and between $u'$ and 
		$N_F(u') \setminus (N_F(u) \cup \{u\}) = V'$. The contribution of these edges to $S$ is precisely \eqref{eq:switching single pair 1}. 
        To see that \eqref{eq:switching single pair 2} holds, note that in the expression 
		\eqref{eq:switching single pair 1}, the contributions of 
		$|N \cap N_G(x) \cap N_G(x')|$ and 
        $|N' \cap N_G(x) \cap N_G(x')|$ cancel, leaving only the contributions coming from 
        $N_G(x) \setminus (N_G(x') \cup \{x'\}) = X$ and $N_G(x') \setminus (N_G(x) \cup \{x\}) = X'$.
		
		It remains to choose disjoint $N,N'$ (with $|N| = |V|, |N'| = |V'|$) for which \eqref{eq:switching single pair 2} is 
        $\Omega(\varepsilon n) \geq c \varepsilon n$. We consider two cases. Suppose first that $|X| \geq |N|$. In this case choose $N \subseteq X$, and choose $N'$ such that
        $|N' \cap X| \leq \max(0, |N'| - 0.01n)$. In other words, we place at least $\min(|N'|,0.01n)$ elements into $N'$ which do not belong to $X$;
        this is possible as 
        $|X| \leq 0.99n$. For this choice of $N,N'$, the expression \eqref{eq:switching single pair 2} is at least 
        $|N| - |N' \cap X| \geq 
        |N| - \max(0,|N'| - 0.01n) 
		\geq 
        \min(|N|,0.01n) \geq \frac{\varepsilon}{5}n$, using that 
        $|N| \geq |N'|$ and $|N| = |V| \geq \frac{\varepsilon}{5}n$. 
        Now suppose that $|N| \geq |X|$. Choose $N$ such that $N$ contains $X$ and also contains as few elements of $X'$ as possible. 
		As $|N| = |V| \leq d_F(u) \leq \Delta(F) \leq (1-\varepsilon)n$, we can choose $N$ such that $|N \cap X'| \leq \max(0, |X'| - \varepsilon n)$. Hence, using $X \subseteq N$, the expression \eqref{eq:switching single pair 2} is at least $|X| - |N \cap X'| \geq |X| - \max(0, |X'| - \varepsilon n) \geq \min(|X|,|X|-|X'|+\varepsilon n)$. As $|X| \geq |X'|$ and $|X| \geq 0.01n$, this is at least $\varepsilon n$, as required. This completes the proof in Case 1.

\paragraph{Case 2:} 
$\Delta(F) \leq \delta n$. 
In this case we will need to use that $G$ is host-$\beta$-good. Thus, we first handle the case that it is not. 
Indeed, suppose that there is a partition $V(G) = X \cup Y$ with $|X| = \lfloor \frac{n}{2} \rfloor$ satisfying 
$\big| e_G(X,Y) - \frac{1}{2}|X||Y| \big| \geq \gamma n^2$.
By Theorem \ref{thm:bisection max degree}, there is a partition $V(F) = U \cup V$ with $|U| = \lfloor \frac{n}{2} \rfloor$ and $|e_F(U,V) - \frac{e(F)}{2}| \geq \frac{n}{6}$.
By Lemma \ref{lem:cut discrepancy} with $t = \frac{n}{6}$, there is a copy $F_0$ of $F$ in $K_n$ with $\big|  |E(F_0) \cap E(G)| - \frac{e(F)}{2} \big| \geq 0.5\gamma t$. Thus, $F_0$ has discrepancy at least $\gamma t \geq c n$. Note that we may apply Lemma \ref{lem:cut discrepancy} because 
$\gamma t = \frac{\gamma n}{6}$ while
$e(F)/n \leq \Delta(F) \leq \delta n$, and $\gamma \ll \delta$.  
Thus, from now on, we may assume that $\big| e_G(X,Y) - \frac{1}{2}|X||Y| \big| \leq \gamma n^2$ for every partition $V(G) = X \cup Y$ with $|X| = \lfloor \frac{n}{2} \rfloor$. By Lemma \ref{lem:host-good}, $G$ is host-$\beta$-good. 

Next, we consider two subcases, based on the independence number of $F$. 
\paragraph{Case 2.1:}
$\alpha(F) \geq \frac{n}{2}$. Let $U \subseteq V(F)$ be an independent set of size $\lfloor \frac{n}{2} \rfloor$ and let $V = V(F) \setminus U$. 
By assumption, $F$ has no isolated vertices, so $d_V(u) \geq 1$ for every $u \in U$.
Let $u_i,u'_i \in U$, $i = 1,\dots,m$, be a maximal collection of pairs of vertices such that $u_1,u'_1,\dots,u_m,u'_m$ are pairwise distinct and satisfy
$N_F(u_i) \setminus N_F(u'_i) \neq \emptyset$ for all $i$. We claim that $m \geq 0.05n$. Suppose not. Fix any 
$u \in U_0 := U \setminus \{u_i,u'_i : 1 \leq i \leq m\}$. Let $v \in V$ be a neighbor of $u$. By assumption, $v$ is adjacent to all $u' \in U_0$, so $d(v) \geq |U_0| = |U| - 2m \geq 0.4n$, contradicting the assumption of Case 2. Now, the partition $(U,V)$ and the vertices $u_i,u'_i$, $i = 1,\dots,m := 0.05n$, witness the fact that $F$ is guest-good. Indeed, $U$ is independent, and setting $d_i := 1$ for all $i$, we have that 
$|N_V(u_i) \setminus N_V(u'_i)| \geq 1 = d_i$ and 
$d_V(u_i),d_V(u'_i) \leq \Delta(F) \leq \frac{2}{3}|V|$ for all $i$. The value of $F$ (as in Definition \ref{def:guest}) is $t := \sum_{i=1}^m \sqrt{d_i} = m = 0.05n$. By Lemma \ref{lem:main}, there is a copy $F_0$ of $F$ in $K_n$ such that 
$\big|  |E(F_0) \cap E(G)| - \frac{e(F)}{2} \big| \geq \beta t$. Hence, $F_0$ has discrepancy at least 
$2 \beta t \geq c n$, completing the proof in Case 2.1.

\paragraph{Case 2.2:}
$\alpha(F) \leq \frac{n}{2}$. 
Let $A$ be a maximal independent set in $F$; so $|A| \leq \frac{n}{2}$. By the maximality of $A$, every vertex outside $A$ has a neighbor in $A$. 
In Case 2.2, we will switch a subset of the pairs of vertices $u_iu'_i$, $i=1,\dots,m$, supplied by the following claim.
The purpose of Item 1 in Claim \ref{claim:switching vertices} is to guarantee that the switching results in a large gain (i.e., a large difference in the number of edges of $G$ in the copy of $F$ under consideration). The purpose of Item 2 is that at the same time, the total number of vertices participating in the switching is small. These vertices will have to be chosen carefully, and having only few of them is helpful in obtaining such a choice. 

\begin{claim}\label{claim:switching vertices}
   There exist distinct vertices $u_i,u'_i \in A$, $i=1,\dots,m \leq 5\delta n$, such that the following holds. For $1 \leq i \leq m$, let $W_i$ be the set of $v \in V(F)$ which are adjacent to $u_i$ or $u'_i$, but not adjacent to $u_j$ or $u'_j$ for any $j < i$. Let also $V_i = W_i \setminus N_F(u'_i)$ and $V'_i = W_i \setminus N_F(u_i)$.\footnote{Thus, $V_i$ is the set of vertices of $W_i$ which are adjacent to $u_i$ but not $u'_i$, and similarly for $V'_i$.}
   Then 
   \begin{enumerate}
        \item $\sum_{i=1}^m |V_i| \geq \delta n$.
        \item $\sum_{i=1}^m |W_i| \leq 10\delta n$.
   \end{enumerate}
\end{claim}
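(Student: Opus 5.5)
We choose the pairs greedily, one at a time, from $A$. The setting is Case 2.2 of the proof of Theorem \ref{thm:max degree}. Here $\Delta(F)\le\delta n$, and $A$ is a maximal independent set with $|A|\le n/2$, so every vertex outside $A$ has a neighbor in $A$. Since $F$ has no isolated vertices, every vertex of $A$ also has a neighbor, necessarily outside $A$.

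\textbf{Greedy step.} Suppose $u_1,u'_1,\dots,u_{i-1},u'_{i-1}$ have been chosen, and let $C_{i-1}=\bigcup_{j<i}(N_F(u_j)\cup N_F(u'_j))$ be the set of covered vertices. We stop as soon as $\sum_{j<i}|V_j|\ge\delta n$. Item 2 is then automatic as long as each step adds a bounded amount. Indeed, $|W_j|\le d(u_j)+d(u'_j)\le 2\delta n$, so overshooting costs at most $2\delta n$. Each step should add roughly comparable amounts to $\sum|V_j|$ and $\sum|W_j|$: we require $|V_i|\ge|W_i|/3$, say. Then $\sum|W_j|\le 3\sum|V_j|\le 3(\delta n+2\delta n)$, which is a bit over $10\delta n$. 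So we tighten the rule: at step $i$ we require $|V_i|\ge \frac12|W_i|$ and $|V_i|\ge 1$, after choosing $u_i,u'_i$ so that $|V_i|\ge|V'_i|$. Since $|W_i|=|V_i|+|V'_i|+|W_i\cap N(u_i)\cap N(u'_i)|$, what we need is that the common part is not too large. With the condition $|V_i|\ge|W_i|/2$ we get $\sum|W_j|\le 2\sum|V_j|<2(\delta n+\delta n)\le 10\delta n$. The bound $m\le5\delta n$ follows if we also insist $|V_i|\ge 1$: we stop after $\sum|V_j|\ge\delta n$, so $m\le\delta n+1$.

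\textbf{Existence of a valid pair.} This is the main obstacle. Suppose $\sum_{j<i}|V_j|<\delta n$, so $|C_{i-1}|\le\sum|W_j|<4\delta n$ is small. Consider $R=V(F)\setminus(A\cup C_{i-1})$, which has size at least $n/2-4\delta n$. Each vertex of $R$ has a neighbor in $A$. We want $u,u'\in A\setminus\{\text{used}\}$ with $N(u)\setminus C_{i-1}$ large relative to $N(u')\setminus C_{i-1}$. The simplest choice is a pair in which $u'$ has no neighbor outside $C_{i-1}$, or at least few common uncovered neighbors with $u$. Pick $u\in A$ with some uncovered neighbor $v\in R$; such $u$ exists since $R\neq\emptyset$. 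Then we need $u'\in A$, unused, with $|N(u)\cap N(u')\setminus C_{i-1}|\le |N(u)\setminus(N(u')\cup C_{i-1})|$.

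Suppose no such $u'$ exists. Then every unused $u'\in A$ shares more than half of $u$'s uncovered neighborhood $S=N(u)\setminus C_{i-1}$, where $|S|\le\delta n$. Counting edges between $S$ and these $u'$, some $v\in S$ is adjacent to more than half of them. The number of unused vertices of $A$ is at least $|A|-2m\ge |A|-3\delta n$, so $d(v)\ge(|A|-3\delta n)/2$. This contradicts $\Delta(F)\le\delta n$ provided $|A|\ge 10\delta n$.

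If instead $|A|<10\delta n$, then all $\ge n/2$ vertices outside $A$ have neighbors in $A$, so some vertex of $A$ has degree at least $n/(20\delta n)\cdot$\dots. More precisely, $e(A,\overline A)\ge n/2$, and hence the average degree in $A$ is at least $1/(20\delta)$. That is not a contradiction by itself, so this sub-case needs care. We would first try to argue it via $\Delta\le\delta n$: $|A|\cdot\delta n\ge n/2$ forces $|A|\ge 1/(2\delta)$, which is only a constant and does not suffice. So in this sub-case we would not take $u'$ from $A$ but rather take $u'$ of small degree. Alternatively, we would replace the counting above by choosing $u$ to maximize $|N(u)\setminus C_{i-1}|$ and $u'$ to minimize it among unused vertices of $A$, and show the gap is at least a constant fraction of $|W_i|$. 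This degenerate regime, where $A$ is small and the common neighborhoods are large, is the step we expect to require the most work.
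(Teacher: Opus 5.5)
\paragraph{Verdict: genuine gap.} Your greedy step works when $|A|\ge 10\delta n$. There the maximum-degree contradiction is valid, and the pair it produces satisfies $|V_i|\ge|W_i|/3$. That is not the $|W_i|/2$ you state, but $1/3$ suffices: the final step overshoots $\sum_j|V_j|$ by at most $d(u_m)\le\delta n$, so $\sum_j|W_j|<6\delta n$.

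\paragraph{The gap.} The gap is the regime $|A|<10\delta n$, which you leave open. It is not a degenerate corner: maximality of $A$ only gives $|A|\cdot\delta n\ge n-|A|$, so $|A|$ can be a constant of order $1/\delta$. In that regime your rule can genuinely fail. Consider the following example.
\begin{itemize}
\item Take $|A|\approx 2.5/\delta$.
\item Let a set $Z$ of $0.6\delta n$ vertices be joined to all of $A$.
\item Give each vertex of $A$ a private set of $0.4\delta n$ further neighbours.
\item Put a disjoint union of triangles (up to divisibility) on $V(F)\setminus A$, so that $\alpha(F)\le n/2$.
\end{itemize}
This satisfies all the hypotheses of Case 2.2. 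At the first step every pair $u,u'\in A$ has $N(u)\cap N(u')=Z$, of size $0.6\delta n$, while $|V_1|=0.4\delta n$. So no pair satisfies your condition, not even $|V_1|\ge|W_1|/3$, although the claim itself is easily met (three pairs suffice). The witness $v\in Z$ from your counting has degree about $|A|$, far below $\delta n$, so there is no contradiction to extract. Moreover, your bound of at most $2\delta n$ used vertices of $A$ is useless once $|A|<2\delta n$, so nothing stops $A$ from being exhausted. Neither fallback repairs this. The vertex $u'$ must lie in $A$: the statement requires it, and the independence of $\{u_i,u'_i\}$ is used afterwards. And choosing $u,u'$ by extreme uncovered degrees gives no control over $|N(u)\cap N(u')|$.

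\paragraph{The missing idea.} You need to discard the vertices outside $A$ that see many vertices of $A$. Since $e(A,V(F)\setminus A)\le|A|\delta n$, at most $5\delta n$ vertices $v\notin A$ have $d_A(v)>|A|/5$; let $B$ be the rest. The paper then proceeds as follows.
\begin{itemize}
\item It imposes the ratio condition only inside $B$, namely $3|V_i\cap B|\ge|W_i\cap B|\ge n/(5|A|)$, with $|\bigcup_i(W_i\cap B)|\le 5\delta n$.
\item It charges the at most $5\delta n$ discarded vertices directly to item 2; this is where $10\delta n$ comes from.
\item The required gain $n/(5|A|)$ per step forces $m\le 15\delta|A|$, so almost all of $A$ stays unused however small $|A|$ is.
\end{itemize}
The next pair then comes from averaging rather than from $\Delta(F)$. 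Let $A'$ be the unused part of $A$ and $B'$ the uncovered part of $B$. Every $v\in B'$ has a neighbour in $A'$, so $e(A',B')\ge|B'|$, and it has at most $|A|/5\le 0.21|A'|$ such neighbours. Double counting over ordered pairs of $A'$ gives $u,u'$ with $|N_{B'}(u)|\ge 0.5|B'|/|A'|$ and $|N_{B'}(u)\cap N_{B'}(u')|\le\frac12|N_{B'}(u)|$. Equivalently, take $u$ maximizing $|N_{B'}(u)|$ and average the overlap over $u'\in A'$. In the example above, $Z$ is exactly the set excluded from $B$.
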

Note that the sets $W_i$ in the statement of the claim are pairwise-disjoint, and that $\bigcup_{i=1}^m W_i = \bigcup_{u \in U}N_F(u)$, where $U := \{u_i,u'_i : i=1,\dots,m\}$. Also, $W_i \cap A = \emptyset$ because $A$ is independent.
\begin{proof}[Proof of Claim \ref{claim:switching vertices}]
    We have $e(A,V(F) \setminus A) \leq |A| \cdot \Delta(F) \leq |A| \cdot \delta n$, and therefore, the number of $v \in V(F) \setminus A$ with $d_A(v) \geq \frac{|A|}{5}$ is at most 
    $5\delta n$. Let $B$ be the set of all $v \in V(F) \setminus A$ with $d_A(v) \leq \frac{|A|}{5}$, so 
    $|B| \geq n - |A| - 5\delta n \geq (0.5-5\delta)n$. 

    Let $U = \{u_i,u'_i : i=1,\dots,m\}$ be a maximal collection of distinct vertices of $A$ satisfying that $3|V_i \cap B| \geq |W_i \cap B| \geq \frac{n}{5|A|}$ for every $i \in [m]$, and that $|W| \leq 5\delta n$, where 
    $W := \bigcup_{i=1}^m (W_i \cap B)$ and $W_i$ is defined as in the statement of the claim. 
    We have $\sum_{i=1}^m |W_i| \leq |W| + 
    |V(F) \setminus (A \cup B)| \leq 10\delta n$. As $W_i \cap B$ are pairwise disjoint and non-empty, it also follows that 
    $m \leq |W| \leq 5\delta n$. 
    Hence, if $\sum_{i=1}^m |V_i| \geq \delta n$ then the statement of the claim holds. 
    So suppose otherwise. 
    Then $|W| = \sum_{i=1}^m |W_i \cap B| \leq 3\sum_{i=1}^m|V_i \cap B| \leq 3\delta n$.
    Note also that $W = \bigcup_{u \in U}(N_F(u) \cap B)$.
    Put $A' := A \setminus U$ and 
    $B' := B \setminus W$. 
    We have $|B'| = |B| - |W| \geq |B| - 3\delta n \geq (1-7\delta)|B|$.
    Since $V_i \cap B$ are pairwise disjoint and 
    $|V_i \cap B| \geq \frac{n}{15|A|}$ for every $i \in [m]$, we have 
    $m \leq \frac{15|A|}{n} \cdot \sum_{i=1}^m |V_i| \leq 15\delta |A|$.
    Hence, $|A'| = |A| - 2m \geq (1-30\delta)|A|$. 
    Moreover, every vertex $v \in B'$ has a neighbor in $A$, and this neighbor cannot be in $U$ (as $v \notin W$), so $v$ has a neighbor in $A'$. Thus, $e(A',B') \geq |B'|$.
    Now consider the expression
    $$
    S := \sum_{u,u' \in A'} \left( |N_{B'}(u)| - 2|N_{B'}(u) \cap N_{B'}(u')| - 
    \frac{0.5|B'|}{|A'|}\right),
    $$
    where the sum is over all ordered pairs of distinct vertices $u,u'$.
    We claim that $S \geq 0$. 
    Indeed, let $P$ denote the number of (ordered) triples $(u,u',v)$ with $u,u' \in A'$, $v \in B'$ and $v \sim u,u'$. Then
    $$
    S = e(A',B') \cdot (|A'|-1) - 2P - 0.5|B'|(|A'|-1).
    $$
    Also, $P \leq e(A',B') \cdot \frac{|A|}{5} \leq e(A',B') \cdot0.21|A'|$, because every $v \in B' \subseteq B$ has at most $\frac{|A|}{5}$ neighbors in $A$. It follows that 
    $S \geq e(A',B') \cdot (|A'|-1) - 0.42 \cdot e(A',B') \cdot |A'| - 0.5|B'|(|A'|-1) \geq 
    0,$
    using $e(A',B') \geq |B'|$.
    Now, as $S \geq 0$, there is a choice of $u,u' \in A'$ for which 
    $|N_{B'}(u)| \geq \frac{0.5|B'|}{|A'|} \geq 
    \frac{n}{5|A|}$ and 
    $|N_{B'}(u)| \geq 2|N_{B'}(u) \cap N_{B'}(u')|$. 
    Without loss of generality, assume that $|N_{B'}(u)| \geq |N_{B'}(u')|$ (else swap $u,u'$).
    Set $u_{m+1} := u$ and $u'_{m+1} := u'$. Note that $W_{m+1} \cap B = N_{B'}(u) \cup N_{B'}(u')$ (because every vertex in $B \setminus B' = W$ is adjacent to $u_j$ or $u'_j$ for some $j \leq m$). Similarly, $V_{m+1} \cap B = N_{B'}(u) \setminus N_{B'}(u')$. 
    It follows that  
    $|W_{m+1} \cap B| \geq |N_{B'}(u)| \geq \frac{n}{5|A|}$ and
    $|W_{m+1} \cap B| = |N_{B'}(u)| + |N_{B'}(u')| - |N_{B'}(u) \cap N_{B'}(u')| \leq 
    2|N_{B'}(u)| - |N_{B'}(u) \cap N_{B'}(u')| \leq 3|N_{B'}(u) \setminus N_{B'}(u')| = 3|V_{m+1} \cap B|$, where the last inequality uses that
    $|N_{B'}(u)| \geq 2|N_{B'}(u) \cap N_{B'}(u')|$. 
    Finally 
    $|\bigcup_{i=1}^{m+1}(W_i \cap B)| = |W| + |W_{m+1}| \leq |W| + 2\Delta(F) \leq 5\delta n$. Hence, we get a contradiction to the maximality of $m$. This proves the claim. 
\end{proof}

Let $U = \{u_i,u'_i : i = 1,\dots,m\}$ be the set of vertices given by Claim \ref{claim:switching vertices}, and let $W_i,V_i,V'_i$ be as in that claim. 
We will obtain a copy of $F$ with high discrepancy by switching the vertices $u_i,u'_i$ for certain indices $i \in [m]$.\footnote{The main difference between the argument we use here and the one used in the proof of Lemma \ref{lem:main}, is that here we will embed the neighborhoods of $u_i,u'_i$ in a certain greedy fashion, rather than randomly. This is done because $m$ may be small, and hence a random embedding may not give us linear discrepancy.} As $G$ is host-$\beta$-good, there are distinct vertices $x_i,x'_i \in V(G)$, $i = 1,\dots,m$, such that 
$0.01 n \leq |N_G(x_i) \triangle N_G(x'_i)| \leq 0.99n$ for every $i \in [m]$.
Without loss of generality, 
$|N_G(x_i) \setminus N_G(x'_i)| \geq \frac{n}{200}$ for every $i \in [m]$.
Put $X := \{x_i,x'_i : i \in [m]\}$. 
In the following claim, we show that $\bigcup_{u \in U}N_F(u)$ can be embedded into $V(K_n)$ so that switching each pair $u_i,u'_i$ will result in a significant change in the number of edges of $G \subseteq K_n$.
\begin{claim}\label{claim:switching greedy embedding}
    There is an injection 
    $g : \bigcup_{u \in U}N_F(u) \rightarrow V(G) \setminus X$ such that for all $i \in [m]$, the quantity
    \begin{equation*}
        D_i := |g(N_F(u_i)) \cap N_G(x_i)| + 
        |g(N_F(u'_i)) \cap N_G(x'_i)| - 
        |g(N_F(u_i)) \cap N_G(x'_i)| - 
        |g(N_F(u'_i)) \cap N_G(x_i)|
    \end{equation*}
    satisfies $|D_i| \geq 0.5|V_i|$.
\end{claim}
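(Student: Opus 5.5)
The plan is to construct $g$ greedily, handling the pairs in the order $i=1,\dots,m$ and defining $g$ on $W_i$ at step $i$. Since the sets $W_i$ are pairwise disjoint and cover $\bigcup_{u\in U}N_F(u)$, this defines an injection on the whole domain. Two sizes control the argument. At every stage the set of used images, including $X$, has size at most $\sum_i|W_i|+2m\le 20\delta n$ by Claim \ref{claim:switching vertices}. Also $|W_i|\le 2\Delta(F)\le 2\delta n$.

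Write $Y_i:=N_G(x_i)\setminus N_G(x'_i)$ and $Y'_i:=N_G(x'_i)\setminus N_G(x_i)$. Then $|Y_i|\ge n/200$ and $|Y_i\cup Y'_i|=|N_G(x_i)\triangle N_G(x'_i)|\le 0.99n$. Split $D_i$ according to where the vertices of $N_F(u_i)\cup N_F(u'_i)$ lie.

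\begin{itemize}
\item A vertex adjacent to both $u_i$ and $u'_i$ contributes $0$ to $D_i$.
\item A vertex of $N_F(u_i)\cup N_F(u'_i)$ outside $W_i$ lies in some $W_j$ with $j<i$. Its image is therefore already fixed when step $i$ begins.
\end{itemize}

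Hence $D_i=c_i+E_i$. Here $c_i$ is determined by earlier steps, and
$$E_i=|g(V_i)\cap Y_i|-|g(V_i)\cap Y'_i|-|g(V'_i)\cap Y_i|+|g(V'_i)\cap Y'_i|$$
depends only on the images of $V_i$ and $V'_i$, exactly as in \eqref{eq:switching single pair 2}. Later steps never change $g$ on $N_F(u_i)\cup N_F(u'_i)$, so $D_i$ is final once step $i$ is done.

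At step $i$ we look at the sign of $c_i$. If $c_i\ge -0.5|V_i|$, map $V_i$ into unused vertices of $Y_i$. This is possible since $|Y_i|-20\delta n\ge n/200-20\delta n\ge\delta n\ge|V_i|$ for $\delta$ small. Map $V'_i$ to unused vertices outside $Y_i$, which exist in abundance. Then $E_i\ge |V_i|-0=|V_i|$, so $D_i\ge 0.5|V_i|$. If instead $c_i<-0.5|V_i|$, map all of $V_i\cup V'_i$ to unused vertices outside $Y_i\cup Y'_i$. There are at least $0.01n-20\delta n\ge 2\delta n\ge|V_i|+|V'_i|$ of them, and this gives $E_i=0$, so $D_i=c_i\le-0.5|V_i|$. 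The remaining vertices of $W_i$ (common neighbours of $u_i,u'_i$) are mapped to arbitrary unused vertices outside $X$.

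The main point to verify is the bookkeeping that $c_i$ is indeed fixed before step $i$, since $g$ on earlier $W_j$ is never revisited, together with the counting that enough free vertices of the required types remain. Both follow from the disjointness of the $W_i$ and the bounds $\sum_i|W_i|\le 10\delta n$ and $m\le 5\delta n$, with $\delta$ small compared to the absolute constant $0.01$.
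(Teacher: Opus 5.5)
Your proposal is correct and is essentially the paper's own greedy argument. You define $g$ on $W_1,\dots,W_m$ in order and split $D_i$ into the part fixed by $\bigcup_{j<i}W_j$ plus the contribution of $V_i,V'_i$; depending on the sign of the fixed part, you either map $V_i$ into $N_G(x_i)\setminus N_G(x'_i)$ or map $V_i\cup V'_i$ outside $N_G(x_i)\triangle N_G(x'_i)$. The only difference is inessential: in the first case you let $V'_i$ go anywhere outside $Y_i$ instead of outside the symmetric difference, which still gives $E_i\ge |V_i|$.
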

\begin{proof}
    Recall that $\bigcup_{u \in U}N_F(u) = \bigcup_{i=1}^m W_i$. We will define the map $g$ step by step; at the $i$th step we will define $g(W_i)$, and at this step we will also make sure that $|D_i| \geq 0.5|V_i|$. So let $1 \leq i \leq m$, and suppose that we already defined $g(W_j)$ for $1 \leq j < i$.
    Let $W = \bigcup_{j < i} W_j$, and note that
    $|W| \leq 10\delta n$ by the second item of Claim \ref{claim:switching vertices}. 
    Set also $Z_i := W \cap N_F(u_i)$ and $Z'_i = W \cap N_F(u'_i)$. Then, setting
    $$
    S := 
    |g(Z_i) \cap N_G(x_i)| 
    + 
    |g(Z'_i) \cap N_G(x'_i)|
    - 
    |g(Z_i) \cap N_G(x'_i)| 
    - 
    |g(Z'_i) \cap N_G(x_i)|
    $$
    and 
    $$
    T := 
    |g(V_i) \cap N_G(x_i)| + 
    |g(V'_i) \cap N_G(x'_i)| - 
    |g(V_i) \cap N_G(x'_i)| - 
    |g(V'_i) \cap N_G(x_i)|,
    $$
    we have
    $
    D_i = S + T.
    $
    Here we used that $W_i \setminus (V_i \cup V'_i) \subseteq N_F(u_i) \cap N_F(u'_i)$, and that the vertices in $N_F(u_i) \cap N_F(u'_i)$ do not contribute to $D_i$, as their contribution cancels out. 
    Note that $S$ is determined by the choice of $g(W_j)$ for $j < i$.
    Now there are two cases: If $S \leq -0.5|V_i|$, then choose $g(V_i),g(V'_i)$ so that $g(V_i),g(V'_i)$ are contained in the complement of $N_G(x_i) \triangle N_G(x'_i)$. Such a choice of $g(V_i),g(V'_i)$ exists with $g(V_i),g(V'_i)$ disjoint to all previously embedded vertices and to $X$. Indeed, the total number of embedded vertices is at most $\sum_{j=1}^m|W_j| \leq 10\delta n$, and also $|X| = 2m \leq 10\delta n$ (here we use Claim \ref{claim:switching vertices}); while on the other hand,
    $\big| \overline{N_G(x_i) \triangle N_G(x'_i)} \big| \geq 0.01n \geq 20\delta n$. 
    Now, $g(V_i),g(V'_i) \subseteq \overline{N_G(x_i) \triangle N_G(x'_i)}$ means that $T = 0$ (because the first term in the definition of $T$ cancels with the third, and the second with the fourth). Hence, $D_i = S \leq -0.5|V_i|$, as required. 

    Now suppose that $S \geq -0.5|V_i|$. Choose $g(V_i)$ to be contained in $N_G(x_i) \setminus N_G(x'_i)$, and choose $g(V'_i)$ to be contained in $\overline{N_G(x_i) \triangle N_G(x'_i)}$. Again, we can choose such $g(V_i),g(V'_i)$ which are disjoint from all previously embedded vertices and from $X$, as $|N_G(x_i) \setminus N_G(x'_i)|, \big| \overline{N_G(x_i) \triangle N_G(x'_i)} \big| \geq \frac{n}{200} \geq 20\delta n$. The choice of $g(V_i),g(V'_i)$ implies that 
    $T = |g(V_i) \cap N_G(x_i)| - |g(V_i) \cap N_G(x'_i)| = |V_i| - 0 = |V_i|$. Hence, $D_i = S+T \geq 0.5|V_i|$, as required.
\end{proof}

We now complete the proof in Case 2.2, and thus the proof of the theorem. 
Let $g$ be the injection given by Claim \ref{claim:switching greedy embedding}.
Let $f_1 : V(F) \rightarrow V(G)$ be a bijection which satisfies $f_1(u_i) = x_i, f_1(u'_i) = x'_i$ for every $i \in [m]$ and agrees with $g$ on $\bigcup_{u \in U}N_F(u)$. (The images of the other vertices of $F$ are arbitrary.) Let $D_i$ be defined as in Claim \ref{claim:switching greedy embedding}. Let $I = \{i \in [m] : D_i \geq |V_i|/2\}$ and 
$J = [m] \setminus I = \{i \in [m] : D_i \leq -|V_i|/2\}$. Without loss of generality, 
$\sum_{i\in I}D_i \geq 0.25\sum_{i=1}^m |V_i|$; the complementary case, i.e. that 
$\sum_{i\in J}D_i \leq -0.25\sum_{i=1}^m |V_i|$, is handled symmetrically. Now, let $f_2$ be obtained from $f_1$ by switching $u_i,u'_i$ for every $i \in I$; namely, $f_2(u_i) = x'_i, f_2(u'_i) = x_i$ for all $i \in I$, and $f_2(v) = f_1(v)$ for all $v \in V(F) \setminus \{u_i,u'_i : i\in I\}$. 
For $i=1,2$, let $F_i = \{f_i(e) : e \in E(F)\}$ be the copy of $F$ corresponding to $f_i$. 
Then 
$$
|E(F_1) \cap E(G)| - |E(F_2) \cap E(G)| = \sum_{i \in I}D_i \geq 0.25\sum_{i=1}^m |V_i| \geq 0.25\delta n \geq 2c n.
$$
Here, the equality uses that $\{u_i,u'_i : i \in [m]\} \subseteq A$ is independent, and the second inequality uses Item 1 of Claim \ref{claim:switching vertices}. Hence, for some $i=1,2$, $F_i$ has discrepancy at least $cn$, as required. 
\end{proof}

\section{$K_k$- and $2$-factors}\label{sec:K_k factor}

    In this section we prove Theorems \ref{thm: Kk-factors} and \ref{prop:2-factor}. It will be convenient to refer to the two colors as red and blue, rather than $1,-1$. The following graphs play an important role in the proof of both theorems. 

    \begin{definition}
    For an integer $m$, let $\mathcal{F}_m$ denote the family of red/blue-colored complete graphs on $2m$ vertices such that one color is either a complete graph of size $m$ or the union of two disjoint complete graphs of size $m$ each.
    \end{definition}

    Note that $\mathcal{F}_m$ has exactly four nonisomorphic elements. Moreover, two of the elements (i.e., those where one of the colors forms a complete graph of size $m$) are the bipartite construction with ratio \nolinebreak $\frac{1}{2}$ (see Definition \ref{def:bipartite construction}).
    
    Our proof strategy for Theorems~\ref{thm: Kk-factors}-\ref{prop:2-factor} builds upon the following theorem by Cutler and Mont{\'a}gh \cite{Unavoidable} stating that as long as a coloring of $K_n$ is not very imbalanced, it contains a member of $\mathcal{F}_m$.
    \begin{theorem}[\cite{Unavoidable}]\label{thm: get small graph}
        For every $m \geq 1$ and $\varepsilon > 0$, there is $n_0=n_0(m,\eps)$ such that the following holds for all $n \geq n_0$. Every red/blue-coloring of $E(K_n)$ with at least $\eps \binom{n}{2}$ edges of each color contains an element of $\mathcal{F}_m$.
    \end{theorem}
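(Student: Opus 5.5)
This statement is cited from Cutler and Mont\'agh, so the plan is to reprove it with a Ramsey-type argument in the spirit of their proof. I would proceed in two stages: first find a large balanced pair of disjoint sets whose cross edges are monochromatic, then clean up the inside of each side with Ramsey's theorem.

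\textbf{Step 1 (a monochromatic complete bipartite pair).} Since both colors have density at least $\varepsilon$, there is a vertex-disjoint pair of sets $A,B$ of size $M$ (a huge constant depending on $m,\varepsilon$) such that all $A$--$B$ edges have the same color, say red. This is not immediate, and it is the step I expect to be the main obstacle. The difficulty is that a red $K_{M,M}$ exists by K\H{o}v\'ari--S\'os--Tur\'an, since red has positive density, but then the two sides may be entirely red inside. In that case we would only obtain a red clique, which is not a member of $\mathcal{F}_m$. So the real content is to use the density of \emph{both} colors. My first attempt would be as follows. Take a large red clique or a large blue clique $K$ of size $L$ by Ramsey's theorem (with $n$ large). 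Then classify every vertex $v$ outside $K$ by its color pattern to $K$; by pigeonhole over a Ramsey-reduced $K$ one may assume each $v$ is nearly monochromatic to a large subclique. If $K$ is red and many outside vertices are blue-dominated to $K$, then among those vertices take a Ramsey clique $C$. If $C$ is blue, then blue edges inside $C$ and between $C$ and $K$, together with red edges inside $K$, give the bipartite construction with ratio $\frac12$ in the blue color. If $C$ is red, then $C$ and $K$ are two red cliques joined by blue, which is the two-cliques member of $\mathcal{F}_m$.

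\textbf{Step 2 (controlling the remaining vertices).} The case in which almost all vertices are red-dominated to every large red clique must be handled using the blue density. I would iterate: the set $R$ of red-dominated vertices, together with the clique, forms a set where red is very dense. The $\varepsilon$-fraction of blue edges then either lies inside the non-dominated part, or lies between dominated vertices and can be reduced to a denser situation. In each case we find a new large blue clique $K'$. Repeating Step 1 with $K'$, either we win as above, or $K$ and $K'$ are joined by red. In that last case $K\cup K'$, after a further Ramsey/Dependent Random Choice cleaning of the cross pairs, yields the bipartite construction or the two-cliques configuration. Quantitatively I would use supersaturation, meaning a positive density of $K_{t,t}$'s in each color, together with Ramsey with $t\gg R(m,m)$.

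\textbf{Step 3 (cleaning inside).} Once $A,B$ with monochromatic red cross edges are obtained, apply Ramsey's theorem inside $A$ and inside $B$ to get monochromatic cliques $A'\subseteq A$, $B'\subseteq B$ of size $m$. If both are red, the whole configuration is a red clique, and we must exclude this by having chosen $A,B$ in Step 1 so that the cross color differs from at least one side color. This is exactly why Step 1 must produce the stronger structure. Otherwise the colorings (red, blue, blue) and (red, red, blue) of the pair and the two sides give, up to swapping colors, precisely the four members of $\mathcal{F}_m$. Taking $n_0$ large enough for all the Ramsey and supersaturation constants then completes the argument.
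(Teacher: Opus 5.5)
The first sub-case of your Step 1 is fine: many vertices are blue-dominated to a red clique $K$, and after pigeonholing over subsets of $K$, all of $C$ sees a common blue $m$-subset of $K$. But Step 2, the case that carries the whole difficulty, is not actually argued, and the reasoning you give there is wrong.

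\begin{itemize}
\item Red-domination of $R$ towards $K$ says nothing about the colors of edges \emph{inside} $R$. So $R\cup K$ need not be red-dense.
\item The claim ``in each case we find a new large blue clique $K'$'' is unsupported. Blue density $\geq\varepsilon$ alone does not even force a blue triangle, since blue may be bipartite.
\item The conclusion can only be rescued by combining the failure of Step 1 for \emph{every} red clique with the blue density. For example, take a blue $K_{s,t}$ with $s$ a suitable Ramsey number. If there is no large blue clique, its $s$-side contains a red $L$-clique whose $t$ common blue neighbors contradict the failure of Step 1. That argument is precisely what your proposal lacks.
\item Appealing to supersaturation does not supply it either. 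A monochromatic $K_{t,t}$ followed by Ramsey inside the two sides (your Step 3 order) may well end in a monochromatic $K_{2m}$.
\end{itemize}

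The clean fix, which is the paper's argument, is to reverse the order: fix the colors of the sides first, and only then choose the cross color.

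\begin{itemize}
\item Apply Ramsey's theorem repeatedly to cover all but $O(1)$ vertices by disjoint monochromatic cliques of a large constant size $M$.
\item If some clique is red and another is blue, the majority color between them has at least $M^2/2$ edges. K\H{o}v\'ari--S\'os--Tur\'an then gives a monochromatic $K_{m,m}$ between them. \emph{Whichever} color it has, the resulting $2m$ vertices form a bipartite construction with ratio $\frac12$, which is a member of $\mathcal{F}_m$. So your ``$K$ and $K'$ joined by red'' case needs no further cleaning.
\item If all cliques are red, no blue edge lies inside a clique, and only $O(n)$ blue edges touch the leftover vertices. By averaging, two cliques have at least $\frac{\varepsilon}{2}M^2$ blue edges between them. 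K\H{o}v\'ari--S\'os--Tur\'an then yields a blue $K_{m,m}$ joining two red $m$-cliques, which is the two-cliques member.
\end{itemize}

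This last case is where the density of the second color is used. It forces the cross color to differ from the side color, which is exactly the structure your Step 1 was trying to reach.
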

    The above result has a simple proof: applying Ramsey's theorem repeatedly, we can partition all but $O(1)$ of the vertices of $K_n$ into monochromatic cliques of size $M$, where $M$ is a large constant compared to $m$. If there are both a red and a blue clique, we get a member of $\mathcal{F}_m$ by applying the K{\H{o}}v{\'a}ri-S{\'o}s-Tur{\'a}n theorem (\cite{kovari-sos-turan}) to the denser color in-between these two cliques. Otherwise, suppose that all cliques are red. By averaging, there exist two red cliques such that the blue bipartite graph in-between them has at least $\eps/2\cdot M^2$ edges. Therefore, the K{\H{o}}v{\'a}ri-S{\'o}s-Tur{\'a}n theorem again gives a member of $\mathcal{F}_m$.
    As demonstrated by Fox and Sudakov \cite{fox08}, a better bound on $n_0$ can be obtained using dependent random choice.
    
    We will apply Theorem~\ref{thm: get small graph} iteratively, removing members of $\mathcal{F}_m$ one-by-one. In the end, we will have a vertex-partition of $K_n$ with one part (called $W$) being potentially large but almost monochromatic, and the other parts (denoted as $V_1,\ldots,V_s$) being constant-sized and having one of four very specific colorings (namely, the graph induced by $V_i$ belongs to $\mathcal{F}_m$ for every $i$). To obtain a factor with high discrepancy, we will then need to understand the kinds of factors contained in the members of $\mathcal{F}_m$.
    
    
    \subsection{$K_k$-factors: Proof of Theorem \ref{thm: Kk-factors}}\label{subsec:K_k-factors}
	First we establish some facts related to the bipartite construction and $\lambda_k$. 
	\begin{lemma}\label{lem:K_k factor in bipartite construction}
		Let $n$ be divisible by $k$, and consider the $n$-vertex bipartite construction with ratio $\rho$ and with parts $X,Y$, where all edges touching $X$ are red and all edges inside $Y$ are blue. Then
		\begin{enumerate}
            \item Writing $|Y| = qk + r$ with $0 \leq r < k$, the largest number of blue edges in a $K_k$-factor is $q\binom{k}{2} + \binom{r}{2} = \frac{k-1}{2}(1-\rho)n + O(1)$.
			\item The largest number of red edges in a $K_k$-factor is 
			\begin{equation}\label{eq:red edges K_k factor bipartite construction}
			\left( \frac{k-1}{2} - \frac{(k-\lfloor k\rho \rfloor-1)}{2}\left( \frac{\lfloor k\rho \rfloor}{k} + 1 - 2\rho \right) \right)n,
			\end{equation}
		\end{enumerate}
	\end{lemma}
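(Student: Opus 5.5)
Both parts reduce to a convex optimization over how the $K_k$-factor splits $Y$ among its cliques. A $K_k$-factor consists of $n/k$ vertex-disjoint cliques $C_1,\dots,C_{n/k}$. Write $j_i := |C_i \cap Y|$, so that $0 \le j_i \le k$ and $\sum_i j_i = |Y| = (1-\rho)n$. Only edges inside $Y$ are blue, so clique $C_i$ contains exactly $\binom{j_i}{2}$ blue edges and $\binom{k}{2}-\binom{j_i}{2}$ red edges. Conversely, every integer vector $(j_i)$ with these constraints is realized by some $K_k$-factor: just distribute the vertices of $Y$ and $X$ among the cliques accordingly. Hence
\[
\max \#\text{blue} = \max \sum_i \binom{j_i}{2}
\quad\text{and}\quad
\max \#\text{red} = \frac{n}{k}\binom{k}{2} - \min \sum_i \binom{j_i}{2},
\]
both taken over integer vectors with $0\le j_i\le k$ and $\sum_i j_i=|Y|$. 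The function $j\mapsto\binom{j}{2}$ is convex on the integers, and I will use this in both parts.

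\textbf{Item 1.} I will use a standard exchange argument. If two indices satisfy $0<j_a\le j_b<k$, then replacing $(j_a,j_b)$ by $(j_a-1,j_b+1)$ does not decrease the sum, because $\binom{j_b+1}{2}-\binom{j_b}{2}=j_b\ge j_a-1=\binom{j_a}{2}-\binom{j_a-1}{2}$. Iterating, some maximizer has at most one index with $0<j_i<k$. So it has $q$ cliques entirely inside $Y$, one clique with $r$ vertices of $Y$, and the remaining cliques inside $X$. This is feasible: if $r>0$ then $|Y|<n$, so $q<n/k$ and there is room for the partial clique. The resulting value is $q\binom{k}{2}+\binom{r}{2}=\frac{k-1}{2}|Y|+O(1)$, since $q\binom{k}{2}=\frac{k-1}{2}(|Y|-r)$.

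\textbf{Item 2.} I will minimize $\sum_i\binom{j_i}{2}$ by the reverse exchange. If $j_b\ge j_a+2$, then replacing $(j_a,j_b)$ by $(j_a+1,j_b-1)$ strictly decreases the sum. So a minimizer has all $j_i$ within $1$ of each other. It is cleaner to parametrize by the $X$-counts $k-j_i$, which all lie in $\{s,s+1\}$ with $s=\lfloor k\rho\rfloor$. Since $n/k$ is an integer, the number of cliques with $s+1$ vertices in $X$ is $|X|-sn/k=(\rho-s/k)n$, an integer. The number of cliques with $s$ vertices in $X$ is then $(1+s-k\rho)n/k$. In the boundary case $\rho=1$ we have $s=k$, and everything degenerates to all cliques lying in $X$, which the formula still covers.

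The remaining step is algebra. The minimum number of blue edges is
\[
\frac{n}{k}\Big[(1+s-k\rho)\binom{k-s}{2}+(k\rho-s)\binom{k-s-1}{2}\Big].
\]
Using $\binom{k-s}{2}=\binom{k-s-1}{2}+(k-s-1)$, this becomes
\[
\frac{n}{k}(k-s-1)\Big[\frac{k-s-2}{2}+1+s-k\rho\Big]
=\frac{k-s-1}{2}\Big(\frac{s}{k}+1-2\rho\Big)n.
\]
Subtracting this from $\frac{n}{k}\binom{k}{2}=\frac{k-1}{2}n$ gives exactly \eqref{eq:red edges K_k factor bipartite construction}.

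The main point needing care is the \emph{integrality and feasibility} of the balanced configuration. One has to check that exactly $(\rho-s/k)n$ cliques can receive $s+1$ vertices of $X$, that this number lies between $0$ and $n/k$, and that the resulting formula is exact rather than only up to $O(1)$. All of this follows from $n/k\in\mathbb{Z}$, $|X|=\rho n$, and $s\le k\rho<s+1$.
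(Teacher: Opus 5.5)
Your proposal is correct and follows essentially the same route as the paper. In both items a convexity-based exchange argument forces the optimal factor to be extremal (Item 1), respectively balanced with $X$-counts in $\{s,s+1\}$ for $s=\lfloor k\rho\rfloor$ (Item 2), and the same algebra then gives the formula; the paper phrases the exchange through the counts $a_i$ of cliques with $i$ vertices in $X$, while you phrase it through the per-clique values $j_i$, and your description of the Item 1 optimum (at most one index with $0<j_i<k$) is stated more precisely than the paper's.
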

	\begin{proof}
		For the first item, consider any $K_k$-factor $\mathcal{F}$, and let $Y_1,\dots,Y_{n/k}$ be the intersections with $Y$ of the $k$-cliques in $\mathcal{F}$. Then $\sum_{i=1}^{n/k}|Y_i| = |Y|$, and the number of blue edges in $\mathcal{F}$ is 
        $\sum_{i=1}^{n/k} \binom{|Y_i|}{2}$. As the function $x \mapsto \binom{x}{2}$ is convex, the sum 
        $\sum_{i=1}^{n/k} \binom{|Y_i|}{2}$ under the constraints $\sum_{i=1}^{n/k}|Y_i| = |Y|$ and $0 \leq |Y_i| \leq k$ is maximized when at most one $|Y_i|$ is not equal to the maximum value, namely $k$.
        Thus, the number of blue edges is at most $q\binom{k}{2} + \binom{r}{2}$, and this value is attainable by taking a $K_k$-factor which decomposes $Y$ into $q$ cliques of size $k$ and one clique of size $r$. 
        
        For the second item, fix a $K_k$-factor $\mathcal{F}$, and for each $0 \leq i \leq k$, let $a_i$ be the number of $k$-cliques in $\mathcal{F}$ with exactly $i$ vertices in $X$. Then $\sum_{i=0}^k a_i = \frac{n}{k}$,
		$\sum_{i=1}^k i \cdot a_i = |X| = \rho n$, and 
		$e_{\text{red}}(\mathcal{F}) = \sum_{i=1}^{k} (\binom{k}{2} - \binom{k-i}{2})a_i$. Suppose that $\mathcal{F}$ maximizes the number of red edges. We claim that then there are no $0 \leq i,j \leq k$ with $j \geq i+2$ and $a_i,a_j > 0$. Suppose otherwise. Let $\mathcal{F'}$ be the $K_k$-factor obtained from $\mathcal{F}$ by removing one $k$-clique intersecting $X$ in $i$ vertices and one $k$-clique intersecting $X$ in $j$ vertices, and instead adding two $k$-cliques intersecting $X$ in $i+1$ and $j-1$ vertices, respectively. Then
		\begin{align*}
        e_{\text{red}}(\mathcal{F}') - e_{\text{red}}(\mathcal{F}) &= -\binom{k-i-1}{2} - \binom{k-j+1}{2} + \binom{k-i}{2} + \binom{k-j}{2} \\ &= (k-i-1) - (k-j) = j-i-1 > 0, 
		\end{align*}
		contradicting the maximality of $\mathcal{F}$. 
        Now, letting $0 \leq j \leq k$ be minimal with $a_j > 0$, we get that $a_i = 0$ for all $i \notin \{j,j+1\}$. So $a_j + a_{j+1} = \frac{n}{k}$ and $j a_j + (j+1)a_{j+1} = \rho n$. Solving this system of equations, we get $a_j = (\frac{j+1}{k}-\rho)n$ and 
		$a_{j+1} = (\rho - \frac{j}{k})n$.
        Since $a_j > 0$ and $a_{j+1} \geq 0$, it follows that 
		$j = \lfloor k\rho \rfloor$. Also, we have
		\begin{equation}\label{eq:red edges K_k-factor bipartite construction aux}
		\begin{split} 
		e_{\text{red}}(\mathcal{F}) &= 
		\left( \binom{k}{2} - \binom{k-j}{2} \right)a_j + 
		\left(\binom{k}{2} - \binom{k-j-1}{2} \right)a_{j+1} \\ &= \binom{k}{2}\frac{n}{k} - \binom{k-j}{2} a_j - \binom{k-j-1}{2}a_{j+1}.
		\end{split}
		\end{equation}
        We calculate:
		\begin{align*}
		\binom{k-j}{2} a_j + \binom{k-j-1}{2}a_{j+1} &= 
		\binom{k-j}{2}\left(\frac{j+1}{k}-\rho \right)n + \binom{k-j-1}{2}\left(\rho - \frac{j}{k}\right)n \\ &= 
		\frac{(k-j-1)n}{2} \cdot \left( (k-j)\left(\frac{j+1}{k}-\rho\right) + (k-j-2)\left(\rho - \frac{j}{k}\right) \right) \\ &=
		\frac{(k-j-1)n}{2}\left( \frac{j}{k} + 1 - 2\rho \right).
		\end{align*}
		By combining this with \eqref{eq:red edges K_k-factor bipartite construction aux} and plugging in $j = \lfloor k\rho \rfloor$, we get
		$$
		e_{\text{red}}(\mathcal{F}) = \left( \frac{k-1}{2} - \frac{(k-\lfloor k\rho \rfloor-1)}{2}\left( \frac{\lfloor k\rho \rfloor}{k} + 1 - 2\rho \right) \right)n,
		$$
  as required.
	\end{proof}
	\noindent
	Using Lemma \ref{lem:K_k factor in bipartite construction}, we can determine $\lambda_k$. We also prove an upper bound on $\lambda_k$ that will be used in the proof of Theorem \ref{thm: Kk-factors}. 
	\begin{lemma}\label{lem:lambda_k}
		For $k \geq 2$, $\lambda_k$ equals $\frac{k-1}{2}(1-\rho_k)$, where $\rho_k$ is the unique solution in $[0,1]$ to the \nolinebreak equation
		\begin{equation}\label{eq:rho equation}
		(k-1)\rho = (k-\lfloor k\rho \rfloor-1)\left( \frac{\lfloor k\rho \rfloor}{k} + 1 - 2\rho \right).
		\end{equation}
		Moreover, $\lambda_k \leq \frac{k-1}{3}$. 
	\end{lemma}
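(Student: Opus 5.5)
By Lemma \ref{lem:K_k factor in bipartite construction}, in the bipartite construction with ratio $\rho$ the best monochromatic count in a $K_k$-factor is $\max(B(\rho),R(\rho))\,n + O(1)$, where
$$B(\rho)=\tfrac{k-1}{2}(1-\rho),\qquad R(\rho)=\tfrac{k-1}{2}-\tfrac{k-\lfloor k\rho\rfloor-1}{2}\Big(\tfrac{\lfloor k\rho\rfloor}{k}+1-2\rho\Big).$$
Hence $\lambda_k=\inf_{\rho\in[0,1]}\max(B(\rho),R(\rho))$; the $O(1)$ term and the restriction to $\rho n\in\mathbb{Z}$ disappear after dividing by $n$ and letting $n\to\infty$, using continuity of $R$. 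The plan is to show that $B$ is strictly decreasing and $R$ is continuous and nondecreasing on $[0,1]$, with $R(0)\le B(0)$ and $R(1)\ge B(1)$. The infimum of the max is then attained at the crossing point. The equation $B=R$ simplifies to $(k-1)\rho=(k-\lfloor k\rho\rfloor-1)(\lfloor k\rho\rfloor/k+1-2\rho)$, which is exactly \eqref{eq:rho equation}, and at the crossing the value is $B(\rho_k)=\frac{k-1}{2}(1-\rho_k)$.

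\textbf{Properties of $R$.}
\begin{itemize}
\item On each interval $\rho\in[j/k,(j+1)/k)$, $R$ is affine in $\rho$ with slope $(k-j-1)\ge 0$, so it is nondecreasing there.
\item At a breakpoint $\rho=(j+1)/k$, the left limit gives $(k-j-1)\big(\tfrac{j}{k}+1-\tfrac{2(j+1)}{k}\big)=(k-j-1)\tfrac{k-j-2}{k}$. The right value gives $(k-j-2)\big(\tfrac{j+1}{k}+1-\tfrac{2(j+1)}{k}\big)$, which is the same quantity. So $R$ is continuous.
\item $R(0)=\frac{k-1}{2}-\frac{k-1}{2}=0<B(0)=\frac{k-1}{2}$.
\item $R(1)=\frac{k-1}{2}$ (here $\lfloor k\rho\rfloor=k$ and the correction vanishes), while $B(1)=0$.
\end{itemize}

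\textbf{Uniqueness of the crossing.} Write $h(\rho)=R(\rho)-B(\rho)$, which equals $\frac{1}{2}$ times the difference between the right and left sides of \eqref{eq:rho equation}, up to sign. On each interval $h$ has slope $(k-j-1)+\frac{k-1}{2}>0$, and $h$ is continuous. So $h$ is strictly increasing and has a unique zero $\rho_k\in[0,1]$. Since $\max(B,R)=B$ on $[0,\rho_k]$, where $B$ is decreasing, and $\max(B,R)=R$ on $[\rho_k,1]$, where $R$ is nondecreasing, the infimum is attained at $\rho_k$ with value $\frac{k-1}{2}(1-\rho_k)$.

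\textbf{The bound $\lambda_k\le\frac{k-1}{3}$.} This is equivalent to $\rho_k\ge\frac13$, i.e.\ to $h(1/3)\le 0$, i.e.\ $R(1/3)\le B(1/3)=\frac{k-1}{3}$. Set $j=\lfloor k/3\rfloor$ and write $k=3j+s$ with $s\in\{0,1,2\}$. Then
$$\frac{j}{k}+1-\frac{2}{3}=\frac{j}{k}+\frac13,\qquad k-j-1=2j+s-1 .$$
So we need
$$(2j+s-1)\Big(\frac{j}{k}+\frac{1}{3}\Big)\ge \frac{k-1}{3}.$$
Multiplying by $3k$, the left side is $(2j+s-1)(3j+k)$ and the right side is $k(k-1)$. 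We check each $s$:
\begin{itemize}
\item $s=0$: $(2j-1)\cdot 6j=12j^2-6j\ge 9j^2-3j$, true for $j\ge1$.
\item $s=1$: $2j(6j+1)\ge(3j+1)\cdot3j$ reduces to $3j^2-j\ge0$, true.
\item $s=2$: $(2j+1)(6j+2)\ge(3j+2)(3j+1)$ reduces to $3j^2+j\ge0$, true.
\end{itemize}
When $j=0$, i.e.\ $k=2$, the inequality is checked directly: the left side is $(s-1)\cdot k=1\cdot 2=2$ and the right side is $k(k-1)=2$, so it holds with equality.

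The main obstacle is the careful bookkeeping of the floor function: the continuity check at the breakpoints and the case split modulo $3$. Both are routine algebra.
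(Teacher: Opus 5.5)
Your proof is correct and follows the same route as the paper. You write $\lambda_k$ as $\min_\rho \max(B,R)$, use that $B$ is decreasing and $R$ nondecreasing to place the minimum at the crossing (which rearranges to the defining equation for $\rho_k$), and get $\lambda_k\le\frac{k-1}{3}$ from $R(1/3)\le B(1/3)$. The differences are minor: you check monotonicity and continuity of $R$ by the explicit piecewise-affine slope computation, which also secures existence of the crossing (the paper argues monotonicity of $g$ combinatorially and leaves continuity implicit), and you verify $R(1/3)\le B(1/3)$ by a case split on $k \bmod 3$ rather than via the paper's identity $g(1/3)=\frac{k-1}{3}+\frac{a}{6k}(3a-2k+3)$.
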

	\begin{proof}
		Let $f(\rho) := \frac{k-1}{2}(1-\rho)$ and $g(\rho) := \frac{k-1}{2} - \frac{k-\lfloor k\rho \rfloor-1}{2}\left( \frac{\lfloor k\rho \rfloor}{k} + 1 - 2\rho \right)$ (as in \eqref{eq:red edges K_k factor bipartite construction}). By Lemma \ref{lem:K_k factor in bipartite construction} and the definition of $\lambda_k$, we have $\lambda_k = \min_{\rho \in [0,1]} \max (f(\rho),g(\rho))$. The function $f$ is decreasing, while the function $g$ is increasing (this can be seen from the fact that $g(\rho)n$ is the maximum number of red edges in a $K_k$-factor of the $n$-vertex bipartite construction with ratio $\rho$, and the set of red edges of this construction grows when increasing $\rho$). It follows that the minimum of $\max (f(\rho),g(\rho))$ is obtained at the unique point $\rho$ where $f(\rho) = g(\rho)$. Rearranging this equation gives \eqref{eq:rho equation}. 
		
		We now prove that $\rho_k \geq \frac{1}{3}$, which would imply that $\lambda_k = \frac{k-1}{2}(1-\rho_k) \leq \frac{k-1}{3}$, as required. 
		To prove that $\rho_k \geq \frac{1}{3}$, it suffices to show that $f(\frac{1}{3}) \geq g(\frac{1}{3})$. 
        Set $a := \lfloor \frac{k}{3} \rfloor$. Then
        $$
        g\left(\frac{1}{3} \right) = \frac{k-1}{2} - \frac{k-a-1}{2}\left(\frac{a}{k} + \frac{1}{3}\right) = 
        \frac{k-1}{3} + \frac{a}{6k} \cdot (3a - 2k + 3).
        $$
        Note that $3a-2k+3 \leq 0$. Indeed, for $k \geq 3$ we have $3a-2k+3 \leq -k+3 \leq 0$, and for $k=2$ we have $a=0$. 
        Hence, $g\left(\frac{1}{3} \right) \leq \frac{k-1}{3} = f\left(\frac{1}{3} \right)$.
	\end{proof}
    \noindent
	See Figure \ref{fig:lambda_k} for the values of $\lambda_k$ and $\rho_k$ for small $k$. 
	\begin{figure}[h]
	\begin{center}
		\begin{tabular}{ c | c | c | c | c | c }
			$k$ & $2$ & $3$ & $4$ & $5$ & $6$ \\ \hline
			$\rho_k$ & $\frac{1}{3}$ & $\frac{1}{3}$ & $\frac{5}{14}$ & $\frac{9}{25}$ & $\frac{4}{11}$ \\ \hline
			$\lambda_k$ & $\frac{1}{3}$ & $\frac{2}{3}$ & $\frac{27}{28}$ & $\frac{32}{25}$ & $\frac{35}{22}$ 
		\end{tabular}
		\caption{First few values of $\rho_k$ and $\lambda_k$.}\label{fig:lambda_k}
	\end{center}
	\end{figure}
    

    Next, we observe that $\rho_k$ is rational and use this to show that the supremum in the definition of $\lambda_k$ is in fact a maximum.
    
	\begin{lemma}\label{lem: existence of bip construction}
        There exists a bipartite construction $B$ on $m$ vertices which contains a $K_k$-factor with at least $\lambda_k m$ red edges as well as a $K_k$-factor with at least $\lambda_k m$ blue edges. 
	\end{lemma}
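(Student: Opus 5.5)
The plan is to take $\rho = \rho_k$ from Lemma \ref{lem:lambda_k} and build the bipartite construction with ratio exactly $\rho_k$ on a suitable number $m$ of vertices. We will check that both the blue bound $f(\rho_k)m$ and the red bound $g(\rho_k)m$ from Lemma \ref{lem:K_k factor in bipartite construction} are achieved exactly, with no $O(1)$ loss. The first step is to show that $\rho_k$ is rational. Set $j := \lfloor k\rho_k \rfloor$. On the interval $[j/k,(j+1)/k)$, equation \eqref{eq:rho equation} is linear in $\rho$:
$$(k-1)\rho = (k-j-1)\left(\tfrac{j}{k}+1-2\rho\right).$$
Its solution is
$$\rho_k = \frac{(k-j-1)(j+k)/k}{(k-1)+2(k-j-1)},$$
a ratio of integers, hence rational. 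We also note $j \le k-1$, and $\rho_k<1$ since $f(1)=0<g(1)$.

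Next we choose $m$. Take $m$ divisible by $k$ such that $\rho_k m$ and $(1-\rho_k)m$ are integers and, in addition, $(1-\rho_k)m$ is divisible by $k$. This is possible by taking $m$ to be $k$ times a common denominator of $\rho_k$ times $k$. Let $X,Y$ be the parts, with $|X|=\rho_k m$ and $|Y| = (1-\rho_k)m$.

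For the blue factor, $Y$ splits into $|Y|/k$ blue $k$-cliques, and the rest of the vertices (namely $X$, of size divisible by $k$) form arbitrary cliques. This gives exactly $\frac{k-1}{2}(1-\rho_k)m = f(\rho_k)m = \lambda_k m$ blue edges; the term $\binom{r}{2}$ vanishes since $r=0$.

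For the red factor, follow the proof of item 2 of Lemma \ref{lem:K_k factor in bipartite construction}. Use $a_j = (\frac{j+1}{k}-\rho_k)m$ cliques with $j$ vertices in $X$ and $a_{j+1} = (\rho_k-\frac jk)m$ cliques with $j+1$ vertices in $X$. These are nonnegative integers because $\rho_k m$ and $m/k$ are integers. Their counts satisfy $a_j + a_{j+1} = m/k$ and $ja_j+(j+1)a_{j+1}=\rho_k m$, so such a factor exists. The computation in that proof is exact given these $a$'s, so the number of red edges is $g(\rho_k)m$. Since $f(\rho_k)=g(\rho_k)=\lambda_k$ by the proof of Lemma \ref{lem:lambda_k}, this equals $\lambda_k m$.

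The main obstacle is bookkeeping integrality. In particular, the boundary case $\rho_k = j/k$ exactly must also work, which it does since then $a_{j+1}=0$. We must also ensure that item 1's formula has no rounding loss, which is handled by requiring $k \mid |Y|$.
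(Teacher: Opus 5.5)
Your proposal is correct and takes essentially the same route as the paper. Both arguments get rationality of $\rho_k$ from the piecewise linearity of \eqref{eq:rho equation}, then build the bipartite construction with ratio $\rho_k$ on $m = kM$ vertices, where $M$ is the denominator of $\rho_k$. The blue factor comes from Item 1 and the red factor from Item 2 of Lemma~\ref{lem:K_k factor in bipartite construction}. Your explicit check that $a_j,a_{j+1}$ are nonnegative integers is a useful detail the paper leaves implicit, and your somewhat garbled choice of $m$ can simply be taken to be $kM$.
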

    \begin{proof}
        Note that \eqref{eq:rho equation} is linear in $\rho$ for $\rho\in[(i-1)/k,i/k)$ for $i\in[k]$. Therefore, $\rho_k$ is a rational number with denominator say $M$. Set $m=kM$ and let $B$ be the bipartite construction on $kM$ vertices with ratio $\rho_k$ and parts $X,Y$, where all the edges touching $X$ are red and all edges inside $Y$ are blue. Note that this exists since $\rho_k M$ is an integer. Furthermore, we have that $|Y| = (1-\rho_k)kM$ is divisible by $k$. By Item 1 of Lemma~\ref{lem:K_k factor in bipartite construction}, we get that $B$ contains a $K_k$-factor with $\frac{k-1}{2}(1-\rho_k) m$ blue edges. By Lemma~\ref{lem:lambda_k}, this is equal to $\lambda_k m$. Finally, by \eqref{eq:rho equation} and Item 2 of Lemma~\ref{lem:K_k factor in bipartite construction}, we get that $B$ contains a $K_k$-factor with $\lambda_k m$ red edges.
    \end{proof}

    It is easy to see that any $m$-vertex bipartite construction (with any ratio $\rho$) is contained in a $2m$-vertex bipartite construction with ratio $\frac{1}{2}$, which is an element of $\mathcal{F}_m$. 
    This makes these elements of $\mathcal{F}_m$ useful for our purpose. 
    Let us now consider the other two elements of $\mathcal{F}_m$, which consist of two disjoint $m$-cliques in one color connected by a complete bipartite graph in the other color. 
    We consider the $K_k$-factors in this construction.
    \begin{lemma}\label{lem: Kk factor in two cliques}
        Let $C$ be the red/blue-colored complete graph on $2m$ vertices with $m$ divisible by $k$ such that the red edges induce two disjoint complete graphs of size $m$ each. Then $C$ contains a $K_k$-factor with $\frac{2m}{k}\cdot \binom{k}{2}$ red edges as well as a $K_k$-factor with $\frac{2m}{k}\cdot \lceil \frac{k}{2} \rceil\lfloor \frac{k}{2} \rfloor$ blue edges.
    \end{lemma}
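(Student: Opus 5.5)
Both claimed factors can be written down explicitly; no earlier results are needed beyond elementary counting. Let the two red cliques be on vertex sets $A$ and $B$, with $|A|=|B|=m$. All edges inside $A$ and inside $B$ are red, and all edges between $A$ and $B$ are blue.

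For the red factor, I use that $k$ divides $m$. Partition $A$ into $m/k$ sets of size $k$, and likewise partition $B$. Each part spans a red $K_k$, so together these $2m/k$ cliques form a $K_k$-factor. All of its edges are red, giving $\frac{2m}{k}\binom{k}{2}$ red edges, as claimed.

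For the blue factor, each clique of the factor should take $\lceil k/2\rceil$ vertices from one side and $\lfloor k/2\rfloor$ from the other. Such a clique contains exactly $\lceil k/2\rceil\lfloor k/2\rfloor$ blue edges, namely those between $A$ and $B$. The only issue is to make the vertex counts match.

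If $k$ is even, I take $2m/k$ cliques, each with $k/2$ vertices in $A$ and $k/2$ in $B$. This is possible since $(2m/k)\cdot(k/2)=m$.

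If $k$ is odd, I take $m/k$ cliques of type $\bigl(\lceil k/2\rceil,\lfloor k/2\rfloor\bigr)$ and $m/k$ cliques of type $\bigl(\lfloor k/2\rfloor,\lceil k/2\rceil\bigr)$, where the type records the number of vertices in $A$ and in $B$. Each side then receives $\frac{m}{k}\bigl(\lceil k/2\rceil+\lfloor k/2\rfloor\bigr)=m$ vertices, so $A$ and $B$ are exactly covered. There are $2m/k$ cliques in total, each with $\lceil k/2\rceil\lfloor k/2\rfloor$ blue edges, which gives the stated count.

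There is no real obstacle. The only point needing care is this balancing in the odd case, and it is exactly where the assumption that $k$ divides $m$ is used.
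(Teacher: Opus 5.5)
Your proposal is correct and follows the same construction as the paper: cliques inside each red clique for the red factor, and cliques split $\lceil k/2\rceil$/$\lfloor k/2\rfloor$ across the two sides for the blue factor. You additionally spell out the balancing for odd $k$ (alternating which side receives $\lceil k/2\rceil$), which the paper leaves implicit.
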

    \begin{proof}
        Let $X,Y$ denote the vertex-set of the two complete graphs induced by the red edges. The former $K_k$-factor is found by having each copy of $K_k$ be contained in either $X$ or $Y$, the latter by having each copy of $K_k$ intersect one of $X,Y$ in $\lceil \frac{k}{2} \rceil$ and the other in $\lfloor \frac{k}{2} \rfloor$ vertices.
    \end{proof}
    \noindent
    We are now ready to prove Theorem \ref{thm: Kk-factors}. 
    \begin{proof}[Proof of Theorem~\ref{thm: Kk-factors}]
        Fix any red/blue edge-coloring of $K_n$. Let $\varepsilon > 0$ be arbitrary, and let us show that if $n$ is large enough, then there is a $K_k$-factor with at least $(\lambda_k - \varepsilon)n$ edges of the \nolinebreak same \nolinebreak color. 
        
        Let $B$ be the bipartite construction given by Lemma~\ref{lem: existence of bip construction}, $m$ be the number of vertices in $B$, and $\overline{B}$ be the coloring of $K_m$ obtained by switching the colors in $B$. 
        Let $D$ be the red/blue-colored $K_{2m}$ where the blue edges form a clique of size $m$, and let $\overline{D}$ be the coloring obtained from $D$ by switching the colors (so $D,\overline{D}$ are bipartite constructions with ratio $\frac{1}{2}$). Observe that $B$ is contained in $D$ and $\overline{B}$ is contained in $\overline{D}$.
        Finally, let $C$ be as in Lemma~\ref{lem: Kk factor in two cliques} on $2m$ vertices and $\overline{C}$ obtained by switching the colors in $C$. 
        Then $\mathcal{F}_m = \{D,\overline{D},C,\overline{C}\}$.
        Hence, by Theorem~\ref{thm: get small graph}, either some color has at most $\eps \binom{n}{2}$ edges, or the coloring contains an element of $\mathcal{F}_{2m}$, and thus contains a copy $H$ of $B$, $\overline{B}$, $C$ or $\overline{C}$. 
        We write $V_1=V(H)$ and remove $V_1$ from the complete graph. We repeat this for as long as possible, eventually getting disjoint sets of vertices $V_1,\ldots,V_s$ and a set $W$ of remaining vertices such that each $V_i$ induces a copy of $B$, $\overline{B}$, $C$ or $\overline{C}$. Additionally, we have that either $|W|\leq n_0$ or some color $c$ has at most $\eps \binom{|W|}{2}$ edges on $W$. If we are in the latter case, without loss of generality, suppose that $c$ is blue. Let $I_{red}\subseteq [s]$ be the set of indices $i$ such that $V_i$ induces a copy of $C$, define $I_{blue}$ similarly with $\overline{C}$, and let $J=[s]\setminus (I_{red}\cup I_{blue})$. Let $M = 2m|I_{red}|+2m|I_{blue}|+|W|$ and set
        $$
        \alpha := \frac{2m|I_{blue}|}{M}.
        $$ 
        Note that $m|J| = n-M$. Based on the value of $\alpha$, we now construct two different $K_k$-factors. 
        \paragraph{Case 1:} $\alpha > 2/3$. We construct a $K_k$-factor with many blue edges. By Lemma~\ref{lem: existence of bip construction}, for each $i\in J$, there exists a $K_k$-factor $F_i$ on $V_i$ with at least $\lambda_k m$ blue edges. By Lemma~\ref{lem: Kk factor in two cliques}, for $i\in I_{blue}$, there exists a $K_k$-factor $F_i$ with $\frac{2m}{k}\cdot \binom{k}{2}$ blue edges. Let $F$ be any extension of the union of the above partial $K_k$-factors to a $K_k$-factor. We get that the number of blue edges in $F$ is at least 
        \begin{align*}
            |J|\cdot \lambda_km + |I_{blue}|\cdot \frac{2m}{k} \cdot\binom{k}{2}
            =(n-M)\cdot \lambda_k +\alpha M \cdot \frac{k-1}{2}
            >(n-M)\cdot \lambda_k +M\cdot \frac{k-1}{3} \geq 
            \lambda_k n,
        \end{align*}
        where the last inequality uses that $\lambda_k \leq \frac{k-1}{3}$ by Lemma~\ref{lem:lambda_k}. This completes the proof in Case 1.
        \paragraph{Case 2:} $\alpha \leq 2/3$. We construct a $K_k$-factor with many red edges. By Lemma~\ref{lem: existence of bip construction}, for each $i\in J$, there exists a $K_k$-factor $F_i$ on $V_i$ with at least $\lambda_k m$ red edges. Using Lemma~\ref{lem: existence of bip construction} twice, we get that for $i\in I_{red}$, there exists a $K_k$-factor $F_i$ on $V_i$ with $2m/k\cdot \binom{k}{2}$ red edges and, for $i\in I_{blue}$, there exists a $K_k$-factor $F_i$ on $V_i$ with $2m/k\cdot \lceil k/2\rceil\lfloor k/2\rfloor$ red edges. Finally, by Lemma~\ref{lem: random embedding}, there exists a $K_k$-factor $F_W$ on $W$ with at least $(1-\eps)|W| \cdot \frac{k-1}{2} -n_0 \cdot \frac{k-1}{2}$ red edges.
        We get that the number of red edges is at least
        \begin{align}\label{eq:K_k-factor Case 2}
            &|J|\cdot \lambda_k m + |I_{red}|\cdot \frac{2m}{k} \cdot \binom{k}{2}+|I_{blue}|\cdot \frac{2m}{k} \cdot\left\lfloor \frac{k}{2} \right\rfloor 
            \left\lceil \frac{k}{2} \right\rceil+
            (1-\eps)|W| \cdot \frac{k-1}{2} -n_0 \cdot \frac{k-1}{2}.
        \end{align}
        We now plug into \eqref{eq:K_k-factor Case 2} the following facts: $m|J| = n-M$; $2m|I_{blue}| = \alpha M$ (by definition of $\alpha$); $2m|I_{red}| = M - |W| - 2m|I_{blue}| = (1-\alpha)M - |W|$, and therefore $|I_{red}| \cdot \frac{2m}{k} \cdot \binom{k}{2} = ((1 - \alpha)M - |W|) \cdot \frac{k-1}{2}$; and finally, $n_0 \cdot \frac{k-1}{2} = O(1)$. Plugging all of these, we see that \eqref{eq:K_k-factor Case 2} is at least
        \begin{align}\label{eq:K_k-factor Case 2 (ii)}
            &(n-M)\lambda_k+
            (1-\alpha)M \cdot \frac{k-1}{2} + \alpha M \cdot \frac{1}{k}\cdot\left\lfloor \frac{k}{2} \right\rfloor 
            \left\lceil \frac{k}{2} \right\rceil- \eps |W| \cdot \frac{k-1}{2} - O(1).
        \end{align}
        Note that $\eps |W| \cdot \frac{k-1}{2} + O(1) \leq \varepsilon kn$. Also, as 
        $\frac{k-1}{2} \geq \frac{1}{k} \cdot \left\lfloor \frac{k}{2} \right\rfloor 
        \left\lceil \frac{k}{2} \right\rceil$ for $k \geq 2$, the function 
        $$f(\alpha) := (1-\alpha) \cdot \frac{k-1}{2} + \alpha \cdot \frac{1}{k} \cdot \left\lfloor \frac{k}{2} \right\rfloor 
        \left\lceil \frac{k}{2} \right\rceil$$ is monotone decreasing in $\alpha$. As $\alpha \leq 2/3$, we have 
        $f(\alpha) \geq f(2/3) \geq \frac{k-1}{6} + \frac{2}{3} \cdot \frac{1}{2} \cdot \left\lfloor \frac{k}{2} \right\rfloor \geq \frac{k-1}{6} + \frac{k-1}{6} = \frac{k-1}{3} \geq \lambda_k$, using Lemma \ref{lem:lambda_k}.
        Plugging this into \eqref{eq:K_k-factor Case 2 (ii)}, we see that \eqref{eq:K_k-factor Case 2 (ii)} is at least
        $$
        (n-M)\lambda_k + \lambda_k \cdot M - \varepsilon kn = (\lambda_k - \varepsilon k)n.
        $$
        As $\varepsilon > 0$ was arbitrary, this completes the proof. 
    \end{proof}

    \subsection{$2$-factors: Proof of Theorem \ref{prop:2-factor}}
    We want to proceed similarly as in the proof for $K_k$-factors. However, a $2$-factor may not nicely embed into the constant sized parts $V_1,\ldots,V_s$ as some of its cycles may be very long. Thus, we first need to cut the cycles up into smaller parts while losing only a small number of edges, so that we can embed each of these now constant-sized parts separately. By choosing $k$ large enough in the following lemma, we ensure that the loss of edges is not substantial.
    \begin{lemma}\label{lem: cycle partition}
        Let $k$ be an integer and $F$ be a $2$-factor on $n$ vertices. Then there exists a partition of $V(F)$ into parts $U_0,U_1,\ldots,U_{\lfloor n/k\rfloor}$ such that $|U_i|=k$ for all $1 \leq i \leq \lfloor n/k \rfloor$ (hence $|U_0| < k$), and such that for every $0 \leq i \leq \lfloor n/k \rfloor$, the graph induced by $U_i$ is a disjoint union of cycles and at most $2$ paths. In particular, for every $1 \leq i \leq \lfloor n/k \rfloor$, $U_i$ contains at least $k-2$ edges. 
    \end{lemma}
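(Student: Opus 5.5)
I would build the partition greedily by walking through the cycles of $F$ one after another. First fix an ordering $C_1,\dots,C_r$ of the cycles of $F$. For each cycle $C_j$, fix a starting vertex and an orientation, so that $V(C_j)$ is listed as a cyclic sequence $v^j_1,\dots,v^j_{\ell_j}$. Concatenating these sequences gives a linear order $w_1,\dots,w_n$ of $V(F)$ with the following property: every cycle occupies a contiguous interval, and consecutive vertices inside an interval are adjacent in $F$. Then set
\[
U_i := \{w_{(i-1)k+1},\dots,w_{ik}\} \quad\text{for } 1 \leq i \leq \lfloor n/k \rfloor,
\]
and let $U_0$ be the remaining tail $\{w_{k\lfloor n/k\rfloor+1},\dots,w_n\}$. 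So each part is a contiguous interval of length $k$, and $U_0$ is an interval of length less than $k$.

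Next I would verify the structure of $F[U_i]$ for an interval $I=U_i$. The cycles meeting $I$ form a consecutive run $C_a,\dots,C_b$. Every cycle strictly between $C_a$ and $C_b$ lies entirely inside $I$, so it induces itself, a cycle. The cycles $C_a$ and $C_b$ may each be only partially contained in $I$. The part of $C_a$ inside $I$ is a final segment $v^a_s,\dots,v^a_{\ell_a}$ of its cyclic listing, and the part of $C_b$ is an initial segment $v^b_1,\dots,v^b_t$.

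A proper contiguous segment of a cycle induces a path. A segment equal to the whole cycle induces the cycle. One degenerate case needs a remark: if $a=b$, the intersection is a single contiguous segment of one cycle, which again induces a path or the whole cycle. In that case the only other thing to check is that $F[I]$ contains no edges other than those of the segment. This holds because a cycle's induced subgraph on a proper arc is just the arc, since the chord-free cycle $C_j$ has no other edges. Hence $F[U_i]$ is a disjoint union of cycles and at most two paths. There are also no edges between different cycles, as they are distinct components of $F$.

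Finally, count edges. A disjoint union of cycles and $p \leq 2$ paths on $k$ vertices has exactly $k-p \geq k-2$ edges, since each cycle contributes as many edges as vertices and each path one fewer. Here a single vertex counts as a path with zero edges.

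There is no real obstacle. The only step needing care is the boundary bookkeeping: the case $a=b$, and cycles of length $3$ cut into a single vertex or an edge. Both are still paths, so the count $k-p$ is unaffected.
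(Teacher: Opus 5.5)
Your proposal is correct and is essentially the paper's proof: list the vertices so that each cycle occupies a contiguous block in cyclic order, then cut this list into consecutive intervals of length $k$, with the leftover tail as $U_0$. Your verification (at most two partially covered cycles per interval, each inducing a path, giving $k-p\ge k-2$ edges) spells out what the paper leaves implicit. Your tail $U_0=\{w_{k\lfloor n/k\rfloor+1},\dots,w_n\}$ is also indexed correctly; the paper's $(k-1)\lfloor n/k\rfloor+1$ is a typo.
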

    \begin{proof}
        Write $V(F) = \{v_1,\ldots,v_n\}$ such that the vertices of each cycle in $F$ appear consecutively and in the corresponding order. Let $U_i = \{v_{(i-1)k+1},\ldots,v_{ik}\}$ for $i \geq 1$, and 
        $U_0 = \{v_{(k-1)\lfloor n/k\rfloor+1},\ldots,v_n\}$.
    \end{proof}
    As in the proof of Theorem~\ref{thm: Kk-factors}, we need to investigate the different ways of embedding $2$-factors into the elements of $\mathcal{F}_m$ or their subgraphs. We again consider an imbalanced bipartite construction.
    \begin{lemma}\label{lem: bip construction 2-factor}
        Let $F$ be a $2$-factor on $3k$ vertices. A bipartite construction $B$ with ratio $\frac{1}{3}$ on $3k$ vertices contains a copy of $F$ with at least $2k-1$ red edges and a copy of $F$ with at least $2k-1$ blue edges.
    \end{lemma}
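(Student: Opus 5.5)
We prove the two halves separately. Let $X,Y$ be the parts of $B$, with $|X| = k$ and $|Y| = 2k$. All edges touching $X$ are red and all edges inside $Y$ are blue. Write $F$ as a disjoint union of cycles $C_1,\dots,C_r$ of lengths $\ell_1,\dots,\ell_r \geq 3$, with $\sum \ell_j = 3k$.

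\emph{Blue edges.} The blue edges of a copy of $F$ are exactly the edges of $F$ lying inside the $2k$-set $Y$. So the plan is to choose a $2k$-subset $S\subseteq V(F)$ with $e_F(S) \geq 2k-1$ and map $S$ onto $Y$. Order the vertices cycle by cycle, each cycle traversed consecutively, as in Lemma \ref{lem: cycle partition}, and let $S$ be the first $2k$ vertices in this order. Then $S$ consists of some full cycles plus one initial segment of a single cycle, which is a path. If this segment has $s$ vertices, then $e_F(S) = 2k - 1$ when $0<s<\ell_j$, and $e_F(S) = 2k$ when $S$ is a union of whole cycles. This gives at least $2k-1$ blue edges.

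\emph{Red edges.} A red edge is one with at least one endpoint in $X$. The number of red edges equals $e(F) - e_F(V(F)\setminus T)$, where $T$ is the $k$-set mapped onto $X$. So we need a $k$-set $T$ such that the complement $S = V(F)\setminus T$ spans at most $k+1$ edges; then there are at least $3k-(k+1) = 2k-1$ red edges.

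Equivalently, we want $T$ to cover many edges. Each vertex covers $2$ edges, so the number of red edges is $2k - e_F(T)$. We therefore need $T$ with $e_F(T)\leq 1$: a set of $k$ vertices of $F$ that is independent, or spans at most one edge.

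To build $T$, take on each cycle $C_j$ of length $\ell_j$ the independent set of alternate vertices. This has size $\lfloor \ell_j/2\rfloor \geq \ell_j/3$, since $\lfloor 3/2\rfloor = 1 = 3/3$, $\lfloor 4/2 \rfloor = 2 \geq 4/3$, and $\lfloor \ell/2\rfloor \geq \ell/3$ for all $\ell\geq 3$. Summing over the cycles gives an independent set of size at least $\sum \ell_j/3 = k$. Take any $k$ of its vertices. This yields $2k$ red edges, which is even better than required.

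The only subtle point is the equality case and integrality bookkeeping, which is easy. The main, though mild, obstacle is the blue side, where the splitting of one cycle costs exactly one edge, giving $2k-1$. The red side is then essentially immediate from $\alpha(C_\ell)\geq \ell/3$.
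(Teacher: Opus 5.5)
Your proof is correct and follows the paper's argument. You embed the first $2k$ vertices of a cycle-by-cycle ordering into the $2k$-clique, losing at most one edge, and you map an independent set of size $k$ onto the $k$-side so that all $2k$ edges touching it receive that side's color; the only cosmetic difference is that the paper obtains the independent set from the $3$-colorability of $F$, while you build it cycle by cycle via $\lfloor \ell/2\rfloor \ge \ell/3$.
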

    \begin{proof}
        Without loss of generality, suppose that the red edges form a complete graph on $2k$ vertices in $B$. Write $V(F) = \{v_1,\ldots,v_{3k}\}$ such that the vertices of each cycle in $F$ appear consecutively and in the corresponding order. Consider an embedding of $F$ into $B$ such that $v_1,\ldots,v_{2k}$ get mapped into the red complete graph. It is easily seen that this gives an embedding of $F$ with at least $2k-1$ red edges. On the other hand, $F$ has an independent set $S$ of size $k$ since it is $3$-colorable. Now, consider an embedding of $F$ into $B$ such that $V(F)\setminus S$ gets mapped into the red complete graph. As $F$ is $2$-regular, this gives an embedding of $F$ with $2k$ blue edges.
    \end{proof}
    \begin{lemma}\label{lem: 2 cliques 2-factor}
        Let $F$ be a $2$-factor on $4k$ vertices and let $C$ be a red/blue-coloring of $K_{4k}$ such that the red edges form two disjoint cliques of size $2k$ each. Then $C$ contains a copy of $F$ with at least $4k-2$ red edges and a copy of $F$ with at least $8k/3$ blue edges.
    \end{lemma}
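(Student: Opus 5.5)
Let $X,Y$ be the two red cliques, $|X|=|Y|=2k$; all $X$--$Y$ edges are blue. The two parts of the statement are handled separately, and both reuse the ordering trick from Lemma~\ref{lem: bip construction 2-factor}.

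\emph{Many red edges.} List $V(F)=\{v_1,\dots,v_{4k}\}$ so that the vertices of each cycle of $F$ appear consecutively and in cyclic order. Map $v_1,\dots,v_{2k}$ into $X$ and $v_{2k+1},\dots,v_{4k}$ into $Y$. Each of these two sets of consecutive vertices induces in $F$ a disjoint union of cycles and at most one path (the one cut at position $2k$). Hence each set spans at least $2k-1$ edges of $F$, and all of them land inside a red clique. This gives at least $4k-2$ red edges, as in the proof of Lemma~\ref{lem: cycle partition}.

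\emph{Many blue edges.} The blue edges form $K_{2k,2k}$, so we need a balanced bipartition $V(F)=S\cup T$ with $|S|=|T|=2k$ and $e_F(S,T)\ge 8k/3$. Equivalently, at most $4k-8k/3=4k/3$ edges of $F$ may lie inside $S$ or inside $T$. I would handle this cycle by cycle. An even cycle is coloured alternately, so it is completely cut and contributes equally to both sides. An odd cycle of length $\ell$ is coloured alternately except for one monochromatic edge, which makes it imbalanced by exactly $1$. We choose which side receives the extra vertex, and choosing alternately keeps the imbalance $|\,|S|-|T|\,|$ at most $1$. Since $4k$ is even, the number of odd cycles is even, so the imbalance is in fact $0$ after pairing them up. This gives $e_F(S,T)=4k-(\#\text{odd cycles})$. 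The number of odd cycles is at most $4k/3$, because each has length at least $3$. So $e_F(S,T)\ge 4k-4k/3=8k/3$. Now embed $S$ into $X$ and $T$ into $Y$.

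The only step needing care is the balancing of odd cycles. Each odd cycle yields a set with one more vertex on one side, and we may flip which side it is. Pairing the odd cycles off, with opposite orientations within each pair, gives exact balance. Everything else is immediate.
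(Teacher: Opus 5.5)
Your proof is correct and follows the paper's argument. The red copy uses the same consecutive-ordering embedding, which loses at most the two edges of the cycle split at position $2k$. The blue copy uses the same idea of dropping one edge per odd cycle, and pairs the evenly many odd cycles with opposite orientations to get a balanced bipartition with $e_F(S,T)\ge 4k-4k/3$; you spell out this balancing step more explicitly than the paper does.
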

    \begin{proof}
        Let $X,Y$ denote the vertex set of the two red cliques.
        Write $V(F) = \{v_1,\ldots,v_{4k}\}$ such that the vertices of each cycle in $F$ appear consecutively and in the corresponding order. Consider an embedding of $F$ into $B$ such that $v_1,\ldots,v_{2k}$ get mapped into $X$. Note that all the cycles of $F$ which do not contain $v_{2k}$ are embedded into either $X$ or $Y$ and, hence, entirely red. The cycle of $F$ containing $v_{2k}$ possibly contains two blue edges as it may be split between $X$ and $Y$. Hence, this embedding of $F$ has at least $4k-2$ red edges.         

        Recall that the blue edges form a balanced complete bipartite graph in $C$. For the second embedding, note that $4k$ is even, and hence the number of odd cycles in $F$ is even as well. Then, we can make $F$ balanced bipartite by removing one edge from every odd cycle, yielding an embedding into the subgraph of $C$ induced by the blue edges. Since each cycle has length at least $3$, it follows that we removed at most $4k/3$ edges. Hence, the embedding has at least $e(F) - 4k/3 = 8k/3$ blue \nolinebreak edges.
    \end{proof}
    \begin{proof}[Proof of Theorem~\ref{prop:2-factor}]
        Fix an arbitrary $\eps>0$ and set $k=\lceil 6/\eps\rceil$. We show that for $n$ large enough, every red/blue-coloring of $K_n$ contains a copy of $F$ with at least $(2/3-\eps)n$ edges in some color. Let $B,\overline{B}$ be the two bipartite constructions on $3k$ vertices as in Lemma~\ref{lem: bip construction 2-factor}. 
        Let $D,\overline{D}$ be the bipartite constructions on $4k$ vertices with ratio $\frac{1}{2}$ (so $D$ contains $B$ and $\overline{D}$ contains $\overline{B}$).
        Let $C$ be as in Lemma~\ref{lem: 2 cliques 2-factor} and let $\overline{C}$ be obtained from $C$ by switching the colors (so $C,\overline{C}$ have $4k$ vertices). Then $\mathcal{F}_{2k} = \{D,\overline{D},C,\overline{C}\}$.
        
        Let $U_0,U_1,\ldots,U_{\lfloor n/k\rfloor}$ be a partition of $V(F)$ as in Lemma~\ref{lem: cycle partition}. 
        Then there exists a $2$-factor $F'$ on $V(F)$, obtained from $F$ by deleting and adding at most $4 \left\lceil \frac{n}{k} \right\rceil \leq \frac{5n}{k}$ edges, such that each $U_i$ ($0 \leq i \leq \lfloor \frac{n}{k} \rfloor$) induces a $2$-factor in $F'$. 
        
        Considering the given red/blue coloring of $K_n$, we get by Theorem~\ref{thm: get small graph} that either some color has at most $\frac{1}{3}\cdot \binom{n}{2}$ edges, or the coloring contains an element of $\mathcal{F}_{2k}$, and hence a copy $H$ of $B$, $\overline{B}$, $C$ or $\overline{C}$. 
        We write $V_1=V(H)$ and remove $V_1$ from the vertices of the complete graph. Repeating this for as long as possible, we eventually get disjoint sets of vertices $V_1,\ldots,V_s$ and a set $W$ of remaining vertices such that each $V_i$ induces either $B$, $\overline{B}$, $C$ or $\overline{C}$. Additionally, we have that either $|W|\leq n_0$ or some color $c$ has at most $\frac{1}{3} \binom{|W|}{2}$ edges on $W$. If we are in the latter case, without loss of generality, suppose that $c$ is blue. Let $I_{red}\subseteq [s]$ be the set of indices $i$ such that $V_i$ induces a copy of $C$, define $I_{blue}$ similarly for $\overline{C}$, and let $J=[s]\setminus (I_{red}\cup I_{blue})$. 
        
        We construct a copy of $F'$ with many red edges. 
        Note that $n-|W| = 3k \cdot |J| + 4k \cdot (|I_{red}| + |I_{blue}|)$. Fix a subset $I \subseteq \{1,\dots,\lfloor \frac{n}{k}\rfloor\}$ of size $\frac{n-|W|}{k}$, and fix $\tau : I \rightarrow [s]$ such that for every $j \in [s]$, $|\tau^{-1}(j)| = |V_j|/k$ (namely, $|\tau^{-1}(j)| = 3$ if $j \in J$ and $|\tau^{-1}(j)|=4$ if $j \in I_{red}\cup I_{blue}$). In other words, we assign to each $V_j$ a collection of 3 or 4 of the sets $U_i$ to be embedded into $V_j$. 
        
        For each $j\in [s]$, let $F_j$ denote the subgraph of $F'$ induced by $\bigcup_{i\in\tau^{-1}(j)}U_i$, and let $F_W$ be the subgraph of $F'$ obtained by deleting $\bigcup_{j \in [s]}V(F_j)$. Note that $F_j$ is a $2$-factor for every $j \in [s]$, and so is $F_W$. By Lemma~\ref{lem: bip construction 2-factor}, for each $j\in J$, there exists an embedding of $F_j$ into $V_j$ with at least $2k-1$ red edges. By Lemma~\ref{lem: 2 cliques 2-factor}, for every $i\in I_{red}$ there exists an embedding of $F_j$ into $V_j$ with at least $4k-2$ red edges, and for every $i\in I_{blue}$ there exists an embedding of $F_j$ into $V_j$ with at least $8k/3$ red edges. Finally, by Lemma~\ref{lem: random embedding}, there exists an embedding of $F_W$ into $W$ with at least 
        $2/3\cdot e(F_W) - n_0 = 2/3\cdot|W| - n_0$ red edges. Combining all these embeddings to an embedding of $F'$, we get a copy of $F'$ in which the number of red edges is at least
        \begin{align*}
            &|J|\cdot (2k-1)+|I_{red}|\cdot (4k-2)+|I_{blue}|\cdot \frac{8k}{3}+\frac{2}{3}\cdot|W| - O(1)
            \geq \\ & \frac{2}{3}\cdot\bigg(|J|\cdot 3k + |I_{red}|\cdot 4k+|I_{blue}|\cdot 4k+|W|\bigg)-|J| - O(1) = \frac{2n}{3} - |J| - O(1).
        \end{align*}
        Note that $|J| \leq \frac{n}{3k}$. Hence, the above is at least $(\frac{2}{3} - \frac{1}{k})n$. 
        Now, since $F$ can be obtained from $F'$ by deleting and adding at most $\frac{5n}{k}$ edges, this gives an embedding of $F$ into $K_n$ with at least $(\frac{2}{3} - \frac{6}{k})n \geq (\frac{2}{3} - \varepsilon)n$ red edges, as required. 
    \end{proof}

\section{Concluding remarks}
In this paper we proved lower bounds on the discrepancy of graphs with bounded maximum degree (Theorem~\ref{thm:max degree}), regular graphs (Theorem~\ref{thm:d-regular}), $K_k$-factors (Theorem~\ref{thm: Kk-factors}) and $2$-factors (Theorem~\ref{prop:2-factor}). Each of these bounds is in some sense tight.

The natural general question underlying these results is to find (or estimate), for each $n$-vertex graph $F$, the maximum $t = t(F)$ such that every 2-coloring of $K_n$ has a copy of $F$ with discrepancy at least $t$. 
Short of a complete solution, it would also be interesting to understand how different parameters of $F$ affect $t(F)$. 
For example, Theorem \ref{thm:max degree} addresses this question for the maximum degree. We wonder how the minimum degree of $F$ affects $t(F)$.
\begin{problem}\label{prob:min degree}
    For $\varepsilon > 0$ and integers $\delta,n \geq 1$, what is the maximum $t = t(\varepsilon,\delta,n)$ such that for every $n$-vertex graph $F$ with maximum degree at most $(1-\varepsilon)n$ and minimum degree at least $\delta$, every 2-coloring of $K_n$ has a copy of $F$ with discrepancy at least $t$?
\end{problem}
\noindent
It seems plausible that $t(\varepsilon,\delta,n)$ grows substantially with $\delta$ (recall that this is not the case if we replace $\delta$ with the average degree $d$ (at least for a large range of $d$); see the paragraph after Theorem \nolinebreak \ref{thm:max degree}). 


It might also be interesting to characterize the 2-colorings of $K_n$ which minimize the discrepancy of $F$-copies, for various choices of $F$. For example, to show that Theorems \ref{thm:max degree} and \ref{thm:d-regular} are tight, we used the random coloring and the complete bipartite graph, respectively (see the paragraphs following these theorems). Are these colorings the only extremal ones for these problems, i.e., are there stability versions of these theorems?

Several prior works \cite{BCL,FHLT:21,GKM_Hamilton,GKM_trees,HLMP} studied discrepancy in the multicolor setting. In this setting, one considers $q$-colorings $q : E(H) \rightarrow [q]$ of a graph $H$ (we will take $H = K_n$), and the discrepancy of a subgraph $F \subseteq E(H)$ is the maximum $t$ such that there exists a color $i \in [q]$ for which the number of edges of $F$ in color $i$ is at least $\frac{e(F)+t}{q}$. (For $q=2$, this coincides with the definition of discrepancy from Section \ref{sec:introduction}.) 
Our proofs of Theorems \ref{thm:max degree} and \ref{thm:d-regular} can be adapted to work for any number $q$ of colors. First, one proves an analogue of Lemma \ref{lem:cut discrepancy} stating that for a coloring $q : E(K_n) \rightarrow [q]$, if there is no copy of $F$ with high discrepancy then for every partition $V(K_n) = X \cup Y$ with $|X| = \lfloor \frac{n}{2} \rfloor$, all colors appear roughly the same number of times in the bipartite graph $(X,Y)$. Then, in all other proofs (e.g., the proof of Lemma \ref{lem:main} or the proof of Theorem \ref{thm:max degree}), one transforms the $q$-coloring into a 2-coloring by identifying colors $2,\dots,q$ into one color, say $\star$. Let $G$ be the graph of edges of color 1. The key point is that in all of these proofs, one finds a copy of $F$ with high discrepancy by finding two copies $F_1,F_2$ of $F$ such that 
$\big| |E(F_1) \cap E(G)| - |E(F_2) \cap E(G)| \big| \geq s$ for some parameter $s$. 
Suppose without loss of generality that 
$|E(F_1) \cap E(G)| \geq |E(F_2) \cap E(G)|$.
Observe that either 
$|E(F_1) \cap E(G)| \geq \frac{e(F)+s}{q}$ (meaning that $F_1$ has discrepancy at least $s$), or 
$|E(F_2) \setminus E(G)| \geq (q-1)\frac{e(F)+s}{q}$. In the latter case, by passing back to the original $q$-coloring, we see that one of the colors $2,\dots,q$ is present on at least $\frac{e(F)+s}{q}$ of the edges of $F_2$, so $F_2$ has high discrepancy, as required. We omit further details. 


\vspace{1cm}

\noindent
{\bf Acknowledgments:} We thank Zach Hunter for useful discussions regarding the proof of Lemma \nolinebreak \ref{lem:sqrt deviation}.

\bibliographystyle{abbrv}
\bibliography{library}

\appendix

\section{Proof of Lemma \ref{lem:hypergeometric}}

We will deduce the lemma from the following statement: For every integer $t$ with 
$$
|pk - t| \leq 0.5\sqrt{p(1-p) \cdot \min(k,n-k)},
$$
it holds that 
\begin{equation}\label{eq:anticoncentration main}
\mathbb{P}[|A \cap P| = t] \geq 
0.14 \cdot \sqrt{\frac{\eta}{p(1-p)k}}.
\end{equation}
To deduce the lemma from \eqref{eq:anticoncentration main}, note that $n - k \geq \eta k$, and so \eqref{eq:anticoncentration main} holds whenever $t = pk + s$ for $|s| \leq 0.5\sqrt{p (1-p) \eta k}$. Using this for all 
$0.2 \sqrt{p (1-p) \eta k} \leq s \leq 0.5\sqrt{p (1-p) \eta k}$, 
we get that 
$$
\mathbb{P}\big[ |A \cap P| \geq pk + 0.2 \sqrt{p (1-p) \eta k} \big]
\geq 0.3 \sqrt{p (1-p) \eta k} \cdot 0.14 \cdot \sqrt{\frac{\eta}{p(1-p)k}} \geq 
0.04 \eta.
$$
As $p(1-p) \geq \eta(1-\eta) \geq 0.5\eta$, the above is also a lower bound on the probability of having 
$|A \cap P| \geq pk + 0.1\eta\sqrt{k}$.
The bound for 
$\mathbb{P}\big[ |A \cap P| \leq pk - 0.1\eta\sqrt{k} \big]$
is obtained the same way. 

Now we prove \eqref{eq:anticoncentration main}. 
\begin{equation}\label{eq:anticoncentration 0}
    \mathbb{P}[|A \cap P| = t] = \frac{\binom{pn}{t}\binom{(1-p)n}{k-t}}{\binom{n}{k}} 
\end{equation}
We use Stirling's approximation to estimate the above binomial coefficients (using that $n \geq k \gg 1/\eta$, $n-k \geq \eta n$ and $t = pk + O(\sqrt{k}$).
\begin{align}\label{eq:anticoncentration 1}
\binom{pn}{t} &= \nonumber \frac{(pn)!}{t! (pn-t)!} = 
\frac{1 + o_k(1)}{\sqrt{2\pi}} \cdot 
\sqrt{\frac{pn}{t(pn-t)}} \cdot 
\frac{(\frac{pn}{e})^{pn}}{(\frac{t}{e})^t \cdot (\frac{pn-t}{e})^{pn-t}} 
\\ &\geq
\frac{1 + o_k(1)}{\sqrt{2\pi}} \cdot \sqrt{\frac{pn}{t(pn-t)}} \cdot 
\frac{(pn)^{pn}}{t^t (pn-t)^{pn-t}}.
\end{align}
Write $t = pk + s$, so that $|s| \leq \sqrt{0.25\eta(1-\eta)\cdot \min(k,n-k)}$ by assumption. We have
$$
\frac{pn}{t(pn-t)} = \frac{pn}{(pk+s)(p(n-k)-s)} = 
\frac{1}{pk + o(k)}
$$
and
\begin{align}\label{eq:anticoncentration 1.1}
\frac{(pn)^{pn}}{t^t (pn-t)^{pn-t}} \nonumber &= 
\frac{(pn)^{pn}}{(pk+s)^{pk+s} (p(n-k)-s)^{p(n-k)-s}} 
\\ &= 
\frac{(pn)^{pn}}{(pk)^{pk}(p(n-k))^{p(n-k)}} \cdot 
\left( \frac{p(n-k)-s}{pk+s}\right)^{s} \cdot \left( 1 + \frac{s}{pk} \right)^{-pk} \cdot \left( 1 - \frac{s}{p(n-k)} \right)^{-p(n-k)}
\end{align}
In the first term above, we may divide through by $p^{pn}$. Also, using the inequality $1+x \leq e^x$ (which holds for all $x \in \mathbb{R}$), we can bound the third and fourth terms as follows:
$$
\left(1 + \frac{s}{pk} \right)^{pk} \leq e^{s} \; \; \; \text{and} \; \; \; 
\left( 1 - \frac{s}{p(n-k)} \right)^{p(n-k)} \leq e^{-s},
$$
and hence,
$$
\left( 1 + \frac{s}{pk} \right)^{-pk} \cdot \left( 1 - \frac{s}{p(n-k)} \right)^{-p(n-k)} \geq 
 e^{-s} \cdot e^s = 1.
$$
Therefore, \eqref{eq:anticoncentration 1.1} is at least
$$
\frac{n^{pn}}{k^{pk}(n-k)^{p(n-k)}} \cdot 
\left( \frac{p(n-k)-s}{pk+s}\right)^{s}.
$$
Plugging this into \eqref{eq:anticoncentration 1}, we get
\begin{equation}\label{eq:anticoncentration 1 final}
\binom{pn}{t} \geq \frac{1 + o_k(1)}{\sqrt{2\pi}} \cdot \sqrt{\frac{1}{pk+o(k)}} \cdot \frac{n^{pn}}{k^{pk}(n-k)^{p(n-k)}} \cdot 
\left( \frac{p(n-k)-s}{pk+s}\right)^{s}.
\end{equation}
By symmetry with respect to replacing $p$ with $1-p$ and $t$ with $k-t = (1-p)k - s$ (so $s$ is replaced with $-s$), we similarly have
\begin{align}\label{eq:anticoncentration 2 final}
    \binom{(1-p)n}{k-t} \geq 
    \frac{1 + o_k(1)}{\sqrt{2\pi}} \cdot \sqrt{\frac{1}{(1-p)k + o(k)}} \cdot  
    \frac{n^{(1-p)n}}{k^{(1-p)k}(n-k)^{(1-p)(n-k)}}  
    \cdot 
    \left( \frac{(1-p)(n-k)+s}{(1-p)k-s} \right)^{-s}.
\end{align}
Note that
\begin{equation}\label{eq:anticoncentration 3}
    \left( \frac{p(n-k)-s}{pk+s}\right)^{s} \cdot 
    \left( \frac{(1-p)(n-k)+s}{(1-p)k-s} \right)^{-s} = 
    \left( \frac{(1-p)k - s}{pk+s} \right)^s \cdot 
    \left( \frac{p(n-k) - s}{(1-p)(n-k) + s} \right)^s.
\end{equation}
We now use the inequality $1+x \geq e^{x - x^2}$, which holds for all $x$ with $|x| \leq 0.5$. We have
\begin{align*}
    \left( \frac{(1-p)k - s}{pk+s} \right)^s &= 
    \left( \frac{1-p}{p} \right)^s \left( 1 - \frac{s}{(1-p)(pk+s)} \right)^s
    \\ &\geq
    \left( \frac{1-p}{p} \right)^s \cdot \exp\left( -\frac{s^2}{(1-p)(pk+s)} - \frac{s^3}{(1-p)^2(pk+s)^2} \right)
    \\ &\geq 
    \left( \frac{1-p}{p} \right)^s \cdot e^{-1/2}.
\end{align*}
Here, the last inequality uses that 
$|s| \leq 0.5\sqrt{p(1-p)k}$, $\eta \leq p \leq 1-\eta$, and $k$ is large enough. 
Similarly,
\begin{align*}
    \left( \frac{p(n-k) - s}{(1-p)(n-k) + s} \right)^s &= 
    \left( \frac{p}{1-p} \right)^s \cdot \left( 1 - \frac{s}{p((1-p)(n-k) - s)} \right)^s
    \\ &\geq
    \left( \frac{p}{1-p} \right)^s \cdot 
    \exp\left( - \frac{s^2}{p((1-p)(n-k)-s)} - \frac{s^3}{p^2((1-p)(n-k)-s)^2}\right)
    \\ &\geq 
    \left( \frac{p}{1-p} \right)^s \cdot e^{-1/2}.
\end{align*}
Here we used that $|s| \leq 0.5\sqrt{p(1-p)(n-k)}$.
Combining the above with 
\eqref{eq:anticoncentration 1 final}, \eqref{eq:anticoncentration 2 final} and \eqref{eq:anticoncentration 3}, we get
$$
\binom{pn}{t} \cdot \binom{(1-p)n}{k-t} \geq \frac{1 + o_k(1)}{2\pi} \cdot \sqrt{\frac{1}{p(1-p)k^2 + o(k^2)}} \cdot \frac{n^n}{k^k(n-k)^{n-k}} \cdot e^{-1}.
$$
Finally, we estimate $\binom{n}{k}$, as follows.
$$
\binom{n}{k} = \frac{n!}{k!(n-k)!} \leq 
\frac{1+o_k(1)}{\sqrt{2\pi}} \cdot \sqrt{\frac{n}{k(n-k)}} \cdot \frac{(\frac{n}{e})^n}{(\frac{k}{e})^k(\frac{n-k}{e})^{n-k}}.
$$
Plugging the above into \eqref{eq:anticoncentration 0}, we get
$$
\mathbb{P}[|A \cap P| = t] \geq \frac{e^{-1} + o_k(1)}{\sqrt{2\pi}} \cdot \sqrt{\frac{k(n-k)}{\left( p(1-p)k^2 + o(k^2) \right) n}} \geq 
0.14 \cdot \sqrt{\frac{\eta}{p(1-p)k}}.
$$
Here we used that $k \leq (1-\eta)n$.
\end{document}